\documentclass[journal]{IEEEtran}

\usepackage{cite}
\usepackage[pdftex]{graphicx}
\usepackage[cmex10]{amsmath}
\interdisplaylinepenalty=2500
\usepackage{algorithmicx}
\usepackage{algpseudocode}
\usepackage{array}
\usepackage{fixltx2e}
\usepackage{url}
\usepackage{multirow}
\usepackage{bigstrut}
\usepackage{colortbl}
\usepackage{amsmath}
\usepackage{amsthm}
\usepackage{amssymb}
\usepackage{amsfonts}
\usepackage{ulem}

\newenvironment{nproof}{{\noindent \it Proof:}}{\hfill $\square$}

\newtheorem{problem}{Problem}

\newtheorem{theorem}{Theorem}
\newtheorem{corollary}{Corollary}
\newtheorem{property}{Property}
\newtheorem{proposition}{Proposition}
\newtheorem*{propositionA-1}{Proposition A-1}
\newtheorem*{propositionA-2}{Proposition A-2}
%

\hyphenation{op-tical net-works semi-conduc-tor}

\begin{document}
%
\title{A Study on the Block Relocation Problem: Lower Bound Derivations and Strong Formulations}
%
%
%

\author{Chao~Lu,
Bo~Zeng,~\IEEEmembership{Member,~IEEE,}
and~Shixin~Liu
\thanks{This work was supported by the China Scholarship Council scholarship,
supported in part by National Key R\&D Program of China under Grant No. 2017YFB0306400,
National Natural Science Foundation of China under Grant No. 61573089, 61703220, 71871105,
Shandong Provincial Natural Science Foundation,
China under Grant No ZR2016FP02,
Postdoctoral Science Foundation Project of China under Grant No 2017M610407
and Qingdao Postdoctoral Research Project under Grant No 2016027.}
\thanks{C. Lu and S. Liu are with the State Key Laboratory of Synthetical Automation for Process Industries,
as well as the College of Information Science and Engineering,
Northeastern University, Shenyang 110819, China\ \ (e-mail: surpassu@live.com, sxliu@mail.neu.edu.cn).}
\thanks{B. Zeng is with the Department of Industrial Engineering and the Department of Electrical and Computer Engineering,
Swanson School of Engineering, University of Pittsburgh, Pittsburgh, PA 15261, USA (email: bzeng@pitt.edu).}
%
}

\maketitle


\begin{abstract}

The \textit{block relocation problem} (BRP) is a fundamental operational issue in modern warehouse and yard management, which, however, is very challenging to solve.  In this paper, to advance our understanding on this problem and to provide a substantial assistance to practice, we $(i)$ introduce a classification scheme and present a rather comprehensive review on all 16 BRP variants; $(ii)$ develop a general framework to derive lower bounds on the number of necessary relocations and demonstrate its connection to existing ones on the unrestricted BRP variants; $(iii)$ propose and employ a couple of new critical substructures concepts to analyze the BRP and obtain a lower bound that dominates  all existing ones; $(iv)$ build a new and strong mixed integer programming (MIP) formulation that is adaptable to compute 8 BRP variants, and design a novel MIP formulation based iterative procedure to compute exact BRP solutions; $(v)$ extend the MIP formulation to address four typical industrial considerations. Computational results on standard test instances show that the new lower bound is significantly stronger, and our new MIP computational methods have superior performances over a state-of-the-art formulation.
\end{abstract}

\IEEEpeerreviewmaketitle

\section{Introduction}
%
%
%
%

\IEEEPARstart{T}{he} block relocation problem (BRP) is a fundamental operational issue in modern warehouse and yard management, especially for material handling in a container yard or a steel slab yard.
For example, in a container yard, heavy and large containers, i.e., blocks in this context, are stored temporarily in stacks (i.e., columns) as in Figure \ref{figure: container_yard}.
Before those containers can be shipped to different destinations, they will be retrieved according to the \textit{prioritized retrieval list}.
One prioritized list is illustrated by the numbers on containers in Figure \ref{figure: container_yard}, where the smaller number the higher priority.
Clearly, if a container of a lower priority is piled on top of another one with a higher priority, e.g., container 13 is on top of container 12, retrieving the latter one can only be done after moving the former one to somewhere else (typically to another stack).
Moving a container (or block in generally) from a stack to another one is often referred to as a \textit{relocation}~\cite{Kim-2006-COR}.
In practice, as containers, steel slabs and other blocks are large and heavy, moving them needs powerful and expensive handling equipment, and the associated operations are time and energy consuming.
Hence, to retrieve blocks from the yard following their retrieval priorities, an essential issue is to determine a move sequence to complete the task with the least number of relocations,  which is referred to as the aforementioned BRP \cite{Caserta-2011-ORS} or \textit{container relocation problem} \cite{Forster-2012-COR} if specified to containers.

With the rapid automation of warehouse and yard operations, the  BRP and its different variants have received a lot of attention from engineers and scholars, and many studies have been published in the literature after its formal introduction~\cite{Kim-2006-COR} in 2006.
For example, many well-defined mixed integer programming (MIP) formulations and sophisticated exact or heuristic algorithms have been designed and analyzed (e.g., \cite{Carlo-2014-EJOR, Lehnfeld-2014-EJOR} and references therein).
Nevertheless, the BRP has been proven to be NP-hard and is computationally very challenging for practical-scale instances  \cite{Caserta-2012-EJOR}. According to our numerical study, a state-of-the-art formulation may take hours to derive a feasible relocation plan for a rather small-scale instance. Certainly, such a computational performance does not ensure its application in practice.
With little quantitative support,  the current practice is often based on operators' experience or following fixed relocation rules, leading to many unnecessary relocations and a heavy operational burden.

To change such a situation, especially to provide a substantial assistance to practice, we address in this paper two critical issues of the BRP, i.e.,
a stronger lower bound on the number of necessary relocations,
and a computationally more effective mathematical formulation.
Indeed, we develop a general framework to  understand the number of necessary relocations, which interprets all known lower bounds on that number and leads us to derive a much stronger lower bound.
{\color{black} Moreover, a deep insight from our new formulation inspires us to develop a novel iterative computational procedure.
Also, its flexibility and strength are demonstrated by  extending it to address four additional industrial considerations.}
Overall, we mention that our new results either theoretically dominate the state-of-the-art in the literature or drastically outperform existing formulations.

\begin{figure}[!t]
\begin{center}
\includegraphics[width=0.89\linewidth]{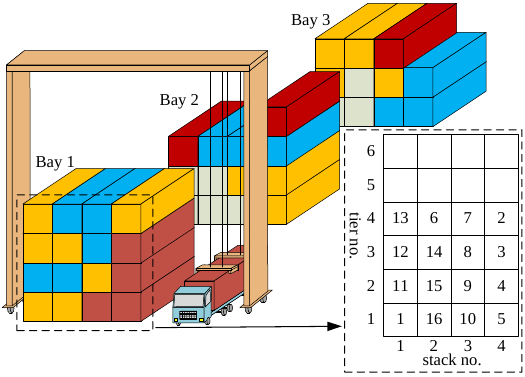}
\caption{Containers piled in stacks with their retrieval priorities}
\label{figure: container_yard}
\end{center}
\end{figure}

The remaining part of this paper is organized as follows.
In Section \ref{section_literature_review}, we classify variants of the BRP into a few classifications and review  existing literature.
In Section \ref{section_LB}, we  present a general framework to derive lower bounds on the number of necessary relocations, demonstrate its connection to existing ones, and apply it to analyze critical structures and obtain a stronger lower bound.
In Section \ref{section_formulation}, we build an MIP formulation for the BRP.
Also, a novel MIP formulation based iterative procedure is developed to compute exact BRP solutions.
{\color{black}
We extend the MIP formulation to address additional industrial considerations in Section \ref{section_customizations}.
}
Performances of our lower bound and computational methods are reported in Section~\ref{section_experiments}.
Section \ref{section_conclusion} concludes this paper with a discussion on future research.

\section{Literature Review}
\label{section_literature_review}
Up to now, many solid studies have been published on different variants of the BRP.
In this section, we classify existing publications according to their {\color{black}nature}, review those publications, and describe their significant contributions.

\subsection{Problem Classification}

According to physical restrictions,
retrieval specifications, and the capacity of handling equipment,
there exist four major features in defining and formulating a BRP model.
They are $(\romannumeral1)$ what restrictions are imposed on moves,
$(\romannumeral2)$ whether retrieval priorities of blocks are distinct,
$(\romannumeral3)$ whether all the blocks are to be retrieved,
and $(\romannumeral4)$ whether only one block can be moved at a time.
We explain them  in the following.

$(\romannumeral1)$ \textit{Restricted vs. Unrestricted}:
In practice, the first block in the \textit{current} prioritized retrieval list  is often referred to as the \textit{target block}.
As mentioned, the target block can only be retrieved after blocks piled above it are relocated to other stacks.
Those necessary relocations are called \textit{forced moves}~\cite{Lehnfeld-2014-EJOR}.
If only forced moves are allowed to retrieve every block in the list, the BRP is {\color{black}called} the restricted one. Otherwise, it is called the unrestricted one.

$(\romannumeral2)$ \textit{Distinct vs. Duplicate}:
If each block is of a distinct priority, the BRP is called the BRP with distinct priorities.
Otherwise, it is called the BRP with duplicate priorities.

$(\romannumeral3)$ \textit{Complete vs. Incomplete}:
If all blocks in one row (i.e., one bay) are to be retrieved, the problem is called the BRP with the complete retrieval.
Otherwise, it is called the BRP with the incomplete retrieval.

$(\romannumeral4)$ \textit{Individual vs. Batch}:
If the handling equipment (e.g., a crane or straddle
carrier) can move exactly one block at a time, the BRP is called the BRP with individual moves.  Otherwise, it is called the BRP with batch moves \cite{Zhang-2016-FSMJ}. The latter one is often seen in steel industry.

\begin{table}[!t]
\centering
\caption{Classification of Existing Literature on the BRP by the Newly Proposed Four-Characteristic Scheme}
\label{table: classification-of-Procedure}
\renewcommand\tabcolsep{3pt}
\begin{tabular}{ccp{1.1in}<{\centering}p{1.4in}<{\centering}}
\hline
\multicolumn{1}{c}{var} & \multicolumn{1}{c}{characteristics} & \multicolumn{1}{c}{literature} & \multicolumn{1}{c}{years (20yy)} \bigstrut\\
\hline
\rowcolor[rgb]{ .867,  .867,  .867}1    & res\,$|$dis\,\,$|$com$|$ind
& \cite{Kim-2006-COR, Caserta-2009-Conference, Wan-2009-NRL, Wu-2010-Conference, Zhang-2010-Conference, Caserta-2012-EJOR, Zhu-2012-IEEET-ASE, Jovanovic-2014-CIE, Zehendner-2014-IJPR, Borjian-2015-arXiv, Zehendner-2015-EJOR, Exposito-2015-ESA, Tang-2015-IIET, Ku-2016-COR, Tanaka-2016-IEEET-ASE, Galle-2016-ORL, Quispe-2018-COR, Silva-2018-EJOR, Galle-2018-EJOR}
& 06, 09, 10, 12, 14, 15, 16, 18\bigstrut[t]\\
2    & \ \ \ \ \,dis\,\,$|$com$|$bat & not found\\
3    & \ \ \ \ \,dis\,\,$|$inc\,\,\,$|$ind
& \cite{Caserta-2011-ORS, Exposito-2014-AEI}
& 11, 14\\
4    & \ \ \ \ \,dis\,\,$|$inc\,\,\,$|$bat & not found\\
5    & \ \ \ \ \,dup$|$com$|$ind
& \cite{Kim-2006-COR, Tanaka-2016-IEEET-ASE, Silva-2018-EJOR}
& 06, 16, 18\\
6    & \ \ \ \ \,dup$|$com$|$bat & not found\\
7    & \ \ \ \ \,dup$|$inc\,\,\,$|$ind & not found\\
8    & \ \ \ \ \,dup$|$inc\,\,\,$|$bat & not found\\
\rowcolor[rgb]{ .867,  .867,  .867} 9    & unr$|$dis\,\,$|$com$|$ind
& \cite{Caserta-2012-EJOR, Zhu-2012-IEEET-ASE, Petering-2013-EJOR, Tanaka-2015-Conference, Tanaka-2015-Conference2, Tanaka-2018-COR, Tricoire-2018-COR, Silva-2018-EJOR, Feillet-2019-COR}
& 12, 13, 15, 18, {\color{black}19}\\
10   & \ \ \ \ \,dis\,\,$|$com$|$bat & \cite{Zhang-2016-FSMJ}
& 16\\
11   & \ \ \ \ \,dis\,\,$|$inc\,\,\,$|$ind &  \cite{Exposito-2014-AEI}
& 14\\
12   & \ \ \ \ \,dis\,\,$|$inc\,\,\,$|$bat & not found\\
13   & \ \ \ \ \,dup$|$com$|$ind & \cite{Forster-2012-COR, Jin-2013-Conference, Jin-2015-EJOR, Silva-2018-EJOR}
& 12, 13, 15, 18\\
14   & \ \ \ \ \,dup$|$com$|$bat & not found\\
15   & \ \ \ \ \,dup$|$inc\,\,\,$|$ind & not found\\
16   & \ \ \ \ \,dup$|$inc\,\,\,$|$bat & not found \bigstrut[b]\\
\hline
\end{tabular}\\
\ \\ \leftline{var = variation; res = restricted, unr = unrestricted;}
\leftline{dis = distinct priorities, dup = duplicate priorities;}
\leftline{com = complete retrieval, inc = incomplete retrieval;}
\leftline{ind = individual moves, bat = batch moves.}
\end{table}%

Clearly,  there are $2^4=16$ different variants based on particular specifications on those four features.
Accordingly, we group subjects of existing publications as in Table \ref{table: classification-of-Procedure} after performing a rather exhaustive review.
We mention that studies on the BRP problems with other extensions, e.g., those with stochastic factors or vehicle routing decisions, are not included.
Among those in Table \ref{table: classification-of-Procedure}, variants 1 and 9 are most popular, i.e., the restricted BRP and the unrestricted BRP with distinct priorities, the complete retrieval and individual moves.
The reason behind {\color{black}it} is that they have the fundamental structures that do not depend on particular working conditions or facilities.
Moreover, we can argue that other variants are relaxations of them.
For example, we can convert an instance of variant 15, i.e., the unrestricted BRP with duplicate priorities, the incomplete retrieval and individual moves, to an instance of variant 9 by assigning distinct priorities to blocks of the same priority and considering no-to-retrieve blocks with the lowest priorities.
\textcolor{black} {Certainly, solving the latter instance does not necessarily ensure an optimal solution of the former one.}
However, any feasible solution to the latter instance is always feasible to the former one, if retrieval moves of no-to-retrieve blocks are ignored.
Hence, computing variant 1 or 9 provides a basic strategy to handle more involved variants.

Although many research efforts have been devoted to the BRP variants, as noted in the following reviews, existing results might not be able to efficiently deal with their practical instances, which, therefore, inspires us to perform a study to gain a deeper understanding and to develop efficient solution methods.

\subsection{Literature on Restricted BRP Variants}

We first give a review on existing studies on variant 1, which is the default BRP in this subsection, and then describe relevant work on variants 3 and 5.

1) \textit{Theoretical analysis}.
To the best of our knowledge, the study in \cite{Kim-2006-COR} is the first analytical one in the literature.
As the number of relocations is the primary concern of the BRP, they give a lower bound on this number.
Since then, this lower bound has been successively improved by different scholars \cite{Zhang-2010-Conference, Zhu-2012-IEEET-ASE, Tanaka-2016-IEEET-ASE, Quispe-2018-COR}.
Although the lower bound of \cite{Kim-2006-COR} is rather weak, Galle et al.~\cite{Galle-2016-ORL} show that the expected minimum number of relocations approaches to it if the number of stacks grows to infinite and the priorities of blocks are uniformly distributed.
Additionally, some upper bound estimations on that number have also been developed \cite{Caserta-2012-EJOR, Zehendner-2014-IJPR, Zehendner-2015-EJOR}.

2) \textit{Exact tree search algorithms}.
To directly solve the BRP problems, we note that there are three main types of tree search based exact algorithms,
\textcolor{black} {which are the fastest exact algorithms up to now.}
They are branch-and-bound (B\&B) algorithms, A* based algorithms, and other tree search algorithms.
Simple B\&B algorithms are developed by Kim and Hong \cite{Kim-2006-COR} and Wu and Ting \cite{Wu-2010-Conference}.
More sophisticated B\&B algorithms are developed by Exp{\'o}sito-Izquierdo et al. \cite{Exposito-2015-ESA} and Tanaka and Takii \cite{Tanaka-2016-IEEET-ASE}.
Along with B\&B algorithms, Zhang et al. \cite{Zhang-2010-Conference} propose iterative deepening A* algorithms (IDA) that take advantages of two new lower bounds and several probe heuristics \cite{Zhu-2012-IEEET-ASE}.
Since then, a couple of more A* algorithms have been introduced, including \cite{Borjian-2015-arXiv} where an A* algorithm makes use of existing lower bounds and an existing upper bound, and \cite{Quispe-2018-COR} where new lower bounds and several existing lower bounds are integrated for a better performance.
Finally, we note that Ku and Arthanari \cite{Ku-2016-COR} design a bidirectional search algorithm, which incorporates a search space reduction technique, called the abstraction method, within a tree search algorithm.

3) \textit{Mathematical programming formulations}.
\textcolor{black} {In the literature, the BRP is often formulated into MIPs that can be computed by state-of-the-art solvers or packages. Note that those solvers or packages, different from the aforementioned particular algorithms, compute general mathematical programs, which allow users to flexibly augment basic MIP models with additional considerations and concerns arising from practice.}%
To the best of our knowledge, Wan et al. \cite{Wan-2009-NRL} develop the first binary formulation, which is then improved by Tang et al. \cite{Tang-2015-IIET} with a significantly better computational performance.
Caserta et al. \cite{Caserta-2012-EJOR} present another binary formulation called BRP-\uppercase\expandafter{\romannumeral2}.
Later, it is improved in~\cite{Exposito-2015-ESA} by replacing some constraints, and is enhanced in \cite{Zehendner-2015-EJOR} by removing superfluous variables, tightening some constraints, and applying a pre-processing step to fix several variables.
We mention that a relocation sequence based reformulation is proposed in Zehendner and Feillet \cite{Zehendner-2014-IJPR} to support a column generation algorithm for the BRP.
Additionally, a couple of stronger binary formulations are proposed very recently by Galle et al. \cite{Galle-2018-EJOR} and da Silva et al. \cite{Silva-2018-EJOR}.

4) \textit{Heuristic solution procedures}.
Because of the complexity of the BRP, most of the existing heuristics are ruled-based heuristics \cite{Kim-2006-COR, Caserta-2012-EJOR, Tang-2015-IIET} and look ahead heuristics \cite{Caserta-2009-Conference, Wu-2010-Conference}.
There are also some MIP based heuristics \cite{Wan-2009-NRL}, a beam search heuristic \cite{Wu-2010-Conference} and a fast chain heuristic \cite{Jovanovic-2014-CIE} .

5) \textit{Relevant research on variants 3 and 5}.
Research on variants 3 and 5 is rather {\color{black}scarce}.
For variant 3, Caserta et al.~\cite{Caserta-2011-ORS} develop a dynamic programming algorithm, and a heuristic method, i.e., a customized corridor method, that adopts the dynamic programming algorithm as a subroutine to achieve a stronger solution capability.
Also, Exp\'{o}sito-Izquierdo et al.~\cite{Exposito-2014-AEI} design a fast knowledge-based heuristic algorithm and two exact A* search algorithms for variant 3.
In addition to their focuses on variant 1, papers~\cite{Kim-2006-COR},  \cite{Tanaka-2016-IEEET-ASE}, and \cite{Silva-2018-EJOR} present some analysis on variant 5.

\subsection{Literature on Unrestricted BRP Variants}
Similar to our review on the restricted BRP, we first focus on existing studies on variant 9, which is the default BRP in this subsection, and then describe
relevant work on variants 10, 11 and 13.

1) \textit{Theoretical analysis}.
We mention that the lower bound on the number of necessary relocations by Kim and Hong~\cite{Kim-2006-COR}, which is originally developed for variants 1 and 5, is also applicable to variant 9, and has been considered as the basis for further improvements.
Forster and Bortfeldt \cite{Forster-2012-COR} propose a stronger lower bound for variant 13, which is also applicable to variant 9.
Recently, two new stronger lower bounds are proposed by Tanaka and Mizuno \cite{Tanaka-2018-COR} and Tricoire et al. \cite{Tricoire-2018-COR}. Regarding the upper bound, Caserta et al. \cite{Caserta-2012-EJOR} propose a closed-form upper bound.
In addition to the lower bound, Tanaka and Mizuno \cite{Tanaka-2015-Conference} propose two dominance properties associated with optimal solutions to reduce solution space. Their result is further complemented by two new dominance properties presented in  Tanaka \cite{Tanaka-2015-Conference2}.

2) \textit{Exact tree search algorithms}.
In addition to their focus on the restricted BRP, Zhu et al. \cite{Zhu-2012-IEEET-ASE} also develop IDA algorithms for the unrestricted BRP.
By using their derived dominance properties, Tanaka and Mizuno \cite{Tanaka-2015-Conference} and  \cite{Tanaka-2015-Conference2} develop some strengthened B\&B algorithms  in the search tree.
Together with a new lower bound, the B\&B algorithms are further improved in Tanaka and Mizuno \cite{Tanaka-2018-COR}. %
A recent B\&B algorithm for the BRP is developed by Tricoire et al. \cite{Tricoire-2018-COR}, which incorporates fast heuristics and another new lower bound.
\textcolor{black} {Those algorithms are again the fastest exact algorithms for this type of BRP.}

3) \textit{Mathematical programming formulations}.
Caserta et al. \cite{Caserta-2012-EJOR} develop the first binary integer program for the unrestricted BRP, which is referred to as  BRP-\uppercase\expandafter{\romannumeral1}. Note that, it could not provide a satisfactory performance even on small scale instances. Petering and Hussein \cite{Petering-2013-EJOR} present a more compact MIP formulation, which is called BRP-\uppercase\expandafter{\romannumeral3}.
Compared to BRP-\uppercase\expandafter{\romannumeral1}, BRP-\uppercase\expandafter{\romannumeral3} has much fewer integer decision variables, and demonstrates a faster computational performance. However, it can only solve 69 out of 520 benchmark instances as shown in \cite{Silva-2018-EJOR}.
Recently, da Silva et al. \cite{Silva-2018-EJOR} propose two new binary formulations, referred to as BRP-m1 and BRP-m2 respectively, both of which demonstrate significantly better computational performances over  BRP-\uppercase\expandafter{\romannumeral3}. Between them, BRP-m2 is a little bit more efficient as it can solve 181 benchmark instances while BRP-m1 can solve 154 instances.

4) \textit{Heuristic solution procedures}.
As for fast heuristics for the unrestricted BRP, Caserta et al. \cite{Caserta-2012-EJOR} propose a simple rule-based heuristic.
Petering and Hussein \cite{Petering-2013-EJOR} extend the heuristic and develop a look-ahead heuristic. %
Tricoire et al. \cite{Tricoire-2018-COR} develop four fast heuristics and a pilot method which incorporates a fast metaheuristic called rake search.
{\color{black} Feillet et al. \cite{Feillet-2019-COR} develop a local-search based heuristic, where the state space is explored by a dynamic programming algorithm.}

5) \textit{Relevant research on variants 10, 11 and 13}.
Regarding other variants, Zhang et al. \cite{Zhang-2016-FSMJ} propose a lower bound for variant 10,
and develop both  inexact and exact tree search algorithms.
Exp{\'o}sito-Izquierdo et al. \cite{Exposito-2014-AEI} develop a simple domain-specific knowledge-based heuristic for variant 11, which aims to minimize the probability that a relocated block requires new relocations in the future.
They also develop an exact A*-based search algorithm which embeds that heuristic.
For variant 13, Forster and Bortfeldt \cite{Forster-2012-COR} develop a heuristic tree search algorithm that includes a suitable branching procedure using move sequences of promising single moves. Similarly, Jin et al. \cite{Jin-2013-Conference, Jin-2015-EJOR} develop tree search based look-ahead heuristics. da Silva et al. \cite{Silva-2018-EJOR}, in addition to their focus on variant 9, also give formulations for variant 13.\\

Overall, we note in the literature that current studies on the unrestricted BRP is insufficient, and it still remains as a challenging problem.
For example, existing research on analyzing lower bounds on the number of relocations is developed rather from individual structures, with little insight to establish a systematic strategy.
Also, most benchmark instances in \cite{Silva-2018-EJOR} cannot be solved in a reasonable time using the state-of-the-art formulation.
To change such a situation of the unrestricted BRP, in this paper, we perform a study on developing a general framework to understand lower bound derivations, demonstrating its application to obtain a stronger lower bound, and constructing a computationally friendly MIP formulation, as well as an MIP formulation based exact algorithm for a faster computation.

{\color{black}
Besides the four features defining BRP variants, a practical system often has concerns or requirements due to its particular situation and environment. For example, when containers have a great variety in their weights, heavy containers should not be piled on top of light containers  \cite{Bruns-2016-EJOR}. Also, retrieval operations of steel slabs should be well paced to ensure a smooth production in the next stage \cite{Tang-2010-COR}. To illustrate its advantages in flexibility  and  general applicability in practice, our basic MIP model is modified or augmented to accommodate several practical considerations.}

\section{Derivations of Lower Bounds on the Number of Relocations}
\label{section_LB}

Different from existing studies on deriving particular lower bounds on the number of necessary relocations, we present a completely new framework to estimate that number systematically.
It reveals fundamental connections among existing lower bounds.
Then, we identify a few new results that strengthen traditional understandings.
Finally, under the proposed framework, we obtain a new lower bound that dominates all existing ones.
We believe that the overall derivation is novel, and will substantially advance our understanding on the BRP.


\subsection{A General Framework for the Derivation of Lower Bounds}


We first introduce several well-established concepts that are critical to have a deep appreciation of the BRP.

Consider one bay with $B$ blocks piled on $S$ stacks.
Let ${\color{black}\mathbb{B}}=\{1, 2, ..., B\}$ be the set of blocks, noting that a smaller ID has a higher priority, and ${\color{black}\mathbb{S}}=\{1, 2, ..., S\}$ be the set of stacks.
Also,  we denote the overall organization of those blocks, i.e., their positions in stacks, by $\mathbb{C}$. %
For a given $\mathbb{C}$, block $i$ is  called a  \textit{badly placed (BP) block} if it is piled  above some  block(s) that should be retrieved before it, i.e., $b$'s priority is lower than those of blocks below it.  %
Otherwise, $b$ is a \textit{well placed (WP) block}~\cite{Forster-2012-COR}.
Clearly, BP blocks are the causes of relocations.
For the instances displayed in Figure \ref{figure: two_instances} (a) and (b),
the blocks with priority numbers in bold and underlined are BP blocks,
and other blocks are WP blocks.

Next, we define types of block moves. Since block retrieval moves, which are mixed with relocation moves in the move sequence, are not our concern,  we only define different relocation moves. According to \cite{Forster-2012-COR}, there are four types of moves. A BB (i.e., Bad-Bad) move is a move relocating a BP block to a stack and after which the block is again a BP block.
A BG (i.e., Bad-Good) move is a move relocating a BP block to a stack and after which the block becomes a WP block. Two other moves, i.e., GB move and GG move, are defined likewise.
In Figure \ref{figure: two_instances} (a),
the move  relocating block 5 to stack 1 is a BB move,
the move relocating block 5 to stack 2 is a BG move,
the move relocating block 4 to stack 3 is a GB move,
and  the move relocating block 4 to stack 2 is a GG move.
It is straightforward that a BP block cannot be retrieved if no BG move is implemented on it.

Extending from individual blocks, we introduce the concept of \textit{the priority of stack $s$}, which is the highest priority of a block piled in stack $s$ if it is not empty, and $+\infty$ (i.e., the lowest priority) otherwise.
For the instance displayed in Figure \ref{figure: two_instances} (a),
the priorities of the four stacks are respectively 4, 13, 2 and 1 from the left to the right.
Obviously, the priority of a stack will be lower or remain the same if some block(s) is removed from it.
Another important concept is \textit{the top $k^{th}$ layer},
which consists of the top $k^{th}$ block of each stack when every stack has at least $k$ blocks.
For the instances displayed in Figure \ref{figure: two_instances} (a) and (b),
the top $2^{nd}$ layers consist of blocks $\{11, 14, 7, 1\}$ and blocks $\{6, 14, 5, 4\}$ respectively.
Similarly, we define the top $k$ layers that include all blocks from the top $1^{st}$ to the top $k^{th}$ layers.
For the instances displayed in Figure \ref{figure: two_instances} (a) and (b),
the top 2 layers include blocks $\{4, 13, 6, 5; 11, 14, 7, 1\}$ and blocks $\{16, 17, 18, 19; 6, 14, 5, 4\}$ respectively.

In the following, we present a few critical properties that are actually behind all derivations of lower bounds on the number of relocations in the BRP.
Those properties render a general framework for us to analyze lower bound derivations in a systematical fashion.
Specifically, let $f(\mathtt{B})$ be the function that returns the least number of relocations implemented on block subset $\mathtt{B}\subseteq \mathbb{B}$ across all feasible move sequences that complete the retrieval task of the given initial configuration $\mathbb{C}$.
Moreover, function $f^{\text{mt}}$ with $\text{mt}\in \{\text{BB, BG, GB, GG}\}$ returns the least number of relocations of each particular move type across all feasible move sequences.
Similarly, $f^{\overline{\text{BG}}}$ returns that least number of all non-BG moves.
\begin{theorem}
\label{thm_LB_overall}
The following inequalities hold.
\begin{align}
f(\mathbb{B}) \geq  & f^{\text{BG}}(\mathbb{B}) + f^{\overline{\text{BG}}}(\mathbb{B}) \notag\\
\geq  & f^{\text{BG}}(\mathbb{B}) + f^{\text{BB}}(\mathbb{B})+f^{\text{GB}}(\mathbb{B})+f^{\text{GG}}(\mathbb{B}) \notag
\end{align}
\end{theorem}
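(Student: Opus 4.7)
The plan is to exploit the fact that for any single fixed feasible move sequence $\sigma$, the total relocation count decomposes additively into the counts of its constituent move types (BB, BG, GB, GG), and each individual type-count in $\sigma$ is, by definition, at least the minimum of that type-count over all feasible sequences. The inequalities are then obtained simply by taking the minimum on the left-hand side after the per-sequence decomposition. Crucially, although $f^{\text{BG}}$, $f^{\text{BB}}$, etc. may each be attained by different sequences, this is exactly what makes the inequalities non-trivial and potentially useful as a lower bound engine.

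For the first inequality, I would fix an arbitrary feasible sequence $\sigma$ that retrieves all blocks in $\mathcal{B}$ from $\mathbb{C}$, and denote by $n(\sigma)$, $n^{\text{BG}}(\sigma)$, and $n^{\overline{\text{BG}}}(\sigma)$ its total relocation count, its BG count, and its non-BG count, respectively. Since every relocation is either BG or non-BG, we have $n(\sigma) = n^{\text{BG}}(\sigma) + n^{\overline{\text{BG}}}(\sigma)$. By definition of the minima, $n^{\text{BG}}(\sigma) \geq f^{\text{BG}}(\mathcal{B})$ and $n^{\overline{\text{BG}}}(\sigma) \geq f^{\overline{\text{BG}}}(\mathcal{B})$, so $n(\sigma) \geq f^{\text{BG}}(\mathcal{B}) + f^{\overline{\text{BG}}}(\mathcal{B})$. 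Since $\sigma$ was arbitrary, minimizing the left-hand side over all feasible sequences yields $f(\mathcal{B}) \geq f^{\text{BG}}(\mathcal{B}) + f^{\overline{\text{BG}}}(\mathcal{B})$.

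For the second inequality, I would repeat the argument one level deeper: for any fixed feasible $\sigma$, the non-BG relocations partition into BB, GB and GG moves, so $n^{\overline{\text{BG}}}(\sigma) = n^{\text{BB}}(\sigma) + n^{\text{GB}}(\sigma) + n^{\text{GG}}(\sigma) \geq f^{\text{BB}}(\mathcal{B}) + f^{\text{GB}}(\mathcal{B}) + f^{\text{GG}}(\mathcal{B})$, and taking the minimum over $\sigma$ gives $f^{\overline{\text{BG}}}(\mathcal{B}) \geq f^{\text{BB}}(\mathcal{B}) + f^{\text{GB}}(\mathcal{B}) + f^{\text{GG}}(\mathcal{B})$. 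Adding $f^{\text{BG}}(\mathcal{B})$ to both sides chains this onto the first inequality.

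There is no real technical obstacle here; the argument is essentially the standard ``minimum of a sum is at least the sum of the minima when each summand is minimized independently'' principle. The only point worth flagging in the write-up is that the minimizers of $f^{\text{BG}}$, $f^{\text{BB}}$, $f^{\text{GB}}$ and $f^{\text{GG}}$ need not coincide, which is precisely why these inequalities can be strict and why the framework has value for bound derivation; I would state this explicitly so that the reader does not mistake the theorem for a trivial equality, and so that its role as a scaffold for the subsequent, tighter bounds in Section~\ref{section_LB} is clear.
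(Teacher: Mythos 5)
Your argument is correct and is essentially the same as the paper's: both decompose each feasible sequence's relocation count by move type and then apply the ``minimum of a sum is at least the sum of the minima'' principle. Your write-up is merely more explicit than the paper's terse two-sentence proof, which is a fine choice.
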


\begin{nproof}
Note that a relocation move must be either a BG move or a non-BG move, i.e., a BB, GB, or GG move. Nevertheless, a feasible move sequence with the least number of total relocations might have more BG moves (non-BG moves, respectively) than another feasible move sequence. Hence, according to the definitions of $f$, $f^{\text{BG}}$ and $f^{\overline{\text{BG}}}$,  the first inequality follows. By applying the same argument, we have the second inequality.
\end{nproof}

Clearly, the inequalities in Theorem \ref{thm_LB_overall} provide a useful tool in analyzing the number of relocations through
considering specific types of moves.  Indeed, this idea can be further
generalized to consider subsets of blocks. Let $\{\mathbb{B}^k_i: i = 1, \ldots, n_k\}$ be a partition
of the complete block set $\mathbb{B}$, for $k=1,2$. Then, the next result can be proven easily using the same argument presented in the proof of Theorem \ref{thm_LB_overall}.

\begin{theorem}
\label{thm_LB_overall2}
The following inequalities hold.
\begin{align}
f^{\textrm{mt}}(\mathbb{B}) \geq  & \sum_{i=1}^{n_1} {f^{\textrm{mt}}(\mathbb{B}^1_i)} \ \ \
\forall \textrm{mt}\in \{\text{BB, BG, GB, GG}\} \notag \\
f^{\overline{\text{BG}}}(\mathbb{B})\geq  & \sum_{i=1}^{n_1} f^{\overline{\text{BG}}}(\mathbb{B}^1_i) \notag\\
\geq  &\sum_{i=1}^{n_1} (f^{\textrm{BB}}(\mathbb{B}^1_i)+ f^{\textrm{GB}}(\mathbb{B}^1_i)
+f^{\textrm{GG}}(\mathbb{B}^1_i))
\notag\\
f(\mathbb{B}) \geq  & f^{\text{BG}}(\mathbb{B}) + f^{\overline{\text{BG}}}(\mathbb{B})\geq  \sum_{i=1}^{n_1} {f^{\text{BG}}(\mathbb{B}^1_i)}+\sum_{i=1}^{n_2} f^{\overline{\text{BG}}}(\mathbb{B}^2_i) \notag
\end{align}
\end{theorem}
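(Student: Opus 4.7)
The plan is to mimic the reasoning in Theorem~\ref{thm_LB_overall} and to exploit the fact that every relocation touches exactly one block, so any partition of $\mathcal{B}$ induces a decomposition of the move count in any fixed feasible sequence. For the first inequality, I would fix an arbitrary move type $\mathrm{mt}\in\{\text{BB, BG, GB, GG}\}$, choose a feasible sequence $\sigma^{\ast}$ that attains $f^{\mathrm{mt}}(\mathcal{B})$, and observe that the total count of $\mathrm{mt}$-moves in $\sigma^{\ast}$ equals the sum over $i$ of the number of $\mathrm{mt}$-moves in $\sigma^{\ast}$ touching blocks in $\mathcal{B}^1_i$, since the parts are disjoint and cover $\mathcal{B}$. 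Each per-part count is realized by a feasible sequence, hence is an upper bound on the minimum $f^{\mathrm{mt}}(\mathcal{B}^1_i)$ obtained over all feasible sequences, so it is at least that minimum; summing over $i$ yields the bound.

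For the second chain, the first inequality is obtained by repeating the same argument with a feasible sequence that attains $f^{\overline{\text{BG}}}(\mathcal{B})$ instead. The second inequality follows by applying the subset version of Theorem~\ref{thm_LB_overall} term-by-term, using that a non-BG move must be of type BB, GB, or GG: the proof of Theorem~\ref{thm_LB_overall} works verbatim with any block subset in place of $\mathcal{B}$, so $f^{\overline{\text{BG}}}(\mathcal{B}^1_i) \geq f^{\text{BB}}(\mathcal{B}^1_i) + f^{\text{GB}}(\mathcal{B}^1_i) + f^{\text{GG}}(\mathcal{B}^1_i)$ for every $i$. For the third statement, the first inequality is exactly Theorem~\ref{thm_LB_overall}, and the second is obtained by bounding the two summands separately: apply the partition bound with $\{\mathcal{B}^1_i\}$ and a BG-minimizing sequence to control $f^{\text{BG}}(\mathcal{B})$, and apply it with $\{\mathcal{B}^2_i\}$ and a (possibly distinct) non-BG-minimizing sequence to control $f^{\overline{\text{BG}}}(\mathcal{B})$.

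The only conceptual subtlety, and the piece worth flagging as the ``main obstacle,'' is that the sequence minimizing a given move count on all of $\mathcal{B}$ need not minimize that count on any individual part $\mathcal{B}^1_i$; but since only an inequality is claimed, the per-part count produced by the global minimizer is still at least the per-part minimum by definition, and additivity of counting across the partition closes the argument. Correspondingly, in the third statement the two partitions $\{\mathcal{B}^1_i\}$ and $\{\mathcal{B}^2_i\}$ are permitted to differ, and two independently chosen optimizing sequences are invoked, one for each summand on the right-hand side.
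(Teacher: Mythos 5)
Your proposal is correct and follows exactly the route the paper intends: the paper states that Theorem~\ref{thm_LB_overall2} "can be proven easily using the same argument presented in the proof of Theorem~\ref{thm_LB_overall}," and your argument—fixing a sequence that attains each global minimum, decomposing its move count additively over the disjoint parts, and bounding each per-part count from below by the corresponding per-part minimum—is precisely that argument spelled out. The subtlety you flag (the global minimizer need not be optimal on any single part, but only the inequality direction is needed) is the right observation and is handled correctly.
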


\noindent \textbf{Remarks:} \\
(\romannumeral1) We highlight that Theorems \ref{thm_LB_overall} and \ref{thm_LB_overall2} present fundamental results.
They enable us to derive strong lower bounds through understanding and analyzing  particular move types and/or subsets that are more accessible than a complete move sequence or the whole configuration.
Indeed, basically all existing lower bounds can be obtained and interpreted easily by inequalities presented in those two theorems.
Hence, they provide a general and effective framework in performing a lower bound study.

\noindent
{\color{black}
(\romannumeral2) It is worth pointing out that this framework is generally applicable.
Without any modifications, it can be directly utilized to analyze the eight BRP variants with individual moves.
Note that for the four restricted BRP variants, $f^{\text{GB}}(\mathbb{B})$ and $f^{\text{GG}}(\mathbb{B})$ are naturally set to 0, given that there is no GB or GG move.
As a matter of fact, research on the lower bound derivation on variants with batch moves is scarce, except a study on variant 10 presented in \cite{Zhang-2016-FSMJ}.
So, one future research direction is to extend this framework with more move types to systematically study BRP variants with batch moves.}

In the following subsections,
we discuss existing lower bounds {\color{black}of the unrestricted BRP variants} and their connections to this framework,
then apply the framework to develop a new and stronger lower bound.

\subsection{A Revisit of Existing Lower Bounds}
In this subsection, we review important structural properties of the BRP that have been used in the development of four lower bounds {\color{black}of the unrestricted BRP variants} in the literature.
{\color{black}
We also discuss the computational complexities and the applicability of those lower bounds among different BRP variants, primarily among those with individual moves.
In particular, we demonstrate how those lower bounds can be derived and interpreted using the general framework in Theorems \ref{thm_LB_overall} and \ref{thm_LB_overall2}.
Without loss of generality, we assume, throughout this paper, that no directly retrievable block exists in the initial configuration $\mathbb{C}$.}

\ \\
\noindent \textit{{\color{black}1) LB$_1$: The First Fundamental Result}}

\begin{property}
\label{proty_lb1}
At least one BG move has to be implemented on a BP block.
\end{property}

\noindent$\blacksquare$ \noindent \textbf{Argument and Lower Bound Development}:\\
From the definition, it is clear that a BP block cannot be retrieved until it becomes a WP. So the property follows.

Kim and Hong \cite{Kim-2006-COR} introduce and analyze this property, and then propose a lower bound of the number of relocations.
Specifically, their lower bound is set to the number of BP blocks in the initial configuration.
The time complexity of an algorithm to compute this lower bound can be $\mathcal{O}(B)$.

{\color{black}
As the first lower bound appears in the literature, it is referred to as LB$_1$ by Tricoire et al.\cite{Tricoire-2018-COR}.
Although it is originally proposed for restricted BRP variants 1 and 5, LB$_1$ is also widely recognized as a lower bound for unrestricted BRP in the literature \cite{Forster-2012-COR} \cite{Tanaka-2018-COR} \cite{Tricoire-2018-COR}.
In fact, it can be directly applied to the four variants with the complete retrieval and individual moves.
Moreover, by simply assigning no-to-retrieve blocks with the same lowest priority, it will be able to handle the other four variants with the incomplete retrieval and individual moves.
Hence, it is applicable to all the eight variants with individual moves, including restricted and unrestricted ones. It is actually also the basis of the lower bound study \cite{Zhang-2016-FSMJ} on a variant with batch moves.}

\noindent$\blacksquare$ \noindent \textbf{Revisit and Demonstration}: \\
Let {\color{black}$\mathbb{B}^1$} be the collection of BP blocks in the initial
configuration. Property \ref{proty_lb1} can be expressed as $f^{\text{BG}}(\{b\}) \geq  1$,
for all $b\in \mathbb{B}^1$. Given the facts that $\{\{b\}: b=1,\dots, B\}$ is a partition of $\mathbb{B}$
and $\mathbb{B}^1\subset \mathbb{B}$,  we have
\begin{align*}
f(\mathbb{B}) \geq  & f^{\text{BG}}(\mathbb{B}) + f^{\overline{\text{BG}}}(\mathbb{B})
\geq   \sum_{b\in \mathbb{B}} f^{\text{BG}}(\{b\}) +
f^{\overline{\text{BG}}}(\mathbb{B}) \\
\geq  & \sum_{b\in \mathbb{B}} f^{\text{BG}}(\{b\}) \geq  \sum_{b\in \mathbb{B}^1} f^{\text{BG}}(\{b\}) \geq  |\mathbb{B}^1|,
\end{align*}
which exactly gives LB$_1$ as a valid lower bound.

\noindent$\blacksquare$ \noindent \textbf{Illustration}:\\
For the instances displayed in Figure \ref{figure: two_instances} (a) and (b),
priority numbers of BP blocks are in bold and underlined. So, {\color{black}$\mathbb{B}^1=\{5, 6, 7, 11, 12\}$}
and {\color{black}$\mathbb{B}^1=\{6, 8, 10, 12, 14, 16, 17, 18, 19\}$}, respectively,
which set LB$_1$ to {\color{black}5} and {\color{black}9}, respectively.

\begin{figure}[!t]
\begin{center}
\includegraphics{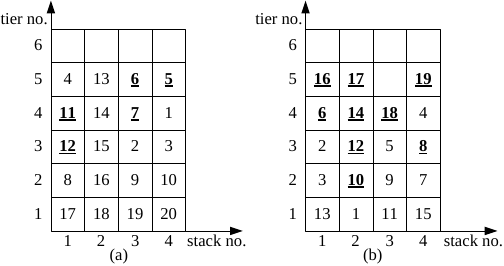}
\caption{{\color{black}Two instances with 4 stacks and a stack height limit of 6}}
\label{figure: two_instances}
\end{center}
\end{figure}

\ \\
\noindent \textit{{\color{black}2) LB$_2$: A Generalization of LB$_1$}}

\begin{property}
\label{proty_lb2}
At least one BB move has to be implemented on one block of the top $1^{st}$ layer,
if the highest priority of blocks in this layer is lower than the lowest priority of all stacks.
\end{property}

\noindent$\blacksquare$ \noindent \textbf{Argument and Lower Bound Development}:\\
If the condition of Property \ref{proty_lb2} is satisfied,
the first block to be moved is BP, and remains BP after the move,
i.e., a BB move.

Forster and Bortfeldt \cite{Forster-2012-COR} introduce and analyze this property,
and then propose a lower bound based on both Properties \ref{proty_lb1} and \ref{proty_lb2}.
Specifically, their lower bound is set to LB$_1$+1 if the condition of Property \ref{proty_lb2} is satisfied in the initial configuration $\mathbb{C}$ and LB$_1$ otherwise.
The time complexity of an algorithm to compute the lower bound can be $\mathcal{O}(B)$. As the second lower bound appears in the literature,
it is referred to as {\color{black}LB$_2$} by Tricoire et al. \cite{Tricoire-2018-COR}.
{\color{black}
LB$_2$  is initially proposed for unrestricted variant 13.
Similar to LB$_1$, it in fact can be applied to all the eight variants with individual moves, noting that any restricted variant can be naturally relaxed to an unrestricted variant.}

\noindent$\blacksquare$ \noindent \textbf{Revisit and Demonstration}: \\
Let {\color{black}$\mathbb{B}^2$} be the collection of blocks of the top $1^{st}$ layer.
Property~\ref{proty_lb2} can be expressed as: $f^{\text{BB}}(\mathbb{B}^2) \geq  1$ if $\mathbb{B}^2$ satisfies the condition of Property \ref{proty_lb2} ($\mathbb{B}^2$ satisfies P2 for short). Given the fact that $\mathbb{B}^2 \subseteq \mathbb{B}$, we have
\begin{align}
f(\mathbb{B}) \geq  & f^{\text{BG}}(\mathbb{B}) + f^{\overline{\text{BG}}}(\mathbb{B})
\geq  \sum_{b\in \mathbb{B}} f^{\text{BG}}(\{b\}) + f^{\text{BB}}(\mathbb{B}) \notag\\
\geq  & |\mathbb{B}^1| + f^{\text{BB}}(\mathbb{B}^2)
\geq  \begin{cases}
|\mathbb{B}^1| + 1, & \text{if $\mathbb{B}^2$ satisfies P2}\\
|\mathbb{B}^1|, & \text{otherwise}
\end{cases}\notag
\end{align}
which exactly gives LB$_2$ as a valid lower bound.

\noindent$\blacksquare$ \noindent \textbf{Illustration}:\\
For the instance displayed in Figure \ref{figure: two_instances} (a), we have {\color{black}$\mathbb{B}^2=\{4, 13, 6, 5\}$}.
The priorities of all four stacks are respectively {\color{black}4, 13, 2, 1} from the left to the right, and the lowest one is {\color{black}13}.
As the highest priority of blocks in $\mathbb{B}^2$ is {\color{black}4}, which is higher than {\color{black}13}.
Hence, $\mathbb{B}^2$ does not satisfy P2, and $\text{LB}_2 = \text{LB}_1 = {\color{black}5}$.
For the instance displayed in Figure \ref{figure: two_instances} (b), we have $\mathbb{B}^2=\{16, 17, 18, 19\}$.
The priorities of all four stacks are respectively 2, 1, 5, 4 from the left to the right, and the lowest one is 5.
The highest priority of blocks in $\mathbb{B}^2$ is 16, which is lower than 5.
Hence, $\mathbb{B}^2$ satisfies P2, and  $\text{LB}_2 = \text{LB}_1 + 1 = {\color{black}9 + 1 = 10}$.

\ \\
\noindent \textit{{\color{black}3) LB$_3$:  A Generalization of LB$_2$}}

\begin{property}
\label{proty_lb3}
At least one non-BG move has to be implemented on a block in each of the top $k$ layers,
if : (1) the target block is not in the top $k$ layers,
and (2) the highest priority of BP blocks in the top $k$ layers is lower than
the lowest priority of all stacks after removing the top $k-1$ layers.
\end{property}

%

\noindent$\blacksquare$ \noindent \textbf{Argument and Lower Bound Development}:\\
Condition (1) in Property \ref{proty_lb3} ensures that the target block remains unmoved until at least one block has been relocated from each of the top $k$ layers.
Hence, the first move of a block from each of the top $k$ layers is a relocation.
Moreover, a BP block (e.g., a block above the target block) exists among each of the top $k$ layers.
Condition (2) ensures that any BP block in one of the top $k$ layers remains BP after the first relocation implemented in that layer.
Therefore, the first move of a block in each of the top $k$ layers is a non-BG move.

Tricoire et al. \cite{Tricoire-2018-COR} introduce and analyze this property,
and propose a lower bound based on both Properties \ref{proty_lb1} and \ref{proty_lb3}.
Specifically, their lower bound is set to $\text{LB}_1 + k$, as long as the maximum top $k$ layers satisfy the conditions of Property~\ref{proty_lb3} in the initial configuration $\mathbb{C}$.
This lower bound is referred to as LB$_3$ as it probably is the third lower bound {\color{black}appears} in the literature.
Note that, LB$_3$ generalizes and dominates LB$_2$ \cite{Tricoire-2018-COR} since Property \ref{proty_lb3} generalizes Property \ref{proty_lb2}.

The time complexity of an algorithm to compute LB$_3$ can be $\mathcal{O}(B)$,
although the conditions of Property 3 have to be checked several times (i.e., for $k = 1, \ldots, K$, and $K < B / S$).
Our reasoning is as follows.
(\romannumeral1) The target block can be found with a time complexity of $\mathcal{O}(B)$.
Without loss of generality, let it {\color{black}be} below the top $K^{th}$ layer and there are at least $K$ blocks in each stack.
(\romannumeral2) The highest priority of BP blocks among the top $1^{st}$ layer can be computed with a time complexity of $\mathcal{O}(S)$,
and that of the top $k^{th}$ ($2 \leq k \leq K$) layers can be computed with a time complexity of $\mathcal{O}(S)$ based on the results of the top ${k-1}^{th}$,
i.e., $S$ comparison operations.
(\romannumeral3) The lowest priority of stacks after removing the top ${K-1}^{th}$ layers can be computed with a time complexity of $\mathcal{O}(B)$,
and the lowest priority of stacks after removing the top $k^{th}$ ($1 \leq k \leq K-2$) layers can be computed with a time complexity of $\mathcal{O}(S)$ based on the results of the top ${k+1}^{th}$,
i.e., $2S-1$ comparison operations.
(\romannumeral4) Given the above data,
the conditions of Property 3 can be checked with a time complexity of $\mathcal{O}(1)$ for any $k = 1, \cdots, K$.
Following the calculation $B+S+(K-1)S+B+(K-2)(2S-1) + K < 2B+3KS+2 < 5B+2$, we conclude the time complexity of the overall algorithm as $\mathcal{O}(B)$.
{\color{black}
Since its derivation is a direct extension of that of LB$_2$, LB$_3$ is applicable to all the eight variants with individual moves.}


\noindent$\blacksquare$ \noindent \textbf{Revisit and Demonstration}: \\
Let {\color{black}$\mathbb{B}^3$} be the collection of blocks in the top $k$ layers.
Property~\ref{proty_lb3} can be expressed as:
$f^{\overline{\text{BG}}}(\mathbb{B}^3) \geq  k$ if $\mathbb{B}^3$ satisfies the conditions of Property 3 ($\mathbb{B}^3$ satisfies P3 for short).
Given the fact that $\mathbb{B}^3 \subseteq \mathbb{B}$, we have
\begin{align}
f(\mathbb{B}) \geq  & f^{\text{BG}}(\mathbb{B}) + f^{\overline{\text{BG}}}(\mathbb{B})
\geq  \sum_{b\in \mathbb{B}} f^{\text{BG}}(\{b\}) + f^{\overline{\text{BG}}}(\mathbb{B}^3) \notag\\
\geq  & |\mathbb{B}^1| + f^{\overline{\text{BG}}}(\mathbb{B}^3)
\geq  |\mathbb{B}^1| + k.  \notag
\end{align}
Hence, if $\mathbb{B}^3$ is the maximum  top $k$ layers satisfying P3, this derivation exactly gives LB$_3$ as a valid lower bound.

\noindent$\blacksquare$ \noindent \textbf{Illustration}:\\
For the instance displayed in Figure \ref{figure: two_instances} (a), {\color{black}$\mathbb{B}^3 = \{4, 13, 6, 5\}$} for $k=1$.
The target block is 1 and is not in $\mathbb{B}^3$.
The priorities of all four stacks are respectively {\color{black}4, 13, 2, 1} from the left to the right, and the lowest one is {\color{black}13}.
The highest priority of BP blocks in $\mathbb{B}^3$ is {\color{black}4}, which is higher than {\color{black}13}.
Therefore, $\mathbb{B}^3$ does not satisfy P3 for $k = 1$.
Therefore, no $\mathbb{B}^3$ satisfies P3,
and $\text{LB}_3 = \text{LB}_1 + k = {\color{black}5 + 0 = 5}$.
For the instance displayed in Figure \ref{figure: two_instances} (b), $\mathbb{B}^3 = \{16, 17, 18, 19; 6, 14, 5, 4\}$ for $k = 2$.
The target block is 1 and is not in $\mathbb{B}^3$.
The priorities of all four stacks after removing the top 1 ($= 2 - 1$) layer of blocks are respectively 2, 1, 5, 4 from the left to the right, and the lowest one is 5.
The highest priority of BP blocks in $\mathbb{B}^3$ is 6, which is lower than 5. Therefore, $\mathbb{B}^3$ satisfies P3 for $k = 2$.
We can further evaluate a larger $\mathbb{B}^3$ by setting it to $ \{16, 17, 18, 19; 6, 14, 5, 4; 2, 12, {\color{black}9}, 8\}$, i.e.,  $k = 3$.
Again, the target block is 1 and is not in $\mathbb{B}^3$.
The priorities of all four stacks after removing the top 2 ($= 3 - 1$) layers of blocks are respectively 2, 1, {\color{black}9, 7} from the left to the right, and the lowest one is {\color{black}9}.
The highest priority of BP blocks in $\mathbb{B}^3$ is 6, which is higher than {\color{black}9}.
Hence, $\mathbb{B}^3$ does not satisfy P3 for $k=3$.
As a conclusion, we have the maximum $\mathbb{B}^3$ that satisfies P3 when $k = 2$, and $\text{LB}_3 = \text{LB}_1 + k = {\color{black}9 + 2 = 11}$.

\ \\
\noindent \textit{{\color{black}4) LB-N: Another Generalization of LB$_2$}}

\begin{property}
\label{proty_lb4}
Consider the initial configuration $\mathbb{C}$ where the target block is in stack $s$.
Perform an experiment by relocating each block above the target block once without considering the stack height limit or moving blocks in other stacks.

If some of the relocated block(s) cannot be transformed to be WP in any of such experiments,
we can conclude with respect to $\mathbb{C}$ that either (1) at least one BB move has to be implemented on one of the relocated blocks,
or (2) at least one GB or GG move has to be implemented on a block with the highest priority in one of the other $S - 1$ stacks.
\end{property}

%

\noindent$\blacksquare$ \noindent \textbf{Argument and Lower Bound Development}:\\
Since all the relocated blocks are BP in $\mathbb{C}$, i.e., above the target block,
the condition of Property 4 ensures that some of them have to be implemented with BB moves if the priorities of other $S - 1$ stacks are not lowered beforehand.
The priority of a stack can be lowered only if its block with the highest priority, i.e., a WP block, is retrieved or relocated.
Since the target block is below the relocated blocks,
no WP block can be retrieved before completely relocating those blocks.
Therefore, the priorities of other $S - 1$ stacks can only be lowered by relocating WP blocks,
i.e., conducting GB or GG moves.
In conclusion, at least one non BG move has to be implemented on one of the relocated blocks or a block with the highest priority in one of the other $S - 1$ stacks.

Tanaka and Mizuno \cite{Tanaka-2018-COR} introduce and analyze this property,
and propose a new lower bound based on both Properties \ref{proty_lb1} and \ref{proty_lb4}, which is referred to as LB-N.
They further have a slight modification by considering the stack height limit in a specific situation. As such a change is rather minor, we do not include it in the following discussions to minimize distractions.

Specifically,  LB-N is set to $\text{LB}_1 + 1$ if the condition of Property 4 is satisfied according to an iterative procedure, and LB$_1$ otherwise.
First, they check the condition of Property~\ref{proty_lb4} for the initial configuration.
If satisfied, they increase LB-N by 1 and terminate. Otherwise, they remove the target block as well as all blocks above it,
and recheck the condition of Property 4 for the updated configuration (and the updated target block).
They repeat the above procedure until $\text{LB-N} = \text{LB}_1 + 1$ or all the blocks are removed.
Note that, if a block is removed then all blocks above it are removed, which suggests that an initially BP (WP, respectively) block remains BP (WP, respectively) after the removal.
Hence, the validity of the aforementioned iterative procedure is guaranteed \cite{Tanaka-2018-COR}.

For a target block $i$, the time complexity of checking the condition of Property \ref{proty_lb4} can be $B_{i}\log{S}$,
where $B_{i}$ is the number of blocks above block $i$ at the beginning of the iteration if stacks are sorted in the increasing order of their priorities beforehand \cite{Tanaka-2018-COR}.
The $\log{S}$ comes from the dichotomy to select an appropriate stack for each relocated block.
However, the priority of the stack of the target block is changed after removing blocks during each iteration.
The time complexity of finding the new order of the stack is $\mathcal{O}(\log{S})$, since priorities of other stacks remain unchanged and dichotomy can be used.
Then stacks have to be resorted with a time complexity of $\mathcal{O}(S)$.
Therefore, the time complexity of the whole algorithm for LB-N is $\mathcal{O}(BS)$.

{\color{black}
LB-N initially is proposed for unrestricted variant 9, where it generalizes LB$_2$ \cite{Tanaka-2018-COR}.
Indeed, it can be directly applied to four variants with distinct priorities and individual moves.
For other four variants with duplicate priorities and individual moves,  we note that it could also be applied if its derivation can be modified with some minor changes.
For example, when multiple target blocks exist, we can simply keep one but remove other target blocks and the blocks above them from $\mathbb{C}$.
The updated configuration allows us to perform the operations presented in Property 4 to derive a lower bound to $\mathbb{C}$.
}

\noindent$\blacksquare$ \noindent \textbf{Revisit and Demonstration}: \\
During one iteration, let {\color{black}$\mathbb{B}^4$} be the collection of the blocks above the target block and a block with the highest priority in each of the other $S - 1$ stacks (if not empty).
Then, Property \ref{proty_lb4} can be expressed as: $f^{\overline{\text{BG}}} (\mathbb{B}^4) \geq  1$ if $\mathbb{B}^4$ satisfies the condition of Property \ref{proty_lb4} ($\mathbb{B}^4$ satisfies P4 for short).
Given the fact that $\mathbb{B}^4 \subseteq \mathbb{B}$, we have
\begin{align}
f(\mathbb{B}) \geq  & f^{\text{BG}}(\mathbb{B}) + f^{\overline{\text{BG}}}(\mathbb{B})
\geq  \sum_{b\in \mathbb{B}} f^{\text{BG}}(\{b\}) + f^{\overline{\text{BG}}}(\mathbb{B}^4) \notag\\
\geq  & |\mathbb{B}^1| + f^{\overline{\text{BG}}}(\mathbb{B}^4)
\geq  \begin{cases}
|\mathbb{B}^1| + 1, & \text{if $\mathbb{B}^4$ satisfies P4}\\
|\mathbb{B}^1|, & \text{otherwise}
\end{cases}\notag
\end{align}
which exactly gives LB-N as a valid lower bound.

\noindent$\blacksquare$ \noindent \textbf{Illustration}:\\
{\color{black}
For the instance displayed in Figure \ref{figure: two_instances} (a),
block 5 is above the target block, i.e., block 1, in stack 4, and
the blocks with the highest priority in each of the other three stacks are respectively blocks 4, 13 and 2 from the left to the right.
Hence, $\mathbb{B}^4 = \{5\} \cup \{4, 13, 2\}$.
Since block 5 can become WP if is relocated to stack 2,
$\mathbb{B}^4$ does not satisfies P4, and we update the configuration by removing blocks 1 and 5.
For this new configuration, block 2 becomes the target block and
blocks 6 and 7 are above it in stack 3.
The blocks with the highest priority in each of the other three stacks are respectively blocks 4, 13 and 3 from the left to the right.
Hence, $\mathbb{B}^4 = \{6, 7\} \cup \{4, 13, 3\}$.
Since one of blocks 6 and 7 cannot become  WP if both of them are relocated only once,
$\mathbb{B}^4$ satisfies P4, and we set $\text{LB-N} = \text{LB}_1 + 1 = 5 + 1 = 6$.}
For the instance displayed in Figure \ref{figure: two_instances} (b),
blocks 17, 14, 12 and 10 are above the target block in stack 2,
and the blocks with the highest priority in each of the other three stacks are respectively blocks 2, 5 and 4 from the left to the right.
{\color{black}Hence, $\mathbb{B}^4 = \{17, 14, 12, 10\} \cup \{2, 5, 4\}$.}
Since block 17 cannot be relocated once to be WP, $\mathbb{B}^4$ satisfies P4,
and we have $\text{LB-N} = \text{LB}_1 + 1 = {\color{black}9 + 1 = 10}$.

\subsection{New Structural Properties}
In this subsection, we present four new structural properties for some subsets of the initial configuration.
{\color{black} Note that existing properties (i.e., P2 and P3) heavily depend on the concept of ``layer'', i.e., a subset of $S$ blocks that are physically of the same depth in all stacks.
And only physically adjacent layers are considered together.
Different from this natural one, we introduce the new ``virtual layer'' concept, which consists of $S$ blocks such that each of them is from a unique stack, but they might be of different depths.
Moreover, multiple virtual layers that are not physically adjacent could be easily considered.
Clearly, for this very flexible and general structure, if some connections to the necessary relocation moves can be established, it definitely provides a fundamental and powerful tool to analyze the BRP.
To the best of our knowledge, no similar structure has been considered in any of prior research.}

\ \\
\noindent \textit{{\color{black}1) Property 5: An Insight from a Virtual Layer}}

\begin{theorem}
\label{thm_new5}
(Property 5)
Pick a block from each of the $S$ stacks to form a \textit{virtual layer}.
At least one non-BG move has to be implemented on blocks of this virtual layer,
if : (1) there exists a block piled below the virtual layer such that its priority is higher than the highest priority of blocks in the virtual layer,
and (2) the highest priority of BP blocks in the virtual layer is lower than the lowest priority of all stacks after removing blocks above the virtual layer.
\end{theorem}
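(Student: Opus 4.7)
The plan is to establish the stronger statement $f^{\overline{\text{BG}}}(\mathcal{B}^{5}) \geq 1$, where $\mathcal{B}^{5}$ denotes the chosen virtual layer, so that the conclusion plugs directly into the framework of Theorems \ref{thm_LB_overall} and \ref{thm_LB_overall2}. I would first invoke condition (1) to guarantee that at least one virtual layer block is in fact relocated during the retrieval. Let $b^*$ be a block sitting below the virtual layer whose priority is strictly higher than every virtual layer block. Since $b^*$ must eventually be retrieved and the virtual layer block in $b^*$'s stack lies above it, this virtual layer block must be moved off the stack beforehand; because it has strictly lower priority than $b^*$, it cannot be retrieved and therefore must be relocated. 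In particular, any virtual layer move that occurs before $b^*$ is retrieved has to be a relocation.

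Next, I would examine the first virtual layer relocation in the move sequence, in which some virtual layer block $b$ is moved to a destination stack $d$. If $b$ is WP at that instant, the move is of type GG or GB, which is already non-BG, and there is nothing more to prove. The substantive case is $b$ being BP, where the goal is to show that $b$ remains BP after landing on $d$, making the move a BB. Let $p^*$ denote the smallest priority value among BP virtual layer blocks and $q^*$ denote the largest priority value among the stacks after removing all blocks above the virtual layer; condition (2) states $p^* > q^*$, so in particular the priority value of $b$ exceeds $q^*$.

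The key obstacle is justifying that at the instant of this first virtual layer relocation, the truncated portion of every stack---the virtual layer block together with everything strictly below it---is still intact, so that the definition of $q^*$ continues to control the contents of the destination. I would resolve this by noting that every block strictly below a virtual layer block is shielded by it: such a block can only be retrieved after its overlying virtual layer block has been moved, and by hypothesis no virtual layer block has been moved yet. Consequently, stack $d$ still contains some block of priority value $\leq q^*$, which has strictly higher priority than $b$, forcing $b$ to remain BP and the move to be of type BB. Combining the two cases yields $f^{\overline{\text{BG}}}(\mathcal{B}^{5}) \geq 1$, and substituting this subset bound into the general framework produces the theorem.
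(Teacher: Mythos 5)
Your proof is correct and follows essentially the same route as the paper's: consider the first move of a virtual-layer block, use condition (1) to rule out a retrieval (forcing a GB/GG move when the block is WP) and condition (2) to show a BP block stays BP (forcing a BB move). You additionally spell out two points the paper leaves implicit --- that condition (1) guarantees such a relocation actually occurs, and that the truncated stacks are still intact at that moment --- which only strengthens the argument.
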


\begin{nproof}
Consider the first move of a block in the virtual layer.
Without loss of generality, assume that it is implemented on block $i$.
We will prove that this move is a non-BG move.

When block $i$ is the first one in the virtual layer to be relocated,
other blocks in the virtual layer remain in their initial positions.
Then we consider the following two situations.
(\romannumeral1) If block $i$ is initially WP, it cannot be retrieved, since condition (1) ensures that a block with a higher priority has not been retrieved yet.
So, this first move must be a GB or GG move.
(\romannumeral2) Otherwise, if block $i$ is initially BP, it cannot be relocated to be WP,
since condition (2) ensures that any destination stack is with a higher priority.
Therefore, this first move is a BB move.
In conclusion, the first move in the virtual layer must be a non-BG move, and 
Theorem 3 is proved.
\end{nproof}

\noindent$\blacksquare$ \noindent \textbf{Illustration}:\\
Let {\color{black}$\mathbb{B}^5$} be a virtual layer.
For the instance displayed in Figure \ref{figure: two_instances} (b),
we can have $\mathbb{B}^5=\{2, 12, 18, 8\}$,
and the highest priority of its blocks is 2.
Note that block 1 is below the virtual layer and its priority is higher than 2.
The priorities of all four stacks after removing blocks above the virtual layer are respectively 2, 1, 5 and {\color{black}7} from the left to the right, and the lowest one is 7.
As the highest priority of BP blocks in $\mathbb{B}^5$ is 8 (lower than 7), $\mathbb{B}^5$ satisfies the conditions of Property 5 ($\mathbb{B}^5$ satisfies P5 for short). Hence, we have $f^{\overline{\text{BG}}}(\mathbb{B}^5) \geq  1$.


\ \\
\noindent \textit{{\color{black}2) Property 6: An Extension to Multiple Non-overlapping Virtual Layers}}\\[0.05in]
\indent Utilizing the general framework in Theorems  \ref{thm_LB_overall} and \ref{thm_LB_overall2}, we take advantage of Theorem \ref{thm_new5} to derive the following corollary.

\begin{corollary}
\label{cor_new6}
(Property 6)
Given $k$ non-overlapping virtual layers,
each of which satisfies P5,
then at least $k$ non-BG moves will be implemented on blocks of  those virtual layers.
\end{corollary}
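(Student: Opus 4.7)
The plan is to reduce Corollary \ref{cor_new6} to a direct application of Theorem \ref{thm_new5} combined with the aggregation principle already embodied in Theorem \ref{thm_LB_overall2}. Since each of the $k$ non-overlapping virtual layers $\mathcal{B}^5_1, \ldots, \mathcal{B}^5_k$ individually satisfies P5 with respect to the initial configuration $\mathbb{C}$, Theorem \ref{thm_new5} immediately yields $f^{\overline{\text{BG}}}(\mathcal{B}^5_i) \geq 1$ for every $i = 1, \ldots, k$. The task is then to aggregate these $k$ per-layer bounds into a single bound of $k$ on non-BG moves applied to blocks in $\bigcup_{i=1}^{k} \mathcal{B}^5_i$.

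The main step is to set $\mathcal{B}^6 = \bigcup_{i=1}^{k} \mathcal{B}^5_i$ and observe that, because the virtual layers are pairwise disjoint, the family $\{\mathcal{B}^5_1, \ldots, \mathcal{B}^5_k\}$ can be completed to a partition of $\mathcal{B}$ by appending the residual set $\mathcal{B} \setminus \mathcal{B}^6$. Invoking the $f^{\overline{\text{BG}}}$-inequality from Theorem \ref{thm_LB_overall2} on this partition (and discarding the nonnegative contribution of the residual set) gives
\begin{equation*}
f^{\overline{\text{BG}}}(\mathcal{B}) \;\geq\; \sum_{i=1}^{k} f^{\overline{\text{BG}}}(\mathcal{B}^5_i) \;\geq\; k,
\end{equation*}
which is precisely the conclusion of Corollary \ref{cor_new6}. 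The chain from $f(\mathcal{B}) \geq f^{\overline{\text{BG}}}(\mathcal{B})$ is already provided by Theorem \ref{thm_LB_overall}, so no additional machinery is needed to translate this into a bound on total relocations if later desired.

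The only real subtlety, rather than a computational obstacle, is justifying that the $k$ non-BG moves produced by the $k$ separate invocations of Theorem \ref{thm_new5} are distinct. This is where the non-overlapping hypothesis is essential: because each conclusion of P5 is a move on a block of the corresponding layer, and a single relocation acts on exactly one block, it can be charged to at most one of the disjoint layers. Thus the per-layer counts cannot collide and genuinely sum. I would emphasize this point explicitly in the write-up, since it is the conceptual content of the corollary; the inequality itself is then immediate from the general framework.
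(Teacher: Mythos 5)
Your proposal is correct and follows the same route the paper intends: the paper states Corollary~\ref{cor_new6} as an immediate consequence of applying Theorem~\ref{thm_new5} to each of the $k$ disjoint virtual layers and then aggregating via the superadditivity of $f^{\overline{\text{BG}}}$ over a partition from Theorem~\ref{thm_LB_overall2}. Your explicit remark that disjointness is what prevents a single relocation from being double-counted is exactly the content the framework's partition inequality encapsulates, so nothing further is needed.
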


\noindent$\blacksquare$ \textbf{Illustration}:\\
For the instance displayed in Figure \ref{figure: two_instances} (b),
we can have three non-overlapping virtual layers $\mathbb{B}^5_1 = \{16, 17, 18, 19\}$, $\mathbb{B}^5_2 = \{6, 14, 5, 4\}$, and {\color{black}$\mathbb{B}^5_3 = \{2, 12, 9, 7\}$},
which all satisfy P5.
Therefore, we can have $f^{\overline{\text{BG}}}(\mathbb{B}^5_1 + \mathbb{B}^5_2 + \mathbb{B}^5_3) \geq  3$.
Note in the illustration following Property \ref{proty_lb3} that, only 2 non-BG moves can be guaranteed if Property \ref{proty_lb3} is applied.

Unlike the conventional concept of top $k$ layers used in Property \ref{proty_lb3},
those virtual layers in Corollary \ref{cor_new6} are not necessarily to be top layers or piled consecutively.
Moreover, given the top $k$ layers that satisfies P3,
any virtual layer formed by blocks in the top $k$ layers satisfies P5.
Hence, it can be easily seen that Property \ref{proty_lb3} is a special case of Property 6.

\begin{corollary}
\label{cor_6generalizes3}
With the same configuration $\mathbb{C}$, Property 6 subsumes Property 3 as a special case.
\end{corollary}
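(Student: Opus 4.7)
The plan is to show that whenever the top $k$ layers satisfy the hypotheses of Property~3, we can exhibit $k$ non-overlapping virtual layers each satisfying Property~5, so that Corollary~1 (Property~6) directly recovers the bound of $k$ non-BG moves. The natural choice is to take the $i$-th virtual layer $V_i$ to be exactly the top $i$-th layer for $i=1,\dots,k$. Non-overlap is immediate because different top layers occupy disjoint positions, and each $V_i$ contains exactly one block per stack, as required by the definition of a virtual layer.

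Next, I would verify the two conditions of Property~5 for each $V_i$. For condition~(1), Property~3's hypothesis~(1) guarantees that the target block lies strictly below the top $k$-th layer, hence strictly below $V_i$ for every $i\le k$; since the target block has priority~$1$, it beats the highest priority of any block in $V_i$, so condition~(1) of P5 holds for $V_i$. For condition~(2), I need to compare $A_i$, the highest priority (smallest priority number) among BP blocks in $V_i$, with $B_i$, the lowest stack priority (largest priority number) after removing the blocks above $V_i$, i.e., the top $i-1$ layers. The corresponding quantities $A$ and $B$ from Property~3 are defined with the full top $k$ layers and with the top $k-1$ layers removed, respectively, and satisfy $A>B$ by hypothesis~(2) of P3.

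The key small step is then a monotonicity argument in $i$. Since the BP blocks of $V_i$ form a subset of the BP blocks in the top $k$ layers, the minimum priority number over the smaller set is at least as large, so $A_i\ge A$. Conversely, removing fewer top layers (the top $i-1$ instead of the top $k-1$) can only leave each stack with the same or a higher-priority topmost block, so each stack's priority number after removing the top $i-1$ layers is at most its priority number after removing the top $k-1$ layers; taking the max over stacks yields $B_i\le B$. Chaining these gives $A_i\ge A>B\ge B_i$, which is exactly condition~(2) of P5 for $V_i$.

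With P5 established for each of the $k$ non-overlapping virtual layers $V_1,\dots,V_k$, Corollary~\ref{cor_new6} (Property~6) immediately yields that at least $k$ non-BG moves are implemented on blocks in these layers, which is the conclusion of Property~3 in the form used to derive LB3. Hence every situation covered by Property~3 is a special case of Property~6, as claimed. I do not anticipate a major obstacle; the only subtle point is the direction of the monotonicity inequalities, which is easy to mis-state because ``higher priority'' corresponds to a \emph{smaller} priority number, so I would be careful to phrase the comparisons in terms of priority numbers rather than the informal ``higher/lower'' language.
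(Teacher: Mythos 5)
Your proof is correct and follows essentially the same route as the paper, which only sketches the argument in the remark preceding the corollary (``given the top $k$ layers that satisfies P3, any virtual layer formed by blocks in the top $k$ layers satisfies P5''): you instantiate the $k$ virtual layers as the top $1^{st}$ through top $k^{th}$ layers and verify P5 for each via the monotonicity of the two relevant priority comparisons. Your write-up simply supplies the details (the subset/superset inequalities $A_i \geq A > B \geq B_i$ and the use of the target block for condition (1)) that the paper leaves implicit.
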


{\color{black}
To identify a virtual layer satisfying P5, we have designed two algorithms both with $\mathcal{O}(BS)$ time complexity.
Details of the two algorithms, referred to as A5 and A5*, are presented in Appendix \ref{Appendix_Alg_P5}.
Clearly, they can always be reused to identify multiple non-overlapping virtual layers satisfying P6.
In addition, as shown in Appendix \ref{Appendix_Alg_P5}, our algorithms generalize the subset selection procedure of P3.
Note that the top layers satisfying P3 will also be identified by our algorithms, as they always start with physical layers from the top of $\mathbb{C}$.}


\ \\
\noindent \textit{{\color{black}3) Property 7: An Extension to Overlapping Virtual Layers}}\\[0.05in]
\indent Next, we extend to consider a more complex block subset where two virtual layers share an overlapped block. Actually, two virtual layers can cross once through the shared block so that  one layer does not need to be above the other layer.


\begin{theorem}
\label{thm_new7}
(Property 7)
{\color{black}Consider two virtual layers that both satisfy P5 and share exactly one WP block.
If the priority of the shared WP block is lower than the lowest priority of other $S - 1$ stacks (i.e., excluding the one with the shared WP block) after removing blocks above both of the two layers,}
then either at least 2 non-BG moves will be implemented on blocks in the two virtual layers,
or at least 1 GB and 1 BG moves will be implemented on the shared WP block.
\end{theorem}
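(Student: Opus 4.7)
The plan is to apply Theorem~\ref{thm_new5} to each of the two virtual layers and split on whether the two ``first moves'' coincide as a single relocation event. Let $w$ denote the shared WP block (so $L_u\cap L_\ell=\{w\}$) with priority number $p$, and recall that by hypothesis $p$ equals the largest priority number among all stack priorities after removing every block strictly above $L_u$. The proof of Theorem~\ref{thm_new5} in fact establishes that the first relocation touching any block of $L_u$ is non-BG, and the same holds for $L_\ell$.

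If the ``first move on $L_u$'' and the ``first move on $L_\ell$'' are distinct relocation events, then since the two layers overlap only in $w$ and individual moves act on a single block at a time, these two events must act on two different blocks of $L_u\cup L_\ell$, yielding two non-BG moves on blocks of the two virtual layers; this establishes the first disjunct. Otherwise the two first moves coincide, so the moved block lies in $L_u\cap L_\ell=\{w\}$; the common first move is then performed on $w$ and, since $w$ is initially WP, is either a GB or a GG move.

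I then rule out GG. At the instant of $w$'s first relocation, no block of $L_u\cup L_\ell$ has moved, so no $L_u$-block has left its stack; consequently every block strictly below $L_u$ is still in its original stack, trapped under the intact $L_u$-block of its stack. For each stack $s$, the block witnessing the ``after removing blocks above $L_u$'' priority of $s$---either the $L_u$-block of $s$ or a block beneath it---is therefore still present in $s$, so the current min-priority number of $s$ is at most this witness's priority number, which by hypothesis is at most $p$. A GG relocation of $w$ requires a destination stack whose current priority number exceeds $p$; no such stack exists, so $w$'s first move must be GB.

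Finally, after this GB the block $w$ is BP, with some block $h$ of strictly higher priority sitting beneath it. Since $h$ must be retrieved before $w$, and $w$ sits atop $h$, $w$ must be relocated again; moreover, once $w$ itself becomes the current target, every higher-priority block has already left the yard, so $w$ cannot still be BP and must be WP immediately before its retrieval. The BP-to-WP transition of $w$ can only be realized by relocating $w$ onto a WP position, i.e., by a BG move. Combined with the initial GB this gives the second disjunct: at least one GB and one BG move are performed on $w$. The main obstacle is the GG-impossibility step: even though retrievals and relocations involving blocks above $L_u$ are permitted before $w$'s first move, one must carefully use the coincidence assumption to conclude that every $L_u$-block, and therefore every sub-$L_u$ witness, remains in place, thereby bounding each stack's current priority number by $p$.
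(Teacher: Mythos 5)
Your proof is correct and follows essentially the same route as the paper's: analyze the first move touching each virtual layer, obtain the two-non-BG-moves disjunct when these act on distinct blocks, and otherwise conclude the moved block is the shared WP block $w$, whose first relocation is GB and which must later receive a BG move before retrieval. Your only substantive addition is the explicit argument ruling out a GG move on $w$ via the stack-priority hypothesis (every stack still holds its at-or-below-upper-layer witness, so no destination stack has priority number exceeding that of $w$), a step the paper asserts without detail.
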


\begin{nproof}
We consider the first move of a block in each of those two virtual layers.
Without loss of generality,
{\color{black}we assume that those moves are implemented on block $i_1$ in the  first layer and block $i_2$
in the second layer respectively,
and block $i_1$ is moved before block $i_2$.}

We consider the following two situations.
(\romannumeral1) If block $i_1$ is the shared WP block,
then the move on block $i_1$ is a GB move,
and a BG move is needed for block $i_1$ in the latter moves.
Therefore, at least 1 GB and 1 BG moves have to be implemented on the shared WP block.
(\romannumeral2) If block $i_1$ is not the shared WP block,
then the move on block $i_1$ is a non-BG move since the first virtual layer satisfies P5.
Since the shared WP block is not moved,
all blocks of the second virtual layer remain in their initial positions after the move of block $i_1$.
Then the move on block $i_2$ is also a non-BG move since the second virtual layer satisfies P5.
Therefore, at least 2 non-BG moves have to be implemented on blocks in those two virtual layers.
In conclusion, Theorem 4 is proved.
\end{nproof}

\noindent $\blacksquare$ \textbf{Illustration}:\\
Let {\color{black}$\mathbb{B}^7$} be two virtual layers sharing one WP block.
For the instance displayed in Figure \ref{figure: two_instances} (b),
we can have {\color{black}$\mathbb{B}^7=\{16, 17, \underline{5}, 19\} \cup \{6, 14, \underline{5}, 4\}$},
where the shared block is block 5, and the two virtual layers are $\mathbb{B}^5_1 = \{16, 17, 5, 19\}$ and $\mathbb{B}^5_2 = \{6, 14, 5, 4\}$.
Note that both the two virtual layers satisfy P5.
{\color{black}
The priorities of the other three stacks after removing blocks above both of the two virtual layers are respectively 2, 1 and 4 from the left to the right,
and the lowest one is 4, which is higher than the priority of the shared WP block.
}
Therefore, $\mathbb{B}^7$ satisfies the conditions of Property 7 ($\mathbb{B}^7$ satisfies P7 for short),
and we have $f^{\overline{\text{BG}}}(\mathbb{B}^7) \geq  2$.

{\color{black}
To identify a block subset $\mathbb{B}^7$ satisfying P7 for a given shared block, we have
designed an algorithm with $O(BS)$ time complexity. Details of this algorithm, referred to as A7, are presented in Appendix \ref{Appendix_Alg_P7}.}

\noindent \textbf{Remark:}\\
The consideration of BG moves has been shown in Property 1, which is rather straightforward
due to the initially BP blocks.
Nevertheless, it is worth pointing out that the possible BG move presented in Theorem \ref{thm_new7}
is actually for an initially WP block, which definitely is not obvious.
This new understanding, as well as {\color{black}the following Theorem \ref{thm_new8}} and the consequently strong lower bound, is obtained through a deeper analysis and a more involved reasoning on a particular structure.
Hence, we believe that, with the support from our general framework, a more comprehensive understanding on the BRP and stronger lower bounds can be expected by studying more sophisticated structures and deriving richer insights.

{\color{black}
\ \\
\noindent \textit{4) Property 8: A Forced Move due to Retrieval}\\[0.05in]
\indent
%
Different from those derived with layer concepts, in the following,  we introduce a new property that considers non-BG moves of some blocks, which is similar to Property 4. In fact, it is a generalization of Property 4.

\begin{theorem}
	\label{thm_new8}
	(Property 8)
	Consider the initial configuration $\mathbb{C}$ where a block (say block $i$) is piled in a stack (say stack $s$).
	First pick some block(s) from the above of block $i$, which are with priorities lower than that of block $i$, to form the first block subset.
	Then pick at most one WP block from each of the other $S - 1$ stacks, which is again with a priority lower than that of block $i$, to form the second block subset.
	
	Perform an experiment by first removing all the blocks except blocks in those two subsets from $\mathbb{C}$,
	and then relocating each block of the first subset once without considering the stack height limit or moving blocks of the second subset.
	
	If some of the relocated block(s) cannot be transformed to be WP in any of such experiments,
	we can conclude with respect to $\mathbb{C}$ that either
	(1) at least one BB move has to be implemented on blocks of the first subset,
	or (2) at least one GB or GG move has to be implemented on blocks of the second subset.
\end{theorem}

\begin{nproof}	
	It is clear that to retrieve block $i$, blocks of the first subset must be moved away from stack $s$.
	Since they are all with priorities lower than that of block $i$, they can only be relocated.
	The condition of Property 8 ensures that some of these block(s) have to be implemented with BB moves if blocks of the second subset are not moved beforehand.
	If blocks of the second subset are moved beforehand, then they can only be relocated, since all of them are also with priorities lower than that of block $i$, i.e., they should be retrieved after block $i$.
	As they are initially WP, they have to be implemented with GB or GG moves if moved beforehand.
	Therefore, at least one non-BG move has to be implemented on blocks of those two subsets.
	In conclusion, Theorem \ref{thm_new8} is proved.
\end{nproof}

\noindent
$\blacksquare$ \textbf{Illustration}:\\
Let $\mathbb{B}^8$ be a union of the two block subsets proposed in Property 8.
For the instance displayed in Figure \ref{figure: two_instances} (a),
blocks 6 and 7 are above block 2 in stack 3,
blocks 4 and 3 (with priorities lower than that of block 2) are respectively in stacks 1 and 4.
Hence, we can have $\mathbb{B}^8 = \{6, 7\} \cup \{4, 3\}$.
Since one of blocks 6 and 7 cannot become WP if both of them are relocated only once,
$\mathbb{B}^8$ satisfies the condition of Property 8 ($\mathbb{B}^8$ satisfies P8 for short),
and we have $f^{\overline{\text{BG}}}(\mathbb{B}^8) \geq  1$.
Moreover, blocks 11 and 12 are above block 8 in stack 1,
blocks 9 and 10 (with priorities lower than that of block 8) are respectively in stacks 3 and 4.
Hence, we can have another $\mathbb{B}^8 = \{11, 12\}\cup \{9, 10\}$.
Since one of blocks 11 and 12 cannot become WP if both of them are relocated only once,
this $\mathbb{B}^8$ also satisfies P8,
and we also have $f^{\overline{\text{BG}}}(\mathbb{B}^8) \geq  1$.

It is straightforward that Property 8 generalizes Property 4,
since a block subset satisfying P4 always satisfies P8, but not vice versa.
Therefore, we can have the following corollary.

\begin{corollary}
	\label{cor_8generalizes4}
	With the same configuration $\mathbb{C}$, Property 8 subsumes Property 4 as a special case.
\end{corollary}

To identify a block subset $\mathbb{B}^8$ satisfying P8 for a given block $i$, we have designed an algorithm with a time complexity of $O(B\log{S})$.
Details of this algorithm, referred to as A8, are presented in Appendix \ref{Appendix_Alg_P8}.
As proven in Appendix \ref{Appendix_Alg_P8}, if a block subset satisfying P4 is identified in the derivation of LB-N (otherwise LB-N trivially reduces to LB$_1$), at least one block subset satisfying P8 can be derived by A8.

We also would like to  note that a block subset satisfying P8 contains no less than $S$ blocks, and actually can always be reduced to contain exactly $S$ blocks (see Proposition A-2 presented in Appendix \ref{Appendix_Alg_P8}).
As argued in the next subsection, this observation could be useful in identifying more nontrivial block subsets to derive a stronger lower bound.
}

\subsection{A Stronger Lower Bound}

In this subsection, by fully making use of both existing and new properties,
we present a new lower bound that could be much stronger than all existing ones.
{\color{black}
Based on the general framework presented in Theorems \ref{thm_LB_overall} and \ref{thm_LB_overall2} and our demonstrations on interpreting previous lower bounds, it can be easily seen that different partitions of the complete block set $\mathbb{B}$ generally lead to different lower bounds.
Hence, deriving the strongest lower bound can be converted into a combinatorial optimization problem, assuming that we have an exact understanding on the necessary relocations of all types of block subsets.
Nevertheless, neither we have investigated all significant block subsets nor our understanding on necessary relocations is thorough. Under such a situation, we focus on four non-dominated properties, and design a fast heuristic procedure to
partition $\mathbb{B}$ into critical block subsets and to  compute a strong lower bound.

Specifically, Properties 1, 5/6, 7 and 8 will be explored and evaluated to design this new lower bound,
which is referred to as LB$_4$.
Note that Properties 2 and 3 are generalized by Property 5/6, and Property 4 is generalized by Property 8.
The basic idea of our procedure is to use Properties 5/6, 7 and 8 one by one following an appropriate order.
We first pick the maximum number of block subsets satisfying Property 7 using Algorithm A7,
since two relocations can be derived from a block subset containing $2S-1$ blocks,
less than the two block subsets containing $2S$ blocks.
We then pick the maximum number of block subsets satisfying Property 5 from the unpicked blocks using Algorithm A5 (or its modification A5*).
We finally pick the maximum number of block subsets satisfying Property 8 from the unpicked blocks using Algorithm A8.
In the end,  LB$_4$ is set to the sum of LB$_1$ and the number of necessary relocations arising from those block subsets.
Note that on top of Algorithm A5, we have a modified version A5* (see Appendix \ref{Appendix_Alg_P5}) to fully make use of the virtual layer concept. In our numerical study, we embed each of them within the overall algorithm for LB$_4$ to build two implementations. If their outputs are different, we simply take the larger one as the final LB$_4$.

The time complexity of the above procedure for LB$_4$ is $\mathcal{O}(B^2 S)$.
We reason it as follows.
(\romannumeral1) LB$_1$ is computed once,
Algorithm A5 (or A5*) is used at most $B / S$ times,
both A7 and A8 are used at most $B$ times.
(\romannumeral2) Their time complexities are respectively $\mathcal{O}(B)$, $\mathcal{O}(BS)$, $\mathcal{O}(BS)$ and less than $\mathcal{O}(B\log{S})$.
Following the calculation $B + B / S \times BS + B \times (BS + B\log{S}) = B^2 S + B^2 \log{S} + B^2 + B$,
we conclude the overall time complexity as $\mathcal{O}(B^2 S)$.

Similar to Properties 2 and 3 behind LB$_2$ and LB$_3$, Properties 5 and 7 can be directly applied to the eight BRP variants with individual moves.
Different from Property 4, Property 8 can also be directly applied without modification to those eight BRP variants, given that a target block is not specifically considered.
As a result, LB$_4$ can be directly applied to all the eight BRP variants with individual moves.
}

Based on Theorems \ref{thm_LB_overall} and \ref{thm_LB_overall2}, we next prove that LB$_4$ is a valid lower bound that dominates all existing ones.

\begin{theorem}
\label{thm_lb4}
LB$_4$ is a valid lower bound to $f(\mathbb{B})$, and it dominates LB$_1$, LB$_2$, LB$_3$ and LB-N.
\end{theorem}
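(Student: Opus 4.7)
The plan is to split the argument into two parts: first establishing validity, $f(\mathcal{B}) \geq \mathrm{LB4}$, and second establishing the dominance relations over LB1, LB2, LB3, and LB-N one by one. The central engine for both parts is the general framework of Theorems \ref{thm_LB_overall} and \ref{thm_LB_overall2}, which lets me aggregate bounds on moves of a specific type over disjoint block sets.

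For validity, let $\{\mathcal{B}^7_i\}_{i=1}^{n_7}$ and $\{\mathcal{B}^5_j\}_{j=1}^{n_5}$ denote the mutually disjoint block sets selected in steps~2 and~3 of the LB4 algorithm, and let $\mathcal{B}^4$ be the (possibly empty) block set identified by the embedded LB-N iteration on the residual configuration in step~4. Together with the remaining blocks, these three collections form a partition of $\mathcal{B}$. Applying Theorem \ref{thm_LB_overall2} to $f^{\overline{\mathrm{BG}}}(\mathcal{B})$ along any refinement of this partition, and applying Property \ref{proty_lb1} in its singleton form $f^{\mathrm{BG}}(\{b\}) \geq 1$ for every $b \in \mathcal{B}^1$ to lower-bound $f^{\mathrm{BG}}(\mathcal{B})$, Theorem \ref{thm_LB_overall} yields
\[
f(\mathcal{B}) \,\geq\, |\mathcal{B}^1| + \sum_{i=1}^{n_7} f^{\overline{\mathrm{BG}}}(\mathcal{B}^7_i) + \sum_{j=1}^{n_5} f^{\overline{\mathrm{BG}}}(\mathcal{B}^5_j) + f^{\overline{\mathrm{BG}}}(\mathcal{B}^4).
\]
Theorems \ref{thm_new7} and \ref{thm_new5} bound the first two sums by $2n_7$ and $n_5$ respectively, and Property \ref{proty_lb4} bounds the last term by the indicator that P4 holds in the residual configuration; summing reproduces LB4 exactly. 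One subtlety to verify is that invoking Property \ref{proty_lb4} on the residual configuration legitimately contributes a non-BG move to the original instance: since the residual is obtained by removing picked blocks together with everything above them, each surviving block retains its BP or WP status, so the non-BG move counted on the residual is distinct from those counted for the $\mathcal{B}^5_j$ and $\mathcal{B}^7_i$.

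For dominance, each existing bound is handled separately. LB1 is immediate since LB4 equals LB1 plus non-negative increments. LB2 follows because whenever the P2 condition holds, the top $1^{st}$ layer is itself a virtual layer satisfying P5, so step~2 or step~3 contributes at least $+1$. LB-N follows by cases: if no $\mathcal{B}^7$ or $\mathcal{B}^5$ is picked in steps~2--3, the residual configuration coincides with the original and step~4 reduces to the ordinary LB-N iteration, reproducing its contribution; otherwise, a picked set already contributes $+1$ or $+2$. LB3 is approached via Corollary \ref{cor_6generalizes3}: when the maximal top $k$ layers satisfy P3, each of those layers is itself a virtual layer satisfying P5, exhibiting $k$ mutually non-overlapping admissible $\mathcal{B}^5$ candidates.

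The hard part will be making the LB3 case airtight, because the algorithm commits to a maximum family of $\mathcal{B}^7$'s before searching for $\mathcal{B}^5$'s, and an aggressive $\mathcal{B}^7$ choice could in principle destroy several layer-shaped $\mathcal{B}^5$ candidates. I plan to close this gap with an exchange argument: every $\mathcal{B}^7$ is the union of two virtual layers sharing a single WP block, so swapping a $\mathcal{B}^7$ out of the selection frees exactly two virtual layers' worth of blocks (minus their shared block) and costs $-2$ to the running count $2n_7 + n_5$, while reinstating the two displaced $\mathcal{B}^5$ candidates contributes back $+2$. Starting from the feasible configuration $(n_7, n_5) = (0, k)$ furnished by the top $k$ layers via Corollary \ref{cor_6generalizes3} and performing such trades in either direction, the invariant $2 n_7 + n_5 \geq k$ is preserved; the maximality stipulations in steps~2 and~3 then ensure that the algorithm's actual output matches or exceeds the best such iterate, giving LB4 $\geq$ LB1 $+ k =$ LB3 and completing the proof.
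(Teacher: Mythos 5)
Your overall architecture---decompose $f(\mathcal{B})$ into a BG part and a non-BG part via Theorems \ref{thm_LB_overall} and \ref{thm_LB_overall2}, distribute the non-BG part over the disjoint picked sets, and then count---is the same as the paper's, and your handling of LB1, LB2 and LB-N matches the paper's argument in substance. For LB3 dominance the paper uses a direct per-stack count (at most $m'$ top-$k$ blocks per stack are consumed by the $\mathcal{B}^7$'s, so $m\geq\lceil m'/2\rceil$ and $n\geq k-m'$, whence $2m+n\geq k$) rather than your exchange argument; your version is the same idea but, as stated, leans on the picked $\mathcal{B}^7$'s ``freeing exactly two layers,'' which is not literally true when a $\mathcal{B}^7$ draws its blocks from more than two of the top $k$ layers---the per-stack count is the cleaner way to close that.

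The genuine gap is in your validity step: you credit each $\mathcal{B}^7_i$ with $f^{\overline{\text{BG}}}(\mathcal{B}^7_i)\geq 2$, citing Theorem \ref{thm_new7}. Theorem \ref{thm_new7} does not say that. Its conclusion is a disjunction: either two non-BG moves occur on the two layers, or one GB move \emph{and one BG move} occur on the shared WP block. In the second branch only one non-BG move (the GB one) is guaranteed on $\mathcal{B}^7_i$; the other guaranteed move is a BG move, which by definition contributes nothing to $f^{\overline{\text{BG}}}$. So the inequality $\sum_i f^{\overline{\text{BG}}}(\mathcal{B}^7_i)\geq 2n_7$ is not implied, and with it you lose the $+2$ per $\mathcal{B}^7_i$. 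The paper recovers the factor of two by routing that possible BG move through the \emph{other} side of the decomposition: it lower-bounds $f^{\text{BG}}(\mathcal{B})$ by $f^{\text{BG}}(\mathcal{B}^1\cup\{b_1,\dots,b_m\})$, where $b_1,\dots,b_m$ are the shared WP blocks, and then pairs the term for $\{b_i\}$ with the non-BG count on $\mathcal{B}^7_i$ so that each pair contributes at least $2$. Because each $b_i$ is initially well placed, its BG move is not already counted in $|\mathcal{B}^1|$, which is exactly what makes the bookkeeping close. Your decomposition omits the shared WP blocks from the BG term altogether, so this repair is unavailable as written; you would need to restructure the partition to include them.
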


{\color{black}
\begin{nproof}
Let $\mathbb{B}^1$ be the collection of BP blocks in the initial configuration.
Let $\mathbb{B}^7_1, \cdots, \mathbb{B}^7_l$ be the picked subsets satisfying P7,
and $b_1, \cdots, b_l$ be the corresponding shared WP blocks.
Let $\mathbb{B}^5_1, \cdots, \mathbb{B}^5_m$ be the picked subsets satisfying P5,
and $\mathbb{B}^8_1, \cdots, \mathbb{B}^8_n$ be the picked subsets satisfying P8.
Note that, $\mathbb{B}^7_1, \cdots, \mathbb{B}^7_l, \mathbb{B}^5_1, \cdots, \mathbb{B}^5_m, \mathbb{B}^8_1, \cdots, \mathbb{B}^8_n$ are not overlapping with each other. According to Theorems \ref{thm_LB_overall} and \ref{thm_LB_overall2}, we have
\begin{align}
f(\mathbb{B}) \geq  & f^{\text{BG}} \left(\mathbb{B}\right) +
f^{\overline{\text{BG}}} \left(\mathbb{B}\right)
\notag\\
\geq  & f^{\text{BG}} \left(\mathbb{B}^1 \cup \{b_1, \cdots, b_l\}\right) +
\notag\\
& f^{\overline{\text{BG}}} \left(\mathbb{B}^7_1 \cup \cdots \cup \mathbb{B}^7_l \cup \mathbb{B}^5_1 \cup \cdots \cup \mathbb{B}^5_m \cup \mathbb{B}^8_1 \cup \cdots \cup \mathbb{B}^8_n\right)
\notag\\
\geq  & f^{\text{BG}} \left(\mathbb{B}^1\right) +
\sum_{i=1}^l {f^{\text{BG}} \left(\{b_i\}\right)} +
\notag\\
& \sum_{i=1}^{l} {f^{\overline{\text{BG}}} \left(\mathbb{B}^7_i\right)} +
\sum_{i=1}^{m} {f^{\overline{\text{BG}}} \left(\mathbb{B}^5_i\right)} +
\sum_{i=1}^{n} {f^{\overline{\text{BG}}} \left(\mathbb{B}^8_i\right)}
\notag\\
= & f^{\text{BG}} \left(\mathbb{B}^1\right) +
\sum_{i=1}^l {\left( f^{\text{BG}} \left(\{b_i\}\right) + f^{\overline{\text{BG}}} \left(\mathbb{B}^7_i\right) \right)} +
\notag\\
& \sum_{i=1}^{m} {f^{\overline{\text{BG}}} \left(\mathbb{B}^5_i\right)} + \sum_{i=1}^{n} {f^{\overline{\text{BG}}} \left(\mathbb{B}^8_i\right)} \notag\\
\geq  & |\mathbb{B}^1| + 2l + m + n\notag
\end{align}
which exactly gives LB$_4$ as a valid lower bound.
Assume that the top $k$ physical layers are identified in the derivation of LB$_3$.
According to the discussion after Corollary \ref{cor_6generalizes3} (as well as that in Appendix \ref{Appendix_Alg_P5}),
they satisfy Property P5/P6 and will definitely be identified by Algorithm A5.
Hence, we have $m {\rm{\geq}}  k {\rm{-}} 2l$, and
\begin{align}
\text{LB}_4 \geq  & |\mathbb{B}^1| + 2l + m + n
\geq  |\mathbb{B}^1| + 2l + (k - 2l) + n \notag\\
\geq  &|\mathbb{B}^1| + k = \text{LB}_3 \geq  \text{LB}_2 \geq  \text{LB}_1. \notag
\end{align}
Similarly, assume a subset satisfying P4 is found in the derivation of LB-N.
According to the discussion after Corollary \ref{cor_8generalizes4} (as well as Proposition A-1 in Appendix \ref{Appendix_Alg_P8}),
at least a subset satisfying P8 can be derived by Algorithm A8.
Hence we have
\begin{align}
\text{LB}_4 \geq  & \begin{cases}
|\mathbb{B}^1| + 1 = \text{LB-N}, & \text{if $2l+m=0$}\\
|\mathbb{B}^1|+2l + m\geq |\mathbb{B}^1|+1 = \text{LB-N}, & \text{otherwise}.
\end{cases}\notag
\end{align}
In conclusion, Theorem \ref{thm_lb4} is proved.
\end{nproof}}

\noindent$\blacksquare$ \noindent \textbf{Illustration}:\\
For the instance displayed in Figure \ref{figure: two_instances} (a),
{\color{black}
no subset $\mathbb{B}^7$ or $\mathbb{B}^5$ is picked,
but two subsets $\mathbb{B}_1^8 = \{6, 7\} \cup \{4, 3\}$ and $\mathbb{B}_2^8 = \{11, 12\} \cup \{9, 10\}$ are picked.
Therefore, $\text{LB}_4 = 5 + 2 \times 0 + 0 + 2 = 7$.}
For the instance displayed in Figure \ref{figure: two_instances} (b),
a subset {\color{black}$\mathbb{B}^7 = \{16, 17, \underline{5}, 19\} \cup \{6, 14, \underline{5}, 4\}$} is first picked,
two subsets $\mathbb{B}^5_1 = \{2, 12, 18, 8\}$ and {\color{black}$\mathbb{B}^5_2 = \{3, 10, 9, 7\}$}) are subsequently picked,
and no subset $\mathbb{B}^8$ is picked.
Therefore, LB$_4$ = {\color{black}9} + 2 $\times$ 1 + 1 $\times$ 2 + 0 = {\color{black}13}.
Given that LB$_1$, LB$_2$, LB$_3$ and LB-N for this instance are respectively {\color{black}9, 10, 11 and 10}, it
indicates that LB$_4$ is much stronger.

{\color{black}
\noindent \textbf{Remarks:} \\
We would like to highlight that, under the general framework presented in Theorems \ref{thm_LB_overall} and \ref{thm_LB_overall2}, the development of a strong lower bound can be standardized into two steps.
$(i)$ We discover some new structural properties from a block subset, and derive an insight on the involved necessary relocations and their move types.
$(ii)$ Based on the pool of those properties (and the corresponding block subsets), including existing ones and newly discovered one(s), we design an algorithm to
partition the complete block set $\mathbb{B}$ and to determine the necessary relocations associated with block subsets, aiming to maximize the total number of relocations collected over those subsets.}

\section{Exact Computational Methods}
\label{section_formulation}
In addition to the derivations of the lower bounds on the number of relocations,
we present in this section a new MIP model of the BRP that can be directly computed by an MIP solver. After observing that some strong MIP relaxations of the BRP can be computed quickly,
we develop an MIP formulation based exact algorithm that can further improve our solution capability.
As shown in Section \ref{section_experiments},
comparing to the state-of-the-art formulation,
the two approaches have significantly better computational performances.

{\color{black}
As previously mentioned, a particular system might have additional concerns and requirements, and MIP formulations are actually flexible and general to handle them.
This advantage is illustrated in the next section where results developed in this section are extended to accommodate several practical considerations that often occur in container yards or steel slab yards.  Unless otherwise stated, the BRP is referred to variant 9, a rather standard one, in the remainder of this paper. }

\subsection{A New MIP Formulation of the BRP}
\label{subsection_formulation1}

Because of specifications of the BRP, we note that all existing MIP formulations define 0-1 variables for each block regarding its dynamic position(s) among stacks during the whole retrieval process \cite{Caserta-2012-EJOR,  Petering-2013-EJOR, Silva-2018-EJOR}.
Clearly, real instances will incur large numbers of binary variables, which cause these formulations difficult to compute.
In this paper, we introduce 0-1 variables to define the adjacency relationship between a pair of blocks and lift-up and lift-down operations involved in a relocation move, without considering stacks in the retrieval process.
With this strategy, the number of variables can be reduced significantly, and our formulation, as shown in Section~\ref{section_experiments}, is computationally much more friendly than the state-of-the-art one.
Next, we introduce necessary notations for sets and parameters to support our model development. Recall that
$\mathbb{B}:=\{1,\ldots,B\}$ and $\mathbb{S}:=\{1,\ldots,S\}$ have been introduced to represent the sets of blocks and stacks, respectively.


${\color{black}\mathbb{B}'}=\mathbb{B}\cup \{B\text{+}1\}$: the extended set of blocks, noting that block $B\text{+}1$, a virtual block, represents the floor, regardless of stacks.

$\mathbb{C}\in \{0,1\}^{B\times (B\text{+}1)}$: the matrix representing the initial configuration of blocks. Specifically, $C_{ij}=1$ if block $i$ is piled directly upon block $j$, and 0 otherwise.

$H$: the height limit (in terms of blocks) of stacks, i.e., the maximum number of blocks can be piled on a stack.

{\color{black}
$h_i$: the height of (i.e., the vertical position of) block $i$ in $\mathbb{C}$.}

$L$: the lower bound of the number of relocations.

$T$: the number of relocation turns, where a relocation turn (turn in short) includes a relocation move and all the subsequent retrieval moves before the next relocation move \cite{Silva-2018-EJOR}.
It is set as an upper bound of the number of relocations.

${\color{black}\mathbb{T}}= \{1, 2, ..., T\}$: the set of relocation turns, which naturally
partitions the complete retrieval process into $T$ stages. For simplicity, we also use $0$ to denote the initial stage before any relocation.

With those notations, our BRP problem can be precisely stated as follows.
\begin{problem}
\label{prob_BRP}
Given a configuration $\mathbb{C}$  with $B$ blocks (of distinct priorities) piled on $S$ stacks with height limit $H$, and a crane that moves one block at a time and retrieves blocks from 1 to B sequentially.
Determine a sequence of moves with the least number of relocations to retrieve all blocks.
\end{problem}

Next, we define decision variables and present the complete MIP formulation.
As mentioned, 0-1 variables are introduced to describe the adjacency relationship between a pair of blocks and lift-up and lift-down operations involved in every relocation turn.
To facilitate an easy understanding, constraints and their interpretations are presented groupwise based on their connections.

\noindent $\blacksquare$ \textbf{Variables}:

{\color{black}$x_{ij}^t$}: equals 1 if block $i$ is piled directly upon block $j$ at the end of turn $t$, and 0 otherwise.

{\color{black}$\hat{y}_{ij}^t$}: equals 1 if block $i$ is directly relocated from (i.e., lifted-up from) block $j$ during turn $t$, and 0 otherwise. Note that $i \neq j$.

{\color{black}$\check{y}_{ij}^t$}: equals 1 if block $i$ is directly relocated to (i.e., lifted-down to) and piled upon block $j$ during turn $t$, and 0 otherwise. Note that $i \neq j$.

{\color{black}$z_{ij}^t$}: equals 1 if block $i$ is readily retrieved from the top of block $j$ during turn $t$, and 0 otherwise. Note that $i < j$.

{\color{black}$u_i^t$}: {\color{black}the height of block $i$ just after the lift-down move of the relocation in turn $t$.}

\noindent$\blacksquare$ \textbf{Objective function}:
{\color{black}
\begin{align}
& \min {\sum_{i \in \mathbb{B}} {\sum_{j \in \mathbb{B}' \backslash \{i\}} {\sum_{t \in \mathbb{T}} {\check{y}_{ij}^t}}}}
\tag{o1}
\label{obj_1}
\end{align}}

\noindent$\blacksquare$ \textbf{Constraints}:\\[0.05in]
$(\romannumeral1)$ Initial and dynamic relationships between blocks.
\begin{align}
& x_{ij}^0 = C_{ij}
\ \
\forall i \in \mathbb{B};\
j \in \mathbb{B}' \backslash \{i\}
\tag{x1}
\label{cons_x_1}\\
& x_{ij}^t = x_{ij}^{t\text{-}1}- \hat{y}_{ij}^t + \check{y}_{ij}^t
\ \
\forall i \in \mathbb{B};\
j \in \mathbb{B}',\ j \mathrm{<} i;\
t \in \mathbb{T}
\tag{x2}
\label{cons_x_2}\\
& x_{ij}^t = x_{ij}^{t\text{-}1}- \hat{y}_{ij}^t + \check{y}_{ij}^t - z_{ij}^t
\
\forall i \in \mathbb{B};
j \in \mathbb{B}', j \mathrm{>} i;
t \in \mathbb{T}
\tag{x3}
\label{cons_x_3}\\
& x_{ij}^{T} = 0
\ \
\forall i \in \mathbb{B};\
j \in \mathbb{B}' \backslash \{i\}
\tag{x4}
\label{cons_x_4}\\
& x_{ij}^t \in \{0,1\}
\ \
\forall i \in \mathbb{B};\
j \in \mathbb{B}' \backslash \{i\};\
t \in \mathbb{T} \cup \{0\}
\tag{x5}
\label{cons_x_5}
\end{align}
This set of constraints defines the dynamic adjacency relationship, due to lift-up and lift-down operations and retrieval moves in a relocation turn, between blocks $i$ and $j$ during the retrieval process.\\[0.05in]
%
%
%
$(\romannumeral2)$ Restrictions on the lift-up operation per turn.
\begin{align}
& \sum_{i \in \mathbb{B}} {\sum_{j \in \mathbb{B}' \backslash \{i\}} {\hat{y}_{ij}^t}} = 1
\ \
\forall t \in \mathbb{T},\ t \leq L
\tag{$\hat{\text{y}}$1}
\label{cons_y_up_1}\\
& \sum_{i \in \mathbb{B}} {\sum_{j \in \mathbb{B}' \backslash \{i\}} {\hat{y}_{ij}^t}} \leq \sum_{i \in \mathbb{B}} {\sum_{j \in \mathbb{B}' \backslash \{i\}} {\hat{y}_{ij}^{t\text{-}1}}}
\ \
\forall t \in \mathbb{T},\ t \mathrm{>} L
\tag{$\hat{\text{y}}$2}
\label{cons_y_up_2}\\
& \hat{y}_{ij}^t \leq x_{ij}^{t\text{-}1}
\ \
\forall i \in \mathbb{B};\
j \in \mathbb{B}' \backslash \{i\};\
t \in \mathbb{T}
\tag{$\hat{\text{y}}$3}
\label{cons_y_up_3}\\
& \sum_{j \in \mathbb{B}' \backslash \{i\}} {\hat{y}_{ij}^t} \mathrm{\leq }
\sum_{j \in \mathbb{B}' \backslash \{i\}} {x_{ij}^{t\text{-}1}} \mathrm{-}
\sum_{j \in \mathbb{B} \backslash \{i\}} {x_{ji}^{t\text{-}1}}
\ \
\forall i \in \mathbb{B}; t \in \mathbb{T}
\tag{$\hat{\text{y}}$4}
\label{cons_y_up_4}\\
& \hat{y}_{ij}^t \in \{0,1\}
\ \
\forall i \in \mathbb{B};\
j \in \mathbb{B}' \backslash \{i\};\
t \in \mathbb{T}
\tag{$\hat{\text{y}}$5}
\label{cons_y_up_5}
\end{align}
Constraints in (\ref{cons_y_up_1})-(\ref{cons_y_up_2}) guarantee exactly one lift-up operation is performed per turn among the first $L$ turns, and no more than one can be done in any subsequent turn.
Also, \eqref{cons_y_up_2} suggests that, given that the number of necessary relocations could be less than $T$, empty turns will be arranged after actual relocation turns.
Constraints in (\ref{cons_y_up_3})-(\ref{cons_y_up_4}) ensure the feasibility of a lift-up operation using the block relationship from the previous turn.
Note that, as the virtual block representing the floor is introduced, the right-hand-side of (\ref{cons_y_up_4}) equals 0 if block $i$ is not the topmost one in a stack.
\\[0.05in]
%
%
%
$(\romannumeral3)$ Restrictions on the lift-down operation per turn.
\begin{align}
& \sum_{j \in \mathbb{B}' \backslash \{i\}} {\check{y}_{ij}^t} = \sum_{j \in \mathbb{B}' \backslash \{i\}} {\hat{y}_{ij}^t}\ \
\forall i \in \mathbb{B};\
t \in \mathbb{T}
\tag{$\check{\text{y}}$1}
\label{cons_y_dn_1}\\
& {\color{black}\sum_{j \in \mathbb{B} \backslash \{i\}} {\check{y}_{ji}^t} \leq \sum_{j \in \mathbb{B}' \backslash \{i\}} {x_{ij}^{t\text{-}1}} - \sum_{j \in \mathbb{B} \backslash \{i\}} {\hat{y}_{ji}^t}}\ \
{\color{black}\forall i \in \mathbb{B};
t \in \mathbb{T}}
\tag{$\check{\text{y}}$2}
\label{cons_y_dn_2}\\
& {\color{black}\sum_{j \in \mathbb{B}} {\check{y}_{j(B\text{+}1)}^t} \leq 1 - \sum_{j \in \mathbb{B}} {\hat{y}_{j(B\text{+}1)}^t}\ \
\forall t \in \mathbb{T}
\tag{$\check{\text{y}}$3}
\label{cons_y_dn_3}}\\
& \sum_{j \in \mathbb{B} \backslash \{i\}} {\check{y}_{ji}^t} \mathrm{\leq}
\sum_{j \in \mathbb{B}' \backslash \{i\}} {x_{ij}^{t\text{-}1}} \mathrm{-}
\sum_{j \in \mathbb{B} \backslash \{i\}} {x_{ji}^{t\text{-}1}}\ \
\forall i \in \mathbb{B}, t \in \mathbb{T}
\tag{$\check{\text{y}}$4}
\label{cons_y_dn_4}\\
& \sum_{j \in \mathbb{B}} {\check{y}_{j(B\text{+}1)}^t} \leq S - \sum_{j \in \mathbb{B}} {x_{j(B\text{+}1)}^{t\text{-}1}}\ \
\forall t \in \mathbb{T}
\tag{$\check{\text{y}}$5}
\label{cons_y_dn_5}\\
&  \check{y}_{ij}^t \in \{0,1\}\ \
\forall i \in \mathbb{B};\
j \in \mathbb{B}' \backslash \{i\};\
t \in \mathbb{T}
\tag{$\check{\text{y}}$6}
\label{cons_y_dn_6}
\end{align}
Constraints in {\color{black}(\ref{cons_y_dn_1})-(\ref{cons_y_dn_3})} guarantee that the lift-down and lift-up operations of a turn should be performed with the same block, but upon two different blocks.
Constraints in (\ref{cons_y_dn_4})-(\ref{cons_y_dn_5}) represent that a relocated block can only be piled on the top of a topmost block or directly on the floor.
\\[0.05in]
%
%
%
$(\romannumeral4)$ Restrictions on the retrieval moves per turn.
\begin{align}
& {\color{black} z_{ij}^t \leq x_{ij}^{t\text{-}1} - \hat{y}_{ij}^t + \check{y}_{ij}^t
\ \
\forall i \mathrm{\in} \mathbb{B};\
j \in \mathbb{B}',\ j > i;\
t \mathrm{\in} \mathbb{T}}
\tag{z1}
\label{cons_z_1}\\
& \sum_{j \in \mathbb{B}', j \mathrm{>} i} {z_{ij}^t} {\rm \leq}
\sum_{j \in \mathbb{B}' \backslash \{i\}} {x_{ij}^{t\text{-}1}} \text{--}
\sum_{j \in \mathbb{B}, j \mathrm{>} i} {(x_{ji}^{t\text{-}1} \text{--} \hat{y}_{ji}^t {\text{+}} \check{y}_{ji}^t)}
\
\forall i \mathrm{\in} \mathbb{B};
t \mathrm{\in} \mathbb{T}
\tag{z2}
\label{cons_z_2}\\
& \sum_{j \in \mathbb{B}', j \mathrm{>} i} {\sum_{\tau \in \mathbb{T}, \tau \leq t} {z_{ij}^{\tau}}} \mathrm{\leq }
\sum_{j \in \mathbb{B}', j \mathrm{>} i \text{-} 1} {\sum_{\tau \in \mathbb{T}, \tau \leq t} {z_{(i\text{-}1)j}^{\tau}}}
\
\forall i \mathrm{\in} \mathbb{B} \backslash \{1\};
t \mathrm{\in} \mathbb{T}
\tag{z3}
\label{cons_z_3}\\
& z_{ij}^t \in \{0,1\}\ \
\forall i \in \mathbb{B};\
j \in \mathbb{B}',\ j \mathrm{>} i;\
t \in \mathbb{T}
\tag{z4}
\label{cons_z_4}
\end{align}
{\color{black}Constraints in (\ref{cons_z_1})-(\ref{cons_z_2}) ensure that a retrieval move is implementable, i.e., a target block is not blocked by a lower prioritized block.}
Constraints in (\ref{cons_z_3}) guarantee that the prioritized retrieval list is followed throughout the retrieval process.
\\[0.05in]
%
%
%
$(\romannumeral5)$ Restrictions on the stack height limit per turn.
\begin{align}
& u_i^t \mathrm{\geq}
u_j^t \mathrm{+}
1 \mathrm{-}
{\color{black}H (1 \mathrm{-}
x_{ij}^{t\text{-}1} \mathrm{+}
\hat{y}_{ij}^t \mathrm{-}
\check{y}_{ij}^t)}
\
\forall i \mathrm{\in} \mathbb{B};
j \mathrm{\in} \mathbb{B} \backslash \{i\};
t \mathrm{\in} \mathbb{T}
\tag{u1}
\label{cons_u_1}\\
& u_i^t \in {\color{black}\{1,\dots, H\}} \hspace{0.4cm}
\forall i \in \mathbb{B};\
t \in \mathbb{T}
\tag{u2}
\label{cons_u_2}
\end{align}

Overall, our MIP formulation, which is referred to as $\text{BRP-m3}$ following the convention in the literature, is summarized as follows.
{\color{black}
\begin{align}
\text{BRP-m3}:\ & \text{(\ref{obj_1})}
\notag\\
s.t.\ \ & \text{(\ref{cons_x_1}), \ldots, (\ref{cons_x_5}),
(\ref{cons_y_up_1}) \ldots (\ref{cons_y_up_5}),
(\ref{cons_y_dn_1}) \ldots (\ref{cons_y_dn_6})}
\notag\\
& \text{(\ref{cons_z_1}) \ldots (\ref{cons_z_4}),
	(\ref{cons_u_1}), (\ref{cons_u_2})}
\notag
\end{align}

\noindent$\blacksquare$ \textbf{Model Modifications for Other Variants}:\\
With some minor modifications, BRP-m3 actually can handle 7 more variants out of 16 ones. We first modify it to consider the unrestricted BRP with duplicate priorities.


As a block's ID does not indicate its priority, we let $i_{\rm{p}}$ represent block $i$'s priority.
Also, let $\mathbb{B}_k =\{i\in \mathbb{B}: i_{\rm{p}}=k\}$, i.e., the set of blocks of priority $k$.
Clearly, a BRP variant with distinct priorities is a special case with  $|\mathbb{B}_k|=1$ for $1\leq k\leq B$.
Note that blocks' priorities only affect their retrieval moves, not other moves.
Hence, we only need to replace constraints in (\ref{cons_x_2})-(\ref{cons_x_3}) and (\ref{cons_z_1})-(\ref{cons_z_4}) of BRP-m3 by the following ones.
\begin{align}
& x_{ij}^t \mathrm{=} x_{ij}^{t\text{-}1} \mathrm{-} \hat{y}_{ij}^t \mathrm{+} \check{y}_{ij}^t
\hspace{0.62cm}
\forall i \in \mathbb{B};\
j \in \mathbb{B}',\ j_{\rm{p}} \mathrm{<} i_{\rm{p}};\
t \in \mathbb{T}
\tag{x2*}\\
& x_{ij}^t \mathrm{=} x_{ij}^{t\text{-}1} \mathrm{-} \hat{y}_{ij}^t \mathrm{+} \check{y}_{ij}^t \mathrm{-} z_{ij}^t
\ \
\forall i {\rm{\in}} \mathbb{B};
j {\rm{\in}} \mathbb{B}' \backslash \{i\}, j_{\rm{p}} \mathrm{\geq} i_{\rm{p}};
t {\rm{\in}} \mathbb{T}
\tag{x3*}\\
& z_{ij}^t \mathrm{\leq} x_{ij}^{t\text{-}1} \mathrm{-} \hat{y}_{ij}^t \mathrm{+} \check{y}_{ij}^t
\hspace{0.6cm}
\forall i \mathrm{\in} \mathbb{B};\
j \mathrm{\in} \mathbb{B}' \backslash \{i\},\ j_{\rm{p}} \mathrm{\geq} i_{\rm{p}};\
t \mathrm{\in} \mathbb{T}
\tag{z1*}\\
& \sum_{j \in \mathbb{B}' \backslash \{i\}, j_{\rm{p}} \mathrm{\geq} i_{\rm{p}}} {z_{ij}^t} {\rm \leq}
\sum_{j \in \mathbb{B}' \backslash \{i\}} {x_{ij}^{t\text{-}1}} \mathrm{-}
\sum_{j \in \mathbb{B}, j_{\rm{p}} \mathrm{>} i_{\rm{p}}} {(x_{ji}^{t\text{-}1} \mathrm{-} \hat{y}_{ji}^t \mathrm{+} \check{y}_{ji}^t)}
\notag\\
& \hspace{6.15cm} \forall i \mathrm{\in} \mathbb{B};
t \mathrm{\in} \mathbb{T}
\tag{z2*}\\
& |\mathbb{B}_{(i_{\rm{p}}\text{-}1)}| \mathrm{\times} \sum_{j \in\mathbb{B}' \backslash \{i\}, j_{\rm{p}} \mathrm{\geq} i_{\rm{p}}} {\sum_{\tau \in\mathbb{T}, \tau \mathrm{\leq} t} {z_{ij\tau}}} \mathrm{\leq}
\notag\\
& \sum_{k \in \mathbb{B}_{(i_{\rm{p}}\text{-}1)}} {\sum_{j \in\mathbb{B}' \backslash \{k\}, j_{\rm{p}} \mathrm{\geq} k_{\rm{p}}} {\sum_{\tau \in\mathbb{T}, \tau \mathrm{\leq} t} {z_{k j \tau}}}}
\
\forall i \mathrm{\in} \mathbb{B}, i_{\rm{p}} \mathrm{\geq} 2;
t \mathrm{\in} \mathbb{T}
\tag{z3*}\\
& z_{ij}^t \in \{0,1\}
\hspace{1.55cm}
\forall i \in \mathbb{B};\
j \in \mathbb{B}',\ j_{\rm{p}} \mathrm{\geq} i_{\rm{p}};\
t \in \mathbb{T}
\tag{z4*}
\end{align}

To consider restricted BRPs where only forced moves are allowed,
we introduce the following constraints to ensure that we will keep relocating blocks above a target block until it is retrieved.
\begin{align}
& \sum_{j \in \mathbb{B}' \backslash \{i\}} {\hat{y}_{ij}^t} \mathrm{\geq}
\sum_{j \in \mathbb{B} \backslash \{i\}} {\hat{y}_{ji}^{t\text{-}1}} \mathrm{-}
\sum_{j \in \mathbb{B}' \backslash \{i\}, j_{\rm{p}} \mathrm{\geq} i_{\rm{p}}} {z_{ij}^{t\text{-}1}}
\notag\\
& \hspace{4.92cm} \forall i \in \mathbb{B};\
t \in \mathbb{T} \backslash \{1\}
\tag{$\hat{y}$6}\\
& \hat{y}_{i(B+1)}^t = 0
\ \ \
\hspace{3.5cm} \forall i \in \mathbb{B};\
t \in \mathbb{T}
\tag{$\hat{y}$7}
\end{align}

Regarding a BRP variant with the incomplete retrieval, it can be equivalently converted to one with duplicate priorities and the complete retrieval, by assigning no-to-retrieve blocks with the same lowest priority and ignoring their retrieval moves in the resulting solution.
Overall, with the aforementioned discussions, it can be seen that BRP-m3 can be modified to solve  all eight BRP variants with individual moves.
Certainly, we recognize that those BRP-m3 based models might not be strong since variant-specific properties could be used to strengthened them to achieve a better computational performance.
As our focus is on variant 9 and on BRP-m3, we leave it as a future research direction.

\ \\
\noindent$\blacksquare$ \textbf{Model Simplifications}:\\
Next, we present a few rather simple modifications that can effectively reduce our MIP model's complexity, while do not hurt its correctness.
One is that a couple of groups of discrete variables are relaxed
into continuous ones.
Another one
is that some constraints are actually
dominated by others so that they can be eliminated. The last one is that a set of constraints can be included to ensure that block 1 is retrieved as soon as
possible, which decreases the size of feasible set.

\begin{proposition}
\label{prop_new_brm_m3}
BRP-m3 can be simplified without sacrificing its correctness by the following three modifications.\\	
$(a)$ Relax integer variables $x_{ij}^t$ and $u_i^t$ to be continuous ones;\\
$(b)$ Eliminate constraints in (\ref{cons_y_up_3}), (\ref{cons_z_1}) and (\ref{cons_z_2});\\
$(c)$ Augment BRP-m3 with  the following equalities.
\begin{align}
& z_{1i}^t \mathrm{=} \hat{y}_{j1}^t
\ \
\forall i \mathrm{\in} \mathbb{B}' {\rm\ and\ } C_{1i} \mathrm{=} 1;\
j \mathrm{\in} \mathbb{B} {\rm\ and\ } C_{j1} \mathrm{=} 1;\
t \mathrm{\in} \mathbb{T}
\tag{e1}
\label{cons_e_1}\\
& \sum_{t \in \mathbb{T}} {\hat{y}_{i1}^t} = 1
\ \
\forall i \in \mathbb{B} {\rm\ and\ } C_{i1} = 1
\tag{e2}
\label{cons_e_2}\\
& \check{y}_{i1}^t = 0
\ \
\forall i \in \mathbb{B} \backslash \{1\};\
t \in \mathbb{T}
\tag{e3}
\label{cons_e_3}\\
& x_{i1}^t =\hat{y}_{i1}^t = 0
\ \
\forall i \in \mathbb{B} \backslash \{1\} {\rm\ and\ } C_{i1} = 0;\
t \in \mathbb{T}
\tag{e4}
\label{cons_e_4}\\
& \hat{y}_{1i}^t = \check{y}_{1i}^t = 0
\ \
\forall i \in \mathbb{B}' \backslash \{1\};\
t \in \mathbb{T}
\tag{e5}
\label{cons_e_5}\\
& x_{1i}^t = z_{1i}^t = 0
\ \
\forall i \in \mathbb{B}' \backslash \{1\} {\rm\ and\ } C_{1i} = 0;\
t \in \mathbb{T}
\tag{e6}
\label{cons_e_6}\\
& u_1^t = h_1
\ \
\forall t \in \mathbb{T}
\tag{e7}
\label{cons_e_7}
\end{align}	
\end{proposition}

Detailed proofs are shown in Appendix \ref{Appendix_Proof-Proposition1}.
As those modifications can be made easily and are computationally effective, we adopt them as default in our study and refer BRP-m3 to the simplified MIP formulation in the remainder of this paper.}

\ \\
\noindent \textbf{Remarks:} \\
$(\romannumeral1)$
Note that parameter $T$ is needed in our formulation (as well as in all other MIP formulations {\color{black}for the unrestricted BRP}), while the minimum number of relocations is unknown beforehand.  In our numerical study,  we adopt the optimal value of the restricted BRP to derive this bound, which
has been utilized in the state-of-the-art formulation BRP-m2 \cite{Silva-2018-EJOR}.
Moreover, a stronger lower bound $L$ is preferred as shown in (\ref{cons_y_up_1}).
Hence, the new stronger lower bound presented in Section \ref{section_LB} can be directly applied.\\
$(\romannumeral2)$ One non-trivial issue associated with traditional MIP formulations is symmetricity, noting that stacks are identical.
This issue could incur a heavy computational burden for an MIP solver. Nevertheless, given that our variable definitions do not depend on stacks, this issue is naturally removed from our formulation.
Moreover, comparing to the state-of-the-art formulation BRP-m2, we note that our BRP-m3 is of a smaller size with less numbers  of discrete variables and inequalities.
{\color{black}
Let $\lambda = \min\{H,B - S + 1\} \times SBT$.
In BRP-m2, there are about $4\lambda$ discrete variables and $2\lambda$ inequalities, while the corresponding numbers in BRP-m3 are $2.5B^2T$ and $B^2T$ (or simply $4BT$ if the height limit is ignored).
Indeed, for the non-trivial BRP where $1<S< B$, we have $S(S-1)< B(S-1)$, which is equivalently to
$B< BS-S^2+S=(B - S + 1)S$.
Together with the fact that $HS>B$ (otherwise relocations are impossible),
it is clear that $\lambda$ is strictly greater than $B^2T$.
Hence, BRP-m3 is of a smaller size.
}
\\
$(\romannumeral3)$ When the stack height limit is negligible or not imposed, e.g., stacks of steel plates in a steel factory,
all variables $u_{it}$ and  constraints in (\ref{cons_u_1})-(\ref{cons_u_2}) can simply be eliminated from BRP-m3.
Indeed, we note that  $\text{BRP-m3}$ has a much stronger performance compared to existing ones under such a situation.

\subsection{An MIP Relaxation Based Iterative Procedure}
\label{subsection_formulation2}
We note in our study that some MIP formulations of the relaxed BRP problems have superior computational performances.
This observation inspires us to make use of those relaxations within an algorithmic framework to compute the original BRP.
To this end, we present a study that develops an MIP relaxation based iterative procedure to derive exact BRP solutions.
To the best of our knowledge, no similar algorithm design has been reported in the literature on the BRP.

Let a \textit{direct blockage}  be a blockage formed by two blocks piled in a way such that the lower prioritized one is directly on top of the higher prioritized one in the same stack.
For the instance displayed in Figure \ref{figure: two_instances} (a) (see Section \ref{section_LB}), block 7 and block 2 form a direct blockage, but block 6 and block 2 do not.
Unlike the original BRP that focuses on relocation moves, the next problem, which is a relaxation to the BRP, considers the number of direct blockages.
Recalling that $L$ is a lower bound to the BRP, we define the new problem as follows.

\begin{problem}
\label{prob_BRPR}
Given a configuration $\mathbb{C}$ with $B$ blocks (of distinct priorities) piled on $S$ stacks with height limit $H$,
and a crane that moves one block at a time and retrieves blocks from 1 to $B$ sequentially.
Determine a sequence of moves to retrieve all blocks in a way such that after its $L$ relocations (and all applicable retrieval moves), the sum of $L$ and the number of direct blockages in the resulting configuration is minimized.
\end{problem}

We next show that it is a relaxation to the original BRP.

\begin{theorem}
\label{thm_relaxation}
Problem \ref{prob_BRPR} is a relaxation of the BRP defined in Problem \ref{prob_BRP}.
\end{theorem}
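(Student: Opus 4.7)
The plan is to exhibit, for every feasible solution of the BRP in Problem~\ref{prob_BRP}, a corresponding feasible solution of Problem~\ref{prob_BRPR} whose objective value is no larger. Fix any BRP-feasible move sequence, say one using $N^* \geq L$ relocations (so the first $L$ relocations are well-defined, since $L$ is a lower bound). Truncate this sequence to its first $L$ relocations together with the interleaved retrieval moves up to that point, and take this truncation as a candidate solution for Problem~\ref{prob_BRPR}; let $d$ denote the number of direct blockages in the configuration it produces. It suffices to show that $L + d \leq N^*$, because this inequality applied to an optimal BRP sequence will upper bound the Problem~\ref{prob_BRPR} optimum by the BRP optimum.

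The key lemma on which everything hinges is that the count of direct blockages is unchanged by retrieval moves and decreases by at most one under a single relocation. For a retrieval of the current target block $b$, the block $b'$ directly below $b$ (if any) has strictly lower priority than $b$, so the adjacent pair $(b,b')$ is not itself a direct blockage by definition; removing $b$ therefore neither eliminates an existing direct blockage nor creates a new one. For a relocation that lifts a topmost block $b$ from above $b'$ and sets it down either on the floor or on a topmost block $b''$ in another stack, the only pre-existing direct blockage that can vanish is $(b,b')$ and the only one that can be created is $(b,b'')$; hence the net change in the direct-blockage count is at least $-1$.

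Given this lemma, a one-line telescoping finishes the argument. Let $d_k$ denote the count of direct blockages after the $k$-th relocation of the fixed BRP sequence together with all subsequent retrievals that precede the $(k{+}1)$-th relocation. Retrievals contribute $0$ and each relocation contributes at least $-1$, so $d_{k+1} \geq d_k - 1$ for $k = L, L{+}1, \ldots, N^*{-}1$. Since the sequence terminates with every block retrieved, $d_{N^*} = 0$, yielding $d = d_L \leq N^* - L$ and therefore $L + d \leq N^*$, which is exactly what is required.

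The main obstacle I anticipate is the per-move bookkeeping rather than any deep combinatorics: I must verify that the truncated prefix is genuinely a Problem~\ref{prob_BRPR}-admissible solution (in particular that ``all applicable retrieval moves'' after the $L$-th relocation are exactly those performed under the BRP's own retrieval discipline), and that the lemma handles cleanly the boundary cases in which the relocated block is placed on the floor (no $b''$ exists) or its origin stack becomes empty (no $b'$ exists). These are routine checks that do not require any new structural insight beyond the direct-blockage definition and the observation that the current target always has the highest priority among the remaining blocks.
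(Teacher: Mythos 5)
Your proof is correct and takes essentially the same route as the paper's: both reduce the claim to checking that any BRP-feasible sequence (truncated after its first $L$ relocations and the applicable retrievals) serves as a Problem~\ref{prob_BRPR} solution whose value $L+d$ is at most the number $N^*$ of BRP relocations. The only difference is that the paper obtains $d \leq N^* - L$ by citing the result of \cite{Scholl-2018-IJPR} that the number of direct blockages lower-bounds the number of remaining relocations, whereas you prove that fact from scratch via the per-move telescoping argument (retrievals leave the direct-blockage count unchanged, each relocation lowers it by at most one), which makes your write-up self-contained.
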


\begin{nproof}
We prove it according to the following criteria:
(1) any feasible solution of Problem \ref{prob_BRP} is a feasible one to Problem \ref{prob_BRPR},
and (2) its {\color{black}objective} value with respect to Problem \ref{prob_BRPR} is less than or equal to that with respect to Problem \ref{prob_BRP}.

On one hand, we consider a feasible solution of the BRP with $U$ relocations and other retrieval moves. Since we have $L \leq U$, it is naturally feasible to Problem \ref{prob_BRPR}.

On the other hand, according to  \cite{Scholl-2018-IJPR},
the number of direct blockages in any configuration is a lower bound of the number of necessary relocations for that configuration.
So, for any  move sequence feasible to the BRP,
after implementing its first $L$ relocations and applicable retrieval moves,
the number of direct blockages in the resulting configuration is less than or equal to the number of relocations in the remaining sequence.
Therefore, the second criterion is satisfied.
In conclusion, Problem \ref{prob_BRPR} is a relaxation of the BRP.
\end{nproof}

In the following, we present an MIP formulation for Problem \ref{prob_BRPR}, which is referred to as BRP-m3R.
Because of its connection and similarity to Problem \ref{prob_BRP}, we re-use variables and constraints for model development.
Nevertheless, we highlight that $\mathbb{T}$ is defined with respect to $\{1, 2, ..., L\}$.
Also, to better describe our iterative procedure, we keep the constant $L$ in the objective function of this formulation.
{\color{black}
\begin{align}
\text{BRP-m3R}:\ & L+\min {\sum_{i \in \mathbb{B}} {\sum_{j \in \mathbb{B}, j < i} {x_{ij}^L}}}
\tag{o2}
\label{obj_2}\\
s.t.\ \ & \text{constraints of BRP-m3 $\backslash$ \{(\ref{cons_x_4}), (\ref{cons_y_up_2}), (\ref{cons_e_2})\}}
\notag
\end{align}}
%
%
%
\indent Next, we develop the following algorithm (in pseudo code) where BRP-m3R is computed and updated over and over to strengthen the lower bound of BRP-m3,
and finally produces the strongest lower bound,  i.e., the optimal value of  BRP-m3.

\noindent\rule[0.05\baselineskip]{3.5in}{0.5pt}
Algorithm IS: Basic Iterative Scheme\\
\noindent\rule[0.45\baselineskip]{3.5in}{0.5pt}

\begin{algorithmic}[1]


\State {$L \leftarrow 0$, $L' \leftarrow \text{a lower bound of BRP-m3}$}
\State {\textbf{while} $L < L'$}
\State {\hspace{0.5cm}$L \leftarrow L'$}
\State {\hspace{0.5cm}update set $\mathbb{T}$ in BRP-m3R and solve BRP-m3R}
\State {\hspace{0.5cm}$L' \leftarrow \text{the optimal value of \text{BRP-m3R}}$}
\State {\Return {an optimal solution of the last \text{BRP-m3R}}}
\end{algorithmic}
\noindent\rule[0.8\baselineskip]{3.5in}{0.5pt}

\begin{theorem}
\label{thm_relaxation}
The IS algorithm converges to an optimal solution of the BRP in a finite number of iterations.
\end{theorem}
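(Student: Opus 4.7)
The plan is to prove finite termination and then optimality of the returned solution, both built around the identity $\mathrm{OPT}(\text{BRP-m3R}) = L + m(L)$, where $m(L)$ denotes the minimum achievable number of direct blockages in the residual configuration after exactly $L$ relocations (and any interleaved retrievals) starting from $\mathbb{C}$.

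First I would verify that each iterate satisfies $L \le L' \le \mathrm{OPT}(\text{BRP})$. The left inequality is immediate from $m(L) \ge 0$. The right inequality follows from the preceding theorem stating that Problem~\ref{prob_BRPR} relaxes Problem~\ref{prob_BRP}, together with the Scholl bound invoked there (the number of direct blockages in any configuration is at most the number of necessary relocations from it): any BRP-optimal plan, truncated to its first $L$ relocations, yields a feasible BRP-m3R solution with objective at most $\mathrm{OPT}(\text{BRP})$. Because $L'$ is integer valued (the BRP-m3R objective is the integer $L$ plus a sum of $0/1$ variables), and because the while-loop body strictly increases $L$ by at least one, the sequence of $L$'s terminates in at most $\mathrm{OPT}(\text{BRP})$ iterations.

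At termination $L = L'$, so $m(L) = 0$: the residual configuration contains no direct blockage. The main obstacle is the structural bridge linking this to BRP feasibility, namely the claim that zero direct blockages implies zero BP blocks. I would argue this by contrapositive. Suppose some remaining block $b$ is BP, so its stack contains a block of strictly smaller ID somewhere below $b$. Descend from $b$ inside that stack, and let $b^\star$ be the first block encountered whose ID is strictly smaller than $\mathrm{ID}(b)$; let $c$ be the block directly above $b^\star$ in the stack. By construction every block strictly above $b^\star$ encountered in the descent has ID at least $\mathrm{ID}(b)$, hence $\mathrm{ID}(c) \ge \mathrm{ID}(b) > \mathrm{ID}(b^\star)$, and the pair $(c, b^\star)$ is a direct blockage, contradicting $m(L) = 0$. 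With no BP block remaining, the residual retrieval of blocks $1, 2, \ldots, B$ can be completed merely by popping stack tops, without any further relocation. The BRP-m3R output therefore extends to a fully BRP-feasible plan using exactly $L$ relocations; since the invariant $L \le \mathrm{OPT}(\text{BRP})$ is preserved throughout, we conclude $L = \mathrm{OPT}(\text{BRP})$ and the returned plan is optimal. Everything beyond the descent argument in this paragraph is routine accounting (monotonicity, integrality, boundedness).
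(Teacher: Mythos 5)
Your proposal is correct and follows essentially the same route as the paper's proof: the optimal value of \text{BRP-m3R} is sandwiched between $L$ and the BRP optimum, it strictly increases each iteration over integers, and at the fixed point the residual configuration admits a relocation-free retrieval, so the $L$-relocation plan is feasible and hence optimal. The only substantive addition is your explicit descent argument showing that zero direct blockages forces zero BP blocks, a step the paper asserts without proof (``all blocks \ldots are simply retrievable''); that detail is a genuine and welcome tightening, but it does not change the overall structure of the argument.
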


\begin{nproof}
Note that in this iterative procedure, we first solve \text{BRP-m3R}, i.e., the formulation of Problem \ref{prob_BRPR}, with $L$ relocations to optimality.
If the optimal  value equals $L$, then we get a feasible solution of the BRP,
since all blocks, including those in the remaining configuration, have been
retrieved or are simply retrievable. As $L$ is a lower bound of the BRP,
the optimal value of the current \text{BRP-m3R} is that of   BRP-m3.
Otherwise, we increase $L$ to that optimal value, update \text{BRP-m3R}, and
then resolve it. Hence, the lower bound $L$  always increases before reaching optimality.  Given that the number of relocations in the BRP is finitely bounded (e.g., the closed-form upper bound of the BRP in \cite{Caserta-2012-EJOR}),  it follows naturally that the algorithm converges to an optimal solution in a finite number of iterations.
\end{nproof}

It is worth mentioning that although \text{BRP-m3R} will be computed possibly several times in this iterative scheme, the total solution time could be much less than that of  BRP-m3 if (i) \text{BRP-m3R} is easy to solve and (ii) the initial lower bound
$L$ is tight. Indeed, if the strong lower bound derived in Section \ref{section_LB} is applied, generally only a couple of iterations are needed.
Moreover, the aforementioned algorithm, referred to as IS, can be enhanced by a few simple techniques listed below.  As a result,
as demonstrated in Section~\ref{section_experiments}, our overall solution {\color{black}capability} can be further
improved.

\noindent \textbf{Remarks:}\\
Two fast heuristics to generate initial solutions for Problem \ref{prob_BRPR} at the beginning of each iteration have been designed to support a commercial solver with a fast computation.
Also, note that BRP-m3 (BRP-m3R, respectively) without {\color{black}the height limit} is a relaxation to that with {\color{black}the height limit}.
Given that the former is much easier to compute, a practical strategy is to derive its optimal solution and verify whether {\color{black}the height limit} is violated over relocation turns.
If not, that optimal solution is also optimal to the latter one (i.e., with {\color{black}the height limit}). Indeed, as shown in our computational experiments, this is the case among the majority of testing instances.
If violated, a reparation heuristic is designed to convert that solution into a feasible one, and might again be optimal.
The overall enhanced IS algorithm in pseudo code is presented in Appendix \ref{Appendix_Enhanced-Iterative-Scheme}.
Moreover, an upper bound of the number of relocations is not needed in our IS algorithms.

{\color{black}
\section{Industrial Considerations and Flexible Modeling}
\label{section_customizations}
Although we have introduced 4 major features to define 16 different BRP variants, an industrial system often has more practical restrictions or concerns due to its particular configurations, cost specifications, or working environment. For example, from a safety point of view, it is discouraged to have very heavy containers piled above light ones in a container yard. Also, in addition to
the number of relocations, practitioners care about the energy consumption in the movements \cite{Lopez-2019-CIE}, especially the vertical ones, due to containers' large weights.
As shown in this section, such practical factors and complexities actually can be flexibly captured by customizing  the standard MIP formulation BRP-m3. The resulting
formulations, which are again mixed integer programs, can be readily computed by any professional MIP solver. As professional solvers are public available and manipulating those solvers is relatively easy, the flexibility and general applicability of MIP formulations offer a great advantage to industrial practitioners to handle various concerns and requirements.


%
%
%
%
%
%
%

\subsection{BRP with Penalty Coefficients}
For 16 typical BRP variants, they treat all relocation moves equally, regardless the differences among blocks. Nevertheless,
to protect motors, bearings and ropes or chains of a crane, it is desired to reduce the relocation moves associated with heavy blocks. Moreover,
a common situation in a container yard is that the weight distribution inside one container might not be even, whose relocations could cause damages to equipment. Under those situations, practitioners would like to assign different penalty coefficients
to relocation moves such that  moves of some blocks (e.g., those aforementioned blocks) should be heavily penalized.
Note that it is similar to the non-uniform relocations discussed in \cite{Galle-2018-EJOR} where they build and compute an MIP model for the restricted BRP.

To reflect this consideration, we just need to modify the objective function of BRP-m3. Let $d_i$ be the penalty coefficient of a relocation move on block $i$.  The modified MIP formulation, denoted by
BRP-m3-PC, is
\begin{align}
\text{BRP-m3-PC}:\ & \min {\sum_{i \in \mathbb{B}}
{\left(d_i \sum_{j \in \mathbb{B}' \backslash \{i\}}
	{\sum_{t \in \mathbb{T}}
		{\check{y}_{ij}^t}}\right)}}
\notag\\
s.t.\ & \text{constraints of BRP-m3}\notag
\end{align}
%

\subsection{BRP Considering Energy Consumptions}

Moving heavy blocks consumes a large amount of gas or electricity, especially when performing the vertical lift-up and lift-down operations, that is costly.
So, many practitioners care not only the number of relocations, but also the energy consumption in the retrieval process.
To reflect this consideration in our modeling, we augment BRP-m3 with new variables, constraints, and a modified objective function.
Specifically, new constraints and variables are introduced to capture the movements of blocks, and the objective function is modified to jointly minimize those two terms in terms of their weighted sum.
As vertical moves (by the hoist) typically demand much more energy than horizontal ones (by the trolley on the bridge), we consider the former ones in this augmented formulation.

To facilitate our understanding,
Figure \ref{figure: vertical_movements} illustrates all vertical movements  of a block during 6 consecutive turns, where an upwards arrow indicates a lift-up operation and a downwards arrow indicates a lift-down operation.
According to this figure, this block is initially piled at tier 3 in turn 0, first relocated  to tier 2 in some other stack in turn 2, staying there in turn 3, relocated again to tier 4 in turn 4, staying there in turn 5,
and finally is retrieved in turn 6.
In total, there are three lift-up operations and three lift-down operations.
In Figure \ref{figure: vertical_movements}, two important facts are noted.
First, distances of the first lift-up and the last lift-down operations are predetermined by the block's initial position, which are independent of our relocation decisions.
Second, all other moves appear in pairs, i.e., a lift-down operation is always followed by a lift-up one, and those two operations involve the same vertical distance.
Hence, assuming that energy consumption is proportion to the moving distance, we just need to include paired moves and the associate distances in the augmented formulation.

\begin{figure}[!t]
\begin{center}
	\includegraphics[scale = 0.9]{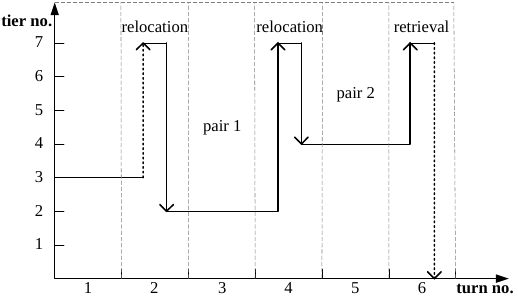}
	\caption{{\color{black}Vertical movements of a block during 6 turns}}
	\label{figure: vertical_movements}
\end{center}
\end{figure}

Let $v_i^t$ represents the vertical distance, in terms of the number of tiers, for the lift-down operation of block $i$ in turn $t$, and $c_i$ ($=\check{c}_i+\hat{c}_i$)  be the energy cost of one tier movement (where
$\check{c}_i$ and $\hat{c}_i$ are one tier costs of lift-down and lift-up movements on block $i$, respectively). Then, the objective function is updated as the following to represent the weighted sum over the number of relocations and the cost of
energy consumption.
\begin{align}
& \min \ {\alpha \sum_{i \in \mathbb{B}} {\sum_{j \in \mathbb{B}' \backslash \{i\}} {\sum_{t \in \mathbb{T}} {\check{y}_{ij}^t}}} +
\beta \sum_{i \in \mathbb{B}} {\sum_{t \in \mathbb{T}} c_iv_i^t}}
\tag{o3}
\label{obj_3}
\end{align}
where $0<\alpha<1$ is the weight parameter and $\beta=\frac{1-\alpha}{\max_{i \in \mathbb{B}} {c_i (H' - 1)}}$ normalizes all moving distances with $H'$ being the maximum height that a block can be lifted up to.
We next introduce a set of new constraints to define $v^t_i$.
\begin{align}
& v_i^t {\rm \geq} H' {\rm -} u_i^t {\rm -} (H' {\rm -} 1) \left(1 {\rm -} \sum_{j \in \mathbb{B}' \backslash \{i\}} {\check{y}_{ij}^t}\right)
\forall i \in \mathbb{B};
t \in \mathbb{T}
\tag{c1}
\label{cons_c_1}\\
& v_i^t \in [0, H'-1]\ \
\forall i \in \mathbb{B};\
t \in \mathbb{T}
\tag{c2}
\label{cons_c_2}
\end{align}

Note that, the value of $u_i^t$ in BRP-m3 is equal to or greater than the actual height of block $i$. Since it is unbounded from above, the constraints in (\ref{cons_c_1}) are very likely to be trivial.
To address this issue, we introduce the following constraints to force $u_i^t$ to be equal to the actual height of block $i$.
\begin{align}
& u_i^t \leq u_j^t + 1 + (H - 2) (1 - x_{ij}^{t\text{-}1} + \hat{y}_{ij}^t - \check{y}_{ij}^t)
\notag\\
& \hspace{3.9cm} \forall i \in \mathbb{B};\
j \in \mathbb{B} \backslash \{i\};\
t \in \mathbb{T}
\tag{c3}
\label{cons_c_3}\\
& u_i^t \leq 1 + (H - 1) (1 - x_{i(B+1)}^{t\text{-}1} + \hat{y}_{i(B+1)}^t - \check{y}_{i(B+1)}^t)
\notag\\
& \hspace{5.65cm} \forall i \in \mathbb{B};\
t \in \mathbb{T}
\tag{c4}
\label{cons_c_4}
\end{align}

Overall, the augmented formulation of BRP-m3 considering energy consumption,  referred to as BRP-m3-EC, is summarized as follows.
\begin{align}
\text{BRP-m3-EC}:\ & \text{(\ref{obj_3})}\notag\\
s.t.\ & \text{constraints of BRP-m3}
\notag\\
\ & \text{(\ref{cons_c_1}), \ldots, (\ref{cons_c_4})}
\notag
\end{align}

As for BRP-m3-PC and BRP-m3-EC, note that they derive optimal solutions (i.e., move sequences) with up to $T$ relocation turns. Unlike the standard BRP-m3 that simply minimizes the number of relocations, we would like to set parameter $T$ to a value larger than that of BRP-m3. By doing so, those formulations have a large search space
and probably lead to move sequences serving
our needs better. Certainly, practitioners can select an appropriate value for $T$ to avoid unreasonably many relocations.

\subsection{BRP Subject to Stacking Restrictions}
As mentioned, some stacking restrictions should be followed in the retrieval process \cite{Bruns-2016-EJOR}.
For example, very heavy containers should not be piled upon light ones in the container yard, or very long slabs should not be piled upon short ones in the slab yard.
Those restrictions actually are easily handled by adding some constraints to BRP-m3.

Let $\mathbb{B}_i^{\times}$ be the set of blocks upon which block $i$ is not allowed to pile. Then the modified formulation, referred to as BRP-m3-SR, is as follows.
\begin{align}
\text{BRP-m3-SR}:\ & \min {\sum_{i \in \mathbb{B}} {\sum_{j \in \mathbb{B}' \backslash \{i\}} {\sum_{t \in \mathbb{T}} {\check{y}_{ij}^t}}}}
\notag\\
s.t.\ & \text{constraints of BRP-m3} \notag\\
& x_{ij}^{t\text{-}1} - \hat{y}_{ij}^t + \check{y}_{ij}^t = 0\ \
\forall i \in \mathbb{B};\
j \in \mathbb{B}_i^{\times};\
t \in \mathbb{T} \notag
\end{align}
where $\mathbb{B}'$ is the extended block set (including the floor) defined in Section \ref{section_formulation}.

Due to the stacking restrictions, the minimum number of relocations of the new model could be larger than that of the standard BRP-m3, which suggests that a new upper bound is needed to set parameter $T$.
Note that it can be easily addressed, as any heuristic can be used to derive an upper bound.



\subsection{BRP Considering Retrieval Pace}
In some practical scenarios, a given retrieval pace of blocks should be satisfied. For example, in the steel factory, it is desirable to have the relocation moves and the retrieval moves mixed evenly. Otherwise, there could be just many relocation moves but no retrieval ones over a long period. Since no slab is delivered to the next stage in that period, a smooth production cannot be guaranteed \cite{Tang-2010-COR}. One solution is to limit the number of relocations before retrieving every block.
%

Like BRP-m3-SR, we just need to add some constraints to meet this requirement.
Let $T_i^{\text{max}}$ be the maximum number of relocations allowed before the retrieval of block $i$.  Then
the modified formulation, referred to as BRP-m3-RP, is as follows.
\begin{align}
\text{BRP-m3-RP}:\ & \min {\sum_{i \in \mathbb{B}} {\sum_{j \in \mathbb{B}' \backslash \{i\}} {\sum_{t \in \mathbb{T}} {\check{y}_{ij}^t}}}}
\notag\\
s.t.\ & \text{constraints of BRP-m3}
\notag\\
\ & \sum_{j \in \mathbb{B}', j > i}\ {\sum_{\tau \leq T_i^{\text{max}}} {z_{ij}^{\tau}}} = 1\ \ \forall i \in \mathbb{B}
\notag
\end{align}
Similar to BRP-m3-SR, the minimum number of relocations might be larger than that of BRP-m3. Again, a simple heuristic can be designed to derive an upper bound to set parameter $T$.
}


\section{Computational Results}
\label{section_experiments}

In this section, we report  our numerical results of the lower bounds, MIP formulations, and the IS algorithms.
Following the tradition in \cite{Caserta-2012-EJOR,Petering-2013-EJOR,Silva-2018-EJOR}, our test bed includes 13 groups, with 40 instances per group, from an instance set generated in \cite{Caserta-2011-ORS}.
Note that those instances are represented in
``a-b'' format with ``a'' denoting the current stack height, i.e., the number of blocks per stack, and ``b'' denoting the number of stacks.
Also, for each instance, we consider two situations, where there is no height limit and the height limit is set to $a+2$ \cite{Tricoire-2018-COR}.

%
All experiments are carried out on a desktop computer with the Windows 10 Professional 64-bit operating system, 32 GB RAM, and an Intel Core i7 7700 CPU with four 3.6-4.2 GHz cores and eight threads. %
Algorithms are implemented in C++ using CPLEX 12.61 and compiled with the Visual Studio 2013 C++ compiler.
For {\color{black}all instances}, the time limit is set to 3,600 seconds, the number of threads used is set to {\color{black}8}, and other parameters are in default settings. When the time limit is reached before obtaining an optimal solution, we set the solution time to 3,600 seconds in our report.

\subsection{Strength of Lower Bounds}
In this subsection, we compare our new lower bound, i.e., LB$_4$, with respect to existing ones appearing in the literature,
i.e., LB$_1$, LB$_2$, LB$_3$ and LB-N.
To be fair, we remove all retrievable blocks in the initial configurations before computing.
Otherwise, the strength of LB$_2$ and LB$_3$ will be weakened.

The first comparison  is displayed in  Figure \ref{figure: results_compare-LB-WH-rGap},
which reports the average relative gaps between the lower bounds and the actual optimal values,
i.e., the average of $(\textrm{opt-LB})/\textrm{opt}$ with ``opt'' being the optimal value and ``LB'' be one of the aforementioned lower bounds.
{\color{black}
We mention that the optimal values are computed by running the codes downloaded from \url{https://sites.google.com/site/shunjitanaka/brp} developed by Tanaka and Mizuno \cite{Tanaka-2018-COR}.}
The numerical results clearly confirm that our LB$_4$ often has much smaller relative gaps,
and strictly dominates all existing lower bounds for the test bed.
Moreover, for instances with larger height limits and less stacks,
the dominance of LB$_4$ is more significant.

{\color{black}
\begin{figure}[!t]
\begin{center}
	\includegraphics[scale = 0.9]{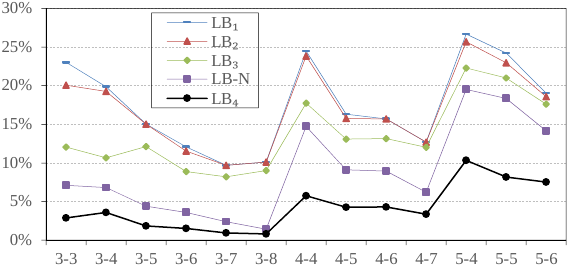}
	\caption{Average relative optimality gaps on instances with height limits}
	\label{figure: results_compare-LB-WH-rGap}
\end{center}
\end{figure}}

Our second comparison is on  the largest differences between the lower bounds and the actual optimal values in every group,
i.e., $\textrm{opt}-\textrm{LB}$, as shown in Figure \ref{figure: results_compare-LB-WH-aGap}.
Obviously, the overall trend in Figure \ref{figure: results_compare-LB-WH-aGap} largely agrees with that in Figure \ref{figure: results_compare-LB-WH-rGap},
showing LB$_4$ outperforms other lower bounds.
We highlight two more points.
The first one is that existing lower bounds are actually close to each other and demonstrate similar patterns. %
Nevertheless, LB$_4$ could be very different from them.
Note that LB$_4$ could be 4 relocations smaller than LB$_1$ in group 5-4, while other lower bounds are not more than 2 relocations than LB$_1$.
Another one is that,  in the most of worst cases, LB$_4$ is less than the optimal value by just a couple of relocations.
This observation is critical to our IS algorithms, which indicates that their convergences generally can be achieved in only a couple of iterations if LB$_4$ is adopted for initializations.

\begin{figure}[!t]
\begin{center}
\includegraphics[scale = 0.9]{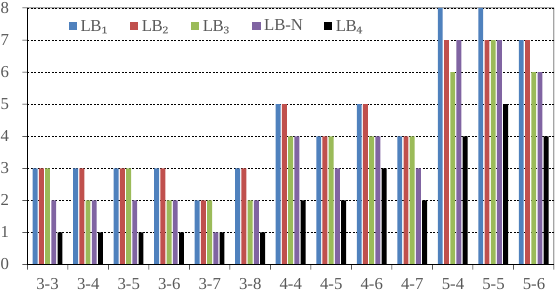}
\caption{Maximum absolute optimality gaps on instances with height limits}
\label{figure: results_compare-LB-WH-aGap}
\end{center}
\end{figure}

In Figure \ref{figure: results_compare-LB-WH-pOpt}, we finally present and  compare the percentages of instances over which the lower bounds are equal to the optimal values.
Again, our LB$_4$ has a clearly better performance over all existing lower bounds over all instances. In particular, for instances of small scales, our LB$_4$ is very likely  to be the optimal value.
Certainly, with {\color{black}larger height limits and more stacks}, such possibility becomes smaller, which can be explained by the increasing complexity of larger instances.

\begin{figure}[!t]
\begin{center}
\includegraphics[scale = 0.9]{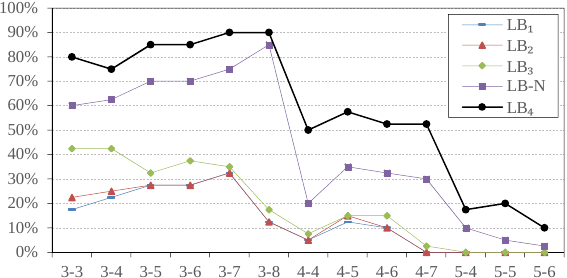}
\caption{Optimality percentages on instances with height limits}
\label{figure: results_compare-LB-WH-pOpt}
\end{center}
\end{figure}

\subsection{Computational Results Without a Height Limit}
In this subsection, we present the performance data of our computational methods, and benchmark with those of BRP-m2, which has been shown  to be the most effective one in the literature \cite{Silva-2018-EJOR}.
For BRP-m2, rather than trivially setting the height limit to $B$, we set it to $B - S+1$, which can be easily argued that spreading irretrievable  blocks among stacks is always preferred.

Table \ref{table: results_compare-m3-m2-NH} provides and compares CPLEX performance data of MIP formulations BRP-m2 (i.e., m2) and BRP-m3 (i.e., m3).
Columns ``\#Feasible'' and ``\#Optimal'' give the numbers of instances with feasible and optimal solutions before the time limit, respectively.
Column ``Time(s)'' represents the average solution time in seconds on all instances of each group.
As we use the time limit as the solution time when an instance cannot be solved, the average solution time could be misleading and biased against BRP-m3.
To do a fair comparison, we include two more columns ``Time*(s)'' and ``\#Nodes*'', which provide the average solution time and the average number of B\&B nodes over instances solved to optimality by BRP-m2.

\begin{table}[!t]
\centering
\caption{Comparison of BRP-m2 and BRP-m3 Without a Height Limit}
\label{table: results_compare-m3-m2-NH}%
\renewcommand\tabcolsep{2pt}
\begin{tabular}{ccrrrrrrccrccccc}
\hline
\multirow{2}[4]{*}{Case} &      & \multicolumn{2}{c}{\#Feasible} &      & \multicolumn{2}{c}{\#Optimal} &      & \multicolumn{2}{c}{Time(s)} &      & \multicolumn{2}{c}{Time*(s)} &      & \multicolumn{2}{c}{\#Nodes*} \bigstrut\\
\cline{3-4}\cline{6-7}\cline{9-10}\cline{12-13}\cline{15-16}         &      & \multicolumn{1}{c}{m2} & \multicolumn{1}{c}{m3} &      & \multicolumn{1}{c}{m2} & \multicolumn{1}{c}{m3} &      & m2   & m3   &      & m2   & m3   &      & m2   & m3 \bigstrut\\
\hline
3-3  &      & 40   & 40   &      & 40   & 40   &      & \multicolumn{1}{r}{2.2 } & \multicolumn{1}{r}{0.2 } &      & \multicolumn{1}{r}{2.1 } & \multicolumn{1}{r}{0.2 } &      & \multicolumn{1}{r}{149 } & \multicolumn{1}{r}{7 } \bigstrut[t]\\
3-4  &      & 40   & 40   &      & 40   & 40   &      & \multicolumn{1}{r}{169.8 } & \multicolumn{1}{r}{1.4 } &      & \multicolumn{1}{r}{168.6 } & \multicolumn{1}{r}{1.3 } &      & \multicolumn{1}{r}{2046 } & \multicolumn{1}{r}{168 } \\
3-5  &      & 37   & 40   &      & 34   & 40   &      & \multicolumn{1}{r}{893.0 } & \multicolumn{1}{r}{21.2 } &      & \multicolumn{1}{r}{416.7 } & \multicolumn{1}{r}{1.3 } &      & \multicolumn{1}{r}{3333 } & \multicolumn{1}{r}{65 } \\
3-6  &      & 27   & 40   &      & 19   & 39   &      & \multicolumn{1}{r}{2181.8 } & \multicolumn{1}{r}{129.7 } &      & \multicolumn{1}{r}{583.7 } & \multicolumn{1}{r}{1.4 } &      & \multicolumn{1}{r}{2757 } & \multicolumn{1}{r}{24 } \\
3-7  &      & 24   & 40   &      & 11   & 38   &      & \multicolumn{1}{r}{2851.6 } & \multicolumn{1}{r}{213.4 } &      & \multicolumn{1}{r}{878.1 } & \multicolumn{1}{r}{1.5 } &      & \multicolumn{1}{r}{3924 } & \multicolumn{1}{r}{1 } \\
3-8  &      & 6    & 40   &      & 0    & 35   &      & \multicolumn{1}{r}{3600.0 } & \multicolumn{1}{r}{565.1 } &      & -    & -    &      & -    & - \\
4-4  &      & 27   & 40   &      & 19   & 39   &      & \multicolumn{1}{r}{2297.5 } & \multicolumn{1}{r}{328.9 } &      & \multicolumn{1}{r}{857.7 } & \multicolumn{1}{r}{4.0 } &      & \multicolumn{1}{r}{3813 } & \multicolumn{1}{r}{118 } \\
4-5  &      & 7    & 38   &      & 3    & 28   &      & \multicolumn{1}{r}{3463.6 } & \multicolumn{1}{r}{1322.0 } &      & \multicolumn{1}{r}{1777.7 } & \multicolumn{1}{r}{1.6 } &      & \multicolumn{1}{r}{3425 } & \multicolumn{1}{r}{0 } \\
4-6  &      & 3    & 34   &      & 2    & 18   &      & \multicolumn{1}{r}{3491.1 } & \multicolumn{1}{r}{2290.8 } &      & \multicolumn{1}{r}{1416.2 } & \multicolumn{1}{r}{7.3 } &      & \multicolumn{1}{r}{1707 } & \multicolumn{1}{r}{38 } \\
4-7  &      & 0    & 26   &      & 0    & 10   &      & \multicolumn{1}{r}{3600.0 } & \multicolumn{1}{r}{2922.9 } &      & -    & -    &      & -    & - \\
5-4  &      & 5    & 31   &      & 3    & 14   &      & \multicolumn{1}{r}{3364.5 } & \multicolumn{1}{r}{2541.9 } &      & \multicolumn{1}{r}{456.3 } & \multicolumn{1}{r}{1.1 } &      & \multicolumn{1}{r}{1049 } & \multicolumn{1}{r}{0 } \\
5-5  &      & 0    & 13   &      & 0    & 2    &      & \multicolumn{1}{r}{3600.0 } & \multicolumn{1}{r}{3434.7 } &      & -    & -    &      & -    & - \\
5-6  &      & 0    & 4    &      & 0    & 2    &      & \multicolumn{1}{r}{3600.0 } & \multicolumn{1}{r}{3483.5 } &      & -    & -    &      & -    & - \\
sum  &      & 216  & 426  &      & 171  & 345  &      & -    & -    &      & -    & -    &      & -    & - \bigstrut[b]\\
\hline
\end{tabular}%
\end{table}%

Based on the results in Table \ref{table: results_compare-m3-m2-NH}, we note that our new MIP formulation BRP-m3 has a superior computational performance over the known best one BRP-m2.
We can find that, much more instances can be solved to feasibility or optimality using formulation BRP-m3 than using BRP-m2, especially for difficult instance groups.
Regarding the computational time, our new formulation has a drastically stronger power. Especially for instances that can be solved by BRP-m2, it is often the case that BRP-m3 solves to optimality a few hundred times quicker, with averagely two orders magnitude faster than BRP-m2.
A similar comparison can be found in the numbers of B\&B nodes.
{\color{black}
It is worth pointing out that for the six instances in 4-5 and 5-4 exactly solved by BRP-m2, our BRP-m3 model generates an optimal solution without any B\&B operation, while BRP-m2 averagely involves 2,237 B\&B nodes.}
Indeed, we observe that a large portion of instances in each group can be solved without any B\&B operation.
Hence, we believe that BRP-m3 is fundamentally different from existing ones, and is very close to the {\color{black}ideal formulation \cite{Wolsey-1998-Book}} of the BRP.

In Table \ref{table: results_compare-all-formulations-NH} we benchmark computational performances of IS algorithms, including the basic (i.e., IS) and the enhanced (i.e., IS*) implementations, with respect to BRP-m2 and BRP-m3.
{\color{black}Column ``\#Iters'' gives the average number of iterations over instances solved to optimality by each of the two IS algorithms.
Note that, for IS*, the number of iteration is set to zero when a heuristic directly produces an optimal solution. Hence, the average number of iterations  of IS* could  be less than 1.}
As can be seen, although those IS algorithms are iterative procedures, they perform similar or better than BRP-m3, noting that {\color{black} respectively 101\% and 108\%} more instances have been solved to optimality than BRP-m2.
Especially for the challenging instances in group 4-7 and 5-5, the enhanced IS* clearly outperforms regular MIP formulations by solving significantly more instances.
{\color{black} As for the number of iterations, it can be seen that those IS algorithms generally terminate in just a couple of iterations. Nevertheless, for some difficult instances, more iterations (up to 6) have been involved that demand a lot of computational time and degrade the overall performances.}

\begin{table}[!t]
	\centering
	\caption{Comparison of All Four Methods Without a Height Limit}
	\label{table: results_compare-all-formulations-NH}%
	\renewcommand\tabcolsep{2.7pt}	
	\begin{tabular}{crrrrrrccccccc}
		\hline
		\multirow{2}[4]{*}{Case} &      & \multicolumn{4}{c}{\#Optimal}  &      & \multicolumn{4}{c}{Time(s)} &      & \multicolumn{2}{c}{{\color{black}\#Iters}} \bigstrut\\
		\cline{3-6}\cline{8-11}\cline{13-14}         &      & \multicolumn{1}{c}{m2} & \multicolumn{1}{c}{m3} & \multicolumn{1}{c}{IS} & \multicolumn{1}{c}{IS*} &      & m2   & m3   & IS   & IS*  &      & IS   & IS* \bigstrut\\
		\hline
		3-3  &      & 40   & 40   & 40   & 40   &      & \multicolumn{1}{r}{2.2 } & \multicolumn{1}{r}{0.2 } & \multicolumn{1}{r}{0.3 } & \multicolumn{1}{r}{0.1 } &      & \multicolumn{1}{r}{1.2 } & \multicolumn{1}{r}{0.3 } \bigstrut[t]\\
		3-4  &      & 40   & 40   & 40   & 40   &      & \multicolumn{1}{r}{169.8 } & \multicolumn{1}{r}{1.4 } & \multicolumn{1}{r}{1.9 } & \multicolumn{1}{r}{1.5 } &      & \multicolumn{1}{r}{1.2 } & \multicolumn{1}{r}{0.5 } \\
		3-5  &      & 34   & 40   & 40   & 40   &      & \multicolumn{1}{r}{893.0 } & \multicolumn{1}{r}{21.2 } & \multicolumn{1}{r}{29.8 } & \multicolumn{1}{r}{47.5 } &      & \multicolumn{1}{r}{1.2 } & \multicolumn{1}{r}{0.3 } \\
		3-6  &      & 19   & 39   & 39   & 39   &      & \multicolumn{1}{r}{2181.8 } & \multicolumn{1}{r}{129.7 } & \multicolumn{1}{r}{151.6 } & \multicolumn{1}{r}{204.6 } &      & \multicolumn{1}{r}{1.1 } & \multicolumn{1}{r}{0.2 } \\
		3-7  &      & 11   & 38   & 38   & 39   &      & \multicolumn{1}{r}{2851.6 } & \multicolumn{1}{r}{213.4 } & \multicolumn{1}{r}{232.9 } & \multicolumn{1}{r}{170.7 } &      & \multicolumn{1}{r}{1.1 } & \multicolumn{1}{r}{0.2 } \\
		3-8  &      & 0    & 35   & 35   & 36   &      & \multicolumn{1}{r}{3600.0 } & \multicolumn{1}{r}{565.1 } & \multicolumn{1}{r}{543.2 } & \multicolumn{1}{r}{448.4 } &      & \multicolumn{1}{r}{1.0 } & \multicolumn{1}{r}{0.3 } \\
		4-4  &      & 19   & 39   & 37   & 37   &      & \multicolumn{1}{r}{2297.5 } & \multicolumn{1}{r}{328.9 } & \multicolumn{1}{r}{372.7 } & \multicolumn{1}{r}{320.4 } &      & \multicolumn{1}{r}{1.4 } & \multicolumn{1}{r}{0.8 } \\
		4-5  &      & 3    & 28   & 26   & 28   &      & \multicolumn{1}{r}{3463.6 } & \multicolumn{1}{r}{1322.0 } & \multicolumn{1}{r}{1410.5 } & \multicolumn{1}{r}{1234.2 } &      & \multicolumn{1}{r}{1.1 } & \multicolumn{1}{r}{0.6 } \\
		4-6  &      & 2    & 18   & 20   & 21   &      & \multicolumn{1}{r}{3491.1 } & \multicolumn{1}{r}{2290.8 } & \multicolumn{1}{r}{2165.7 } & \multicolumn{1}{r}{1763.2 } &      & \multicolumn{1}{r}{1.1 } & \multicolumn{1}{r}{0.4 } \\
		4-7  &      & 0    & 10   & 10   & 14   &      & \multicolumn{1}{r}{3600.0 } & \multicolumn{1}{r}{2922.9 } & \multicolumn{1}{r}{3173.3 } & \multicolumn{1}{r}{2517.0 } &      & \multicolumn{1}{r}{1.1 } & \multicolumn{1}{r}{0.5 } \\
		5-4  &      & 3    & 14   & 13   & 14   &      & \multicolumn{1}{r}{3364.5 } & \multicolumn{1}{r}{2541.9 } & \multicolumn{1}{r}{2595.2 } & \multicolumn{1}{r}{2482.2 } &      & \multicolumn{1}{r}{1.5 } & \multicolumn{1}{r}{1.2 } \\
		5-5  &      & 0    & 2    & 4    & 6    &      & \multicolumn{1}{r}{3600.0 } & \multicolumn{1}{r}{3434.7 } & \multicolumn{1}{r}{3334.0 } & \multicolumn{1}{r}{3167.4 } &      & \multicolumn{1}{r}{1.0 } & \multicolumn{1}{r}{0.3 } \\
		5-6  &      & 0    & 2    & 1    & 2    &      & \multicolumn{1}{r}{3600.0 } & \multicolumn{1}{r}{3483.5 } & \multicolumn{1}{r}{3560.3 } & \multicolumn{1}{r}{3464.9 } &      & \multicolumn{1}{r}{1.0 } & \multicolumn{1}{r}{0.5 } \\
		sum  &      & 171  & 345  & 343  & 356  &      & -    & -    & -    & -    &      & -    & - \bigstrut[b]\\
		\hline
	\end{tabular}%
\end{table}%

\subsection{Computational Results With Height Limits}
In this subsection, we present and analyze the performances of our computational methods on instances with {\color{black}height limits}.
Similar to Table \ref{table: results_compare-m3-m2-NH}, we first provide and compare in Table \ref{table: results_compare-m3-m2-WH} CPLEX performance data of MIP formulations BRP-m2 (i.e., m2) and BRP-m3 (i.e., m3).

Based on Table \ref{table: results_compare-m3-m2-WH}, we note again that BRP-m3 has a superior performance over BRP-m2.
Generally, on the instances with height limits, BRP-m3 is able to compute {\color{black}50\% and 64\%} more with feasible and optimal solutions respectively over BRP-m2.
Also, for instances exactly solved by BRP-m2, it is common that BRP-m3 solves to optimality {\color{black}4 to 	200} times quicker, with averagely {\color{black}50} times faster than BRP-m2.
A similar comparison can be found in the numbers of B\&B nodes.

\begin{table}[!t]
	\centering
	\caption{Comparison of BRP-m2 and BRP-m3 With Height Limits}
	\label{table: results_compare-m3-m2-WH}%
	\renewcommand\tabcolsep{2pt}
	\begin{tabular}{ccrrrrrrccrccccc}
		\hline
		\multirow{2}[4]{*}{Case} &      & \multicolumn{2}{c}{\#Feasible} &      & \multicolumn{2}{c}{\#Optimal} &      & \multicolumn{2}{c}{Time(s)} &      & \multicolumn{2}{c}{Time*(s)} &      & \multicolumn{2}{c}{\#Nodes*} \bigstrut\\
		\cline{3-4}\cline{6-7}\cline{9-10}\cline{12-13}\cline{15-16}         &      & \multicolumn{1}{c}{m2} & \multicolumn{1}{c}{m3} &      & \multicolumn{1}{c}{m2} & \multicolumn{1}{c}{m3} &      & m2   & m3   &      & m2   & m3   &      & m2   & m3 \bigstrut\\
		\hline
		3-3  &      & 40   & 40   &      & 40   & 40   &      & \multicolumn{1}{r}{1.2 } & \multicolumn{1}{r}{0.2 } &      & \multicolumn{1}{r}{1.2 } & \multicolumn{1}{r}{0.2 } &      & \multicolumn{1}{r}{83 } & \multicolumn{1}{r}{8 } \bigstrut[t]\\
		3-4  &      & 40   & 40   &      & 40   & 40   &      & \multicolumn{1}{r}{64.8 } & \multicolumn{1}{r}{3.5 } &      & \multicolumn{1}{r}{64.4 } & \multicolumn{1}{r}{3.4 } &      & \multicolumn{1}{r}{1861 } & \multicolumn{1}{r}{174 } \\
		3-5  &      & 40   & 40   &      & 38   & 40   &      & \multicolumn{1}{r}{455.2 } & \multicolumn{1}{r}{46.5 } &      & \multicolumn{1}{r}{290.0 } & \multicolumn{1}{r}{5.4 } &      & \multicolumn{1}{r}{3602 } & \multicolumn{1}{r}{79 } \\
		3-6  &      & 35   & 40   &      & 25   & 39   &      & \multicolumn{1}{r}{1755.2 } & \multicolumn{1}{r}{162.6 } &      & \multicolumn{1}{r}{623.2 } & \multicolumn{1}{r}{8.3 } &      & \multicolumn{1}{r}{3773 } & \multicolumn{1}{r}{69 } \\
		3-7  &      & 30   & 40   &      & 17   & 39   &      & \multicolumn{1}{r}{2482.6 } & \multicolumn{1}{r}{212.6 } &      & \multicolumn{1}{r}{809.2 } & \multicolumn{1}{r}{4.0 } &      & \multicolumn{1}{r}{3924 } & \multicolumn{1}{r}{0 } \\
		3-8  &      & 14   & 39   &      & 3    & 35   &      & \multicolumn{1}{r}{3562.3 } & \multicolumn{1}{r}{718.3 } &      & \multicolumn{1}{r}{3164.6 } & \multicolumn{1}{r}{22.4 } &      & \multicolumn{1}{r}{6261 } & \multicolumn{1}{r}{59 } \\
		4-4  &      & 31   & 40   &      & 25   & 37   &      & \multicolumn{1}{r}{1762.0 } & \multicolumn{1}{r}{515.7 } &      & \multicolumn{1}{r}{659.1 } & \multicolumn{1}{r}{21.8 } &      & \multicolumn{1}{r}{5548 } & \multicolumn{1}{r}{343 } \\
		4-5  &      & 13   & 35   &      & 6    & 21   &      & \multicolumn{1}{r}{3235.0 } & \multicolumn{1}{r}{1992.7 } &      & \multicolumn{1}{r}{1165.9 } & \multicolumn{1}{r}{10.8 } &      & \multicolumn{1}{r}{5040 } & \multicolumn{1}{r}{35 } \\
		4-6  &      & 6    & 30   &      & 2    & 15   &      & \multicolumn{1}{r}{3456.7 } & \multicolumn{1}{r}{2480.7 } &      & \multicolumn{1}{r}{730.8 } & \multicolumn{1}{r}{16.6 } &      & \multicolumn{1}{r}{2262 } & \multicolumn{1}{r}{49 } \\
		4-7  &      & 1    & 14   &      & 0    & 7    &      & \multicolumn{1}{r}{3600.0 } & \multicolumn{1}{r}{3155.2 } &      & -    & -    &      & -    & - \\
		5-4  &      & 8    & 22   &      & 3    & 11   &      & \multicolumn{1}{r}{3338.7 } & \multicolumn{1}{r}{2697.3 } &      & \multicolumn{1}{r}{114.2 } & \multicolumn{1}{r}{4.4 } &      & \multicolumn{1}{r}{919 } & \multicolumn{1}{r}{0 } \\
		5-5  &      & 0    & 8    &      & 0    & 2    &      & \multicolumn{1}{r}{3600.0 } & \multicolumn{1}{r}{3448.0 } &      & -    & -    &      & -    & - \\
		5-6  &      & 0    & 0    &      & 0    & 0    &      & \multicolumn{1}{r}{3600.0 } & \multicolumn{1}{r}{3600.0 } &      & -    & -    &      & -    & - \\
		sum  &      & 258  & 388  &      & 199  & 326  &      & -    & -    &      & -    & -    &      & -    & - \bigstrut[b]\\
		\hline
	\end{tabular}%
\end{table}

In Table \ref{table: results_compare-all-formulations-WH} we benchmark IS algorithms with respect to BRP-m2 and BRP-m3.
On difficult instances that BRP-m2 {\color{black}performs} very poorly, e.g., those in group {\color{black}3-8, 4-5, $\cdots$, 5-6}, the enhanced IS* has a {\color{black}clear} advantage.
Overall, it is able to solve {\color{black}73\%} more instances to optimality over BRP-m2.
It is worth noting that although the basic IS performs {\color{black}slightly poorer} than  BRP-m3, the enhanced IS* is {\color{black}much} better than BRP-m3.
Hence, it verifies the benefits of including enhancement techniques on improving our solution capability.
{\color{black}Specifically, for the 356 instances solved to optimality ignoring height limits, just 52 of them require reparation and 13 of them are successfully repaired.
Again, just a couple of iterations are needed for the IS algorithms.}

\begin{table}[!t]
	\centering
	\caption{Comparison of All Four Methods With Height Limits}
	\label{table: results_compare-all-formulations-WH}%
	\renewcommand\tabcolsep{2.7pt}	
	\begin{tabular}{crrrrrrccccccc}
		\hline
		\multirow{2}[4]{*}{Case} &      & \multicolumn{4}{c}{\#Optimal}  &      & \multicolumn{4}{c}{Time(s)} &      & \multicolumn{2}{c}{{\color{black}\#Iters}} \bigstrut\\
		\cline{3-6}\cline{8-11}\cline{13-14}         &      & \multicolumn{1}{c}{m2} & \multicolumn{1}{c}{m3} & \multicolumn{1}{c}{IS} & \multicolumn{1}{c}{IS*} &      & m2   & m3   & IS   & IS*  &      & IS   & IS* \bigstrut\\
		\hline
		3-3  &      & 40   & 40   & 40   & 40   &      & \multicolumn{1}{r}{1.2 } & \multicolumn{1}{r}{0.2 } & \multicolumn{1}{r}{0.4 } & \multicolumn{1}{r}{0.1 } &      & \multicolumn{1}{r}{1.2 } & \multicolumn{1}{r}{0.3 } \bigstrut[t]\\
		3-4  &      & 40   & 40   & 40   & 40   &      & \multicolumn{1}{r}{64.8 } & \multicolumn{1}{r}{3.5 } & \multicolumn{1}{r}{6.0 } & \multicolumn{1}{r}{1.6 } &      & \multicolumn{1}{r}{1.2 } & \multicolumn{1}{r}{0.5 } \\
		3-5  &      & 38   & 40   & 40   & 40   &      & \multicolumn{1}{r}{455.2 } & \multicolumn{1}{r}{46.5 } & \multicolumn{1}{r}{58.1 } & \multicolumn{1}{r}{32.2 } &      & \multicolumn{1}{r}{1.2 } & \multicolumn{1}{r}{0.3 } \\
		3-6  &      & 25   & 39   & 39   & 39   &      & \multicolumn{1}{r}{1755.2 } & \multicolumn{1}{r}{162.6 } & \multicolumn{1}{r}{172.2 } & \multicolumn{1}{r}{190.5 } &      & \multicolumn{1}{r}{1.1 } & \multicolumn{1}{r}{0.3 } \\
		3-7  &      & 17   & 39   & 37   & 39   &      & \multicolumn{1}{r}{2482.6 } & \multicolumn{1}{r}{212.6 } & \multicolumn{1}{r}{513.4 } & \multicolumn{1}{r}{212.3 } &      & \multicolumn{1}{r}{1.1 } & \multicolumn{1}{r}{0.3 } \\
		3-8  &      & 3    & 35   & 34   & 35   &      & \multicolumn{1}{r}{3562.3 } & \multicolumn{1}{r}{718.3 } & \multicolumn{1}{r}{748.6 } & \multicolumn{1}{r}{514.7 } &      & \multicolumn{1}{r}{1.0 } & \multicolumn{1}{r}{0.4 } \\
		4-4  &      & 25   & 37   & 34   & 36   &      & \multicolumn{1}{r}{1762.0 } & \multicolumn{1}{r}{515.7 } & \multicolumn{1}{r}{732.7 } & \multicolumn{1}{r}{425.7 } &      & \multicolumn{1}{r}{1.4 } & \multicolumn{1}{r}{1.0 } \\
		4-5  &      & 6    & 21   & 22   & 27   &      & \multicolumn{1}{r}{3235.0 } & \multicolumn{1}{r}{1992.7 } & \multicolumn{1}{r}{1889.7 } & \multicolumn{1}{r}{1573.4 } &      & \multicolumn{1}{r}{1.0 } & \multicolumn{1}{r}{0.8 } \\
		4-6  &      & 2    & 15   & 15   & 19   &      & \multicolumn{1}{r}{3456.7 } & \multicolumn{1}{r}{2480.7 } & \multicolumn{1}{r}{2426.6 } & \multicolumn{1}{r}{2029.5 } &      & \multicolumn{1}{r}{1.2 } & \multicolumn{1}{r}{0.5 } \\
		4-7  &      & 0    & 7    & 4    & 12   &      & \multicolumn{1}{r}{3600.0 } & \multicolumn{1}{r}{3155.2 } & \multicolumn{1}{r}{3319.6 } & \multicolumn{1}{r}{2652.7 } &      & \multicolumn{1}{r}{1.0 } & \multicolumn{1}{r}{0.4 } \\
		5-4  &      & 3    & 11   & 9    & 13   &      & \multicolumn{1}{r}{3338.7 } & \multicolumn{1}{r}{2697.3 } & \multicolumn{1}{r}{2871.1 } & \multicolumn{1}{r}{2565.3 } &      & \multicolumn{1}{r}{1.6 } & \multicolumn{1}{r}{1.2 } \\
		5-5  &      & 0    & 2    & 2    & 3    &      & \multicolumn{1}{r}{3600.0 } & \multicolumn{1}{r}{3448.0 } & \multicolumn{1}{r}{3454.3 } & \multicolumn{1}{r}{3330.1 } &      & \multicolumn{1}{r}{1.0 } & \multicolumn{1}{r}{0.0 } \\
		5-6  &      & 0    & 0    & 0    & 1    &      & \multicolumn{1}{r}{3600.0 } & \multicolumn{1}{r}{3600.0 } & \multicolumn{1}{r}{3600.0 } & \multicolumn{1}{r}{3552.8 } &      & -    & \multicolumn{1}{r}{1.0 } \\
		sum  &      & 199  & 326  & 316  & 344  &      & -    & -    & -    & -    &      & -    & - \bigstrut[b]\\
		\hline
	\end{tabular}%
\end{table}%

\subsection{Analysis on Influence of the Height Limit}
In this subsection, we analyze the influence of {\color{black}the height limit} on the performances of different computational methods.
The numbers of instances solved to optimality are shown in Figure \ref{figure: comparison_of_height_limit}.
From this figure, we note that the instances without {\color{black}a height limit} are actually more challenging to {\color{black}solve} for BRP-m2.
One explanation is that more binary variables have to be introduced to describe the possible stack height during the retrieval process, resulting in a significantly increased dimensionality.
As our modeling approach only needs variables describing the relationship between each pair of blocks, it naturally avoids that issue.
Another observation is that although the height limit has a non-trivial impact on the basic IS algorithm, enhancement techniques can actually largely reduce that impact, rendering the enhanced IS algorithm the most robust solution method.

\begin{figure}[!t]
	\begin{center}
		\includegraphics[scale = 0.9]{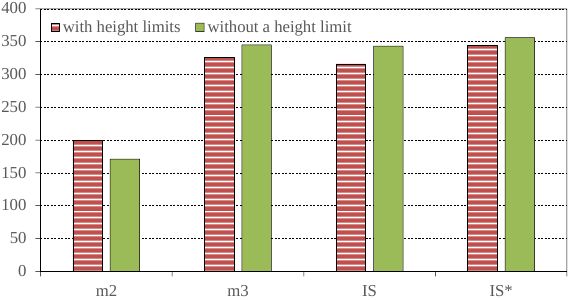}
		\caption{Number of instances solved to optimality}
		\label{figure: comparison_of_height_limit}
	\end{center}
\end{figure}

{\color{black}
\subsection{Computational Results of Customized MIP Formulations}

In this subsection, we present and analyze the performances of our customized formulations with additional industrial considerations on modified instances.

Specifically, on the basic instances presented above, we set the penalty coefficients $d_i$ as follows to generate testing instances.
\begin{align}
d_i = \begin{cases}
1,\ \ \text{if $b \leq \lceil 1 / 2 B \rceil$}\\
2,\ \ \text{otherwise.}
\end{cases}
\notag
\end{align}
To generate instances considering energy cost,
we set the cost coefficients $c_i = 3$ for all $i \in \mathbb{B}$,  the maximum lift-up height $H' = H + 1$,
and the weight parameter $\alpha=0.7$.  Also,
for both BRP-m3-PC and BRP-m3-EC, parameter $T$ is set to a value by rounding up 133\% of that in BRP-m3.

Also, to generate instances with stacking restrictions, we set $\mathbb{B}_i^{\times}$, i.e., the set of blocks upon which block $i$ is not allowed to pile, as follows.
\begin{align}
\mathbb{B}_i^{\times} = \begin{cases}
\text{blocks above block $i$}, & \text{if block $i$ is in stack 1 in $\mathbb{C}$}\\
\emptyset, & \text{otherwise.}
\end{cases}
\notag
\end{align}
Finally, to generate instances considering retrieval pace, we set $T_i^{\text{max}}$, i.e., the maximum number of relocations allowed before the retrieval of block $i$, as follows.
\begin{align}
T_i^{\text{max}} = \begin{cases}
h(i) + 1, & \text{if $b \leq \lceil 1 / 2 B \rceil$}\\
T, & \text{otherwise}
\notag\\
\end{cases}
\end{align}
where $h(i)$ returns the relocation turn in which block $i$ is retrieved using a simple heuristic.

\begin{table*}[!t]
	\centering
	\caption{{\color{black}Results of Customized Formulations Without a Height Limit}}
	\label{table: results_m3-IC-NH}
	\renewcommand\tabcolsep{4pt}
	\begin{tabular}{crrrrrrrrrrrrrcccccrccccc}
		\hline
		\multirow{2}[4]{*}{Case} &      & \multicolumn{5}{c}{\#Feasible}   &      & \multicolumn{5}{c}{\#Optimal}    &      & \multicolumn{5}{c}{Time(s)}      &      & \multicolumn{5}{c}{\#Nodes*} \bigstrut\\
		\cline{3-7}\cline{9-13}\cline{15-19}\cline{21-25}         &      & \multicolumn{1}{c}{BA} & \multicolumn{1}{c}{PC} & \multicolumn{1}{c}{EC} & \multicolumn{1}{c}{SR} & \multicolumn{1}{c}{RP} &      & \multicolumn{1}{c}{BA} & \multicolumn{1}{c}{PC} & \multicolumn{1}{c}{EC} & \multicolumn{1}{c}{SR} & \multicolumn{1}{c}{RP} &      & BA   & PC   & EC   & SR   & RP   &      & BA   & PC   & EC   & SR   & RP \bigstrut\\
		\hline
		3-4  &      & 40   & 40   & 40   & 40   & 40   &      & 40   & 40   & 38   & 40   & 40   &      & \multicolumn{1}{r}{1.4 } & \multicolumn{1}{r}{6.3 } & \multicolumn{1}{r}{231.0 } & \multicolumn{1}{r}{1.6 } & \multicolumn{1}{r}{0.3 } &      & \multicolumn{1}{r}{41 } & \multicolumn{1}{r}{385 } & \multicolumn{1}{r}{5923 } & \multicolumn{1}{r}{59 } & \multicolumn{1}{r}{0 } \bigstrut[t]\\
		4-4  &      & 40   & 40   & 40   & 40   & 40   &      & 39   & 36   & 15   & 39   & 40   &      & \multicolumn{1}{r}{328.9 } & \multicolumn{1}{r}{727.8 } & \multicolumn{1}{r}{2355.6 } & \multicolumn{1}{r}{267.4 } & \multicolumn{1}{r}{1.7 } &      & \multicolumn{1}{r}{26 } & \multicolumn{1}{r}{134 } & \multicolumn{1}{r}{7791 } & \multicolumn{1}{r}{56 } & \multicolumn{1}{r}{0 } \\
		5-4  &      & 31   & 23   & 24   & 28   & 40   &      & 14   & 10   & 2    & 12   & 40   &      & \multicolumn{1}{r}{2541.9 } & \multicolumn{1}{r}{2878.1 } & \multicolumn{1}{r}{3427.8 } & \multicolumn{1}{r}{2698.9 } & \multicolumn{1}{r}{29.2 } &      & \multicolumn{1}{r}{0 } & \multicolumn{1}{r}{150 } & \multicolumn{1}{r}{3647 } & \multicolumn{1}{r}{0 } & \multicolumn{1}{r}{0 } \\
		sum  &      & 111  & 103  & 104  & 108  & 120  &      & 93   & 86   & 55   & 91   & 120  &      & -    & -    & -    & -    & -    &      & -    & -    & -    & -    & - \bigstrut[b]\\
		\hline
	\end{tabular}%
\end{table*}%

\begin{table*}[!t]
	\centering
	\caption{{\color{black}Results of Customized Formulations With Height Limits}}
	\label{table: results_m3-IC-WH}
	\renewcommand\tabcolsep{4pt}
	\begin{tabular}{crrrrrrrrrrrrrcccccrccccc}
		\hline
		\multirow{2}[4]{*}{Case} &      & \multicolumn{5}{c}{\#Feasible}   &      & \multicolumn{5}{c}{\#Optimal}    &      & \multicolumn{5}{c}{Time(s)}      &      & \multicolumn{5}{c}{\#Nodes*} \bigstrut\\
		\cline{3-7}\cline{9-13}\cline{15-19}\cline{21-25}         &      & \multicolumn{1}{c}{BA} & \multicolumn{1}{c}{PC} & \multicolumn{1}{c}{EC} & \multicolumn{1}{c}{SR} & \multicolumn{1}{c}{RP} &      & \multicolumn{1}{c}{BA} & \multicolumn{1}{c}{PC} & \multicolumn{1}{c}{EC} & \multicolumn{1}{c}{SR} & \multicolumn{1}{c}{RP} &      & BA   & PC   & EC   & SR   & RP   &      & BA   & PC   & EC   & SR   & RP \bigstrut\\
		\hline
		3-4  &      & 40   & 40   & 40   & 40   & 40   &      & 40   & 40   & 40   & 40   & 40   &      & \multicolumn{1}{r}{3.5 } & \multicolumn{1}{r}{8.2 } & \multicolumn{1}{r}{140.1 } & \multicolumn{1}{r}{2.6 } & \multicolumn{1}{r}{0.4 } &      & \multicolumn{1}{r}{175 } & \multicolumn{1}{r}{348 } & \multicolumn{1}{r}{4745 } & \multicolumn{1}{r}{172 } & \multicolumn{1}{r}{1 } \bigstrut[t]\\
		4-4  &      & 40   & 40   & 39   & 40   & 40   &      & 37   & 33   & 22   & 37   & 40   &      & \multicolumn{1}{r}{515.7 } & \multicolumn{1}{r}{1115.1 } & \multicolumn{1}{r}{2139.5 } & \multicolumn{1}{r}{606.2 } & \multicolumn{1}{r}{3.3 } &      & \multicolumn{1}{r}{230 } & \multicolumn{1}{r}{561 } & \multicolumn{1}{r}{9227 } & \multicolumn{1}{r}{339 } & \multicolumn{1}{r}{1 } \\
		5-4  &      & 22   & 17   & 16   & 19   & 40   &      & 11   & 7    & 3    & 9    & 38   &      & \multicolumn{1}{r}{2697.3 } & \multicolumn{1}{r}{3100.5 } & \multicolumn{1}{r}{3353.8 } & \multicolumn{1}{r}{2844.4 } & \multicolumn{1}{r}{279.9 } &      & \multicolumn{1}{r}{0 } & \multicolumn{1}{r}{44 } & \multicolumn{1}{r}{1183 } & \multicolumn{1}{r}{0 } & \multicolumn{1}{r}{0 } \\
		sum  &      & 102  & 97   & 95   & 99   & 120  &      & 88   & 80   & 65   & 86   & 118  &      & -    & -    & -    & -    & -    &      & -    & -    & -    & -    & - \bigstrut[b]\\
		\hline
	\end{tabular}%
\end{table*}%

To focus on evaluating our MIP formulations under four different industrial considerations, we select 3 (out of 13) groups of instances to perform our computational studies, i.e., 120 instances in total from group 3-4, 4-4 and 5-4.
Results of four BRP-m3 MIP formulations (simply denoted by PC, EC, SR, RP, respectively), together with the basic MIP formulation (denoted by BA) on instances with corresponding modifications (with BA being tested on the basic instances) are shown in Tables \ref{table: results_m3-IC-NH} and \ref{table: results_m3-IC-WH}.
Note that column ``\#Nodes*'' presents the average number of B\&B nodes over instances solved to optimality by all the five formulations, which are actually the instances solved to optimality by EC.

Based on results in Tables \ref{table: results_m3-IC-NH} and \ref{table: results_m3-IC-WH}, it can be seen that our BRP-m3 is a flexible and effective basic model to build on.
For four extensions with practical considerations, except the one with energy cost, their performances are generally comparable to or better than that of the basic BRP-m3.
Although BRP-m3-PC has a different objective function and BRP-m3-SR and BRP-m3-RP have some new constraints, adding those complexities to BRP-m3 does not lead to a substantial degradation in its solution capability.
Actually, BRP-m3-RP, i.e., BRP-m3 with retrieval pace constraints, could perform orders-of-magnitude faster than that of the standard BRP-m3.
Hence, comparing to the specialized algorithms whose developments are generally challenging and demanding, BRP-m3, together with an MIP professional solver, is a user-friendly and effective platform to address more involved requirements arising from the practice with a rather stable performance.

Moreover, the superior performance demonstrated by BRP-m3-RP is worth a further investigation.
Comparing its optimal values to those of BRP-m3, we observe that they are very close.
As shown in Table \ref{table: results_m3-RP}, on 72\% of total 238 instances, BRP-m3-RP produces optimal values that are same as those of BRP-m3.
Even when they are different, at most 2 to 3 more relocations are involved.
Given BRP-m3-RP's drastically better computational speed, it would be beneficial to study how to use it to
exactly or approximately solve the basic BRP-m3 and its extensions.
Another observation is that the equalities reflecting the retrieval pace considerations, which are actually generalized upper bound (GUB) constraints \cite{Conforti-2014-Book}, play a critical role in generating strong cutting planes and in reducing the size of B\&B tree for a professional MIP solver.
Therefore, it inspires us to explore the BRP and derive similar constraints.
Specifically, we should derive bounds on the earliest and latest possible relocation turns in the moving sequence between which a block can be retrieved, and supply the related GUB constraints to BRP-m3 for fast computation.

\begin{table}[htbp]
	\centering
	\caption{{\color{black}Results of BRP-m3-RP Compared With BRP-m3}}
	\label{table: results_m3-RP}
	\renewcommand\tabcolsep{1.8pt}
	\begin{tabular}{crrrcrrrc}
		\hline
		\multirow{2}[4]{*}{Case} &      & \multicolumn{3}{c}{without a height limit} &      & \multicolumn{3}{c}{with height limits} \bigstrut\\
		\cline{3-5}\cline{7-9}         &      & \multicolumn{1}{c}{\#Optimal} & \multicolumn{1}{c}{\#RP=BA} & max(RP-BA) &      & \multicolumn{1}{c}{\#Optimal} & \multicolumn{1}{c}{\#RP=BA} & max(RP-BA) \bigstrut\\
		\hline
		3-4  &      & 40   & 36   & \multicolumn{1}{r}{2} &      & 40   & 36   & \multicolumn{1}{r}{2} \bigstrut[t]\\
		4-4  &      & 40   & 25   & \multicolumn{1}{r}{2} &      & 40   & 28   & \multicolumn{1}{r}{2} \\
		5-4  &      & 40   & 24   & \multicolumn{1}{r}{2} &      & 38   & 23   & \multicolumn{1}{r}{3} \\
		sum  &      & 120  & 85   & -    &      & 118  & 87   & - \bigstrut[b]\\
		\hline
	\end{tabular}%
\end{table}%

Finally, we would like to mention the new challenge from considering the energy cost in the BRP.
As shown in Section \ref{section_customizations},  in addition to simply counting the number relocations across all blocks, we must track every relocation movement associated with a particular block in the retrieval process.
Such a consideration renders BRP-m3-EC with a structure that is very much different from those of other BRP-m3 extensions and very hard to compute.
Indeed, note from Tables \ref{table: results_m3-IC-NH} and \ref{table: results_m3-IC-WH}, imposing a height limit is helpful to achieve a better computational performance in BRP-m3-EC, contrary to our previous understanding developed in all other experiments.
Given that the energy cost is a common concern among practitioners, it would be desired to carry out polyhedral and cutting plane studies to strengthen BRP-m3-EC for a better performance.
Alternatively, specialized algorithms, e.g., B\&B algorithms, could be developed to complement the current algorithm study on the BRPs.
}


\section{Conclusion}
\label{section_conclusion}

In this paper, we study the unrestricted BRP with distinct retrieval priorities, the complete retrieval and individual moves.
Our results include a general framework to derive strong lower bounds on the number of necessary relocations,
a set of demonstrations with respect to existing lower bounds, and a new but stronger one.
Moreover, we develop two exact computational methods: a new MIP formulation for the BRP, and a novel MIP formulation based iterative procedure.
{\color{black}
The MIP formulation is further customized into four extensions, each of which addresses a particular industrial consideration.
}
Our computational results show that the newly proposed lower bound greatly outperforms all existing ones in the literature, and is often less than the optimal value by just a couple of relocations.
Also, comparing to a recently published state-of-the-art formulation,  our two new computational methods demonstrate superior performances, especially on instances without a height limit, where our methods could be multi-order magnitude faster.
{\color{black}
Moreover, the customized MIP formulations display a stable performance in computing most of BRP instances, rather insensitive to additional complexities from different
industrial considerations.}

{\color{black}
Future research directions include identifying non-trivial subsets and their properties, and
continuing the tradition to derive stronger lower bounds under the presented general framework.
Also, as a new type of structural insights,
it is of a great interest to derive bounds on the earliest and latest possible relocation turns in the moving sequence between which a block can be retrieved.
Naturally, those bounds can be supplied to develop fast B\&B algorithms.
Regarding the new MIP formulations, one direction is to perform polyhedral studies to gain deep theoretical understandings and  to achieve computational improvements, especially for the formulation
considering blocks' movements and energy cost.
Another direction is to extend the presented formulations and the iterative procedure to solve other BRP variants with more practical considerations.
}


%

\appendices

{\color{black}
\section{Algorithms A5 and A5* for a Block Subset Satisfying Property 5}
\label{Appendix_Alg_P5}}

{\color{black}
To identify virtual layers satisfying P5, Algorithm A5 proceeds by first evaluating blocks in the top physical layer.
If they satisfy P5, we remove this layer from $\mathbb{C}$, and check the emerging top physical layer.
If a block in a physical layer causes it to violate P5, we replace the block with the one directly below it to construct an actual virtual layer.
We repeat the last step in the virtual layer with respect to P5, until either one virtual layer satisfying P5 is derived or no more virtual layer can be constructed.
Details of A5 are listed below. }

\noindent\rule[0.05\baselineskip]{3.5in}{0.5pt}
Algorithm A5: Identify a Block Subset Satisfying P5\\
\noindent\rule[0.45\baselineskip]{3.5in}{0.5pt}
\begin{algorithmic}[1]
	%
	\State {$\mathbb{B}^5 \leftarrow {\color{black}\text{the top layer of the current $\mathbb{C}$ }}$;
		$found \leftarrow $ false}
	\State{\textbf{wihle} $\mathbb{B}^5 \neq \emptyset$ and $found =$ false}
	\State {\hspace{0.5cm}$found \leftarrow$ true}
	\State {\hspace{0.5cm}\textbf{for} {\color{black} each block $i$ in $\mathbb{B}^5$}}
	\State {\hspace{1cm}\textbf{if} {\color{black}block $i$ causes $\mathbb{B}^5$ to violate P5}}
	\State {\hspace{1.5cm}\textbf{if} {\color{black}block $i$ is not on the floor}}
	\State {\hspace{2cm}update $\mathbb{B}^5$ by replacing $i$ with the block\\ \hspace{2cm}{underneath it}}
	\State {\hspace{1.5cm}\textbf{{\color{black}else}}}
	\State {\hspace{2cm}{\color{black}$\mathbb{B}^5 \leftarrow \emptyset$}}
	\State {\hspace{1.5cm}$found \leftarrow$ false; \textbf{break}}
	\State {\textbf{return} $\mathbb{B}^5$}
\end{algorithmic}
\noindent\rule[0.8\baselineskip]{3.5in}{0.5pt}

{\color{black}
Note that a WP block in a virtual layer definitely causes it to violate P5 if this block's  priority is higher than (or not lower than if duplicate priorities exist) the highest priority of blocks below the virtual layer.
Similarly, a BP block causes this layer to violate P5 if its priority is higher than (or not lower than if duplicate priorities exist) the lowest priority of all stacks after removing blocks above the virtual layer.}

The time complexity of A5 is $\mathcal{O}(BS)$. We reason it as follows.
{\color{black}
(\romannumeral1) Blocks are replaced by their underneath blocks at most $B$ times,
and at most $S$ blocks are checked during each replacement,
therefore blocks are evaluated at most $BS$ times
with a time complexity of $\mathcal{O}(1)$ each time.}
(\romannumeral2) If a block is replaced by its underneath block, we update the highest priority of blocks below the new virtual layer and the lowest priority of stacks after removing blocks above the new virtual layer.
(\romannumeral3) The two updates can be finished with $S - 1$ and $1$ comparison operations respectively,
if the highest priority of blocks below the underneath block is computed beforehand.
(\romannumeral4) The highest priority of blocks below each block can be computed as preprocessed data with a time complexity of $\mathcal{O}(B)$.
Following the calculation {\color{black}$BS + B (S - 1 + 1) + B = 2BS + B$},
we conclude the overall time complexity as $\mathcal{O}(BS)$.

{\color{black}
It can be easily seen that A5 tends to construct a virtual layer with blocks in the top physical layers. To consider other blocks in $\mathbb{C}$, we modify A5 to A5* that could lead to a different virtual layer and then a stronger lower bound. The basic idea is that, given a virtual layer constructed by the original A5,
we seek to update it with blocks piled at the lowest possible positions while ensuring its eligibility. The pseudo code of A5* is omitted here for its simplicity.
Its time complexity is the same as that of A5, i.e., $\mathcal{O}(BS)$.
}

\section{Algorithm A7 for a Block Subset Satisfying Property 7}
\label{Appendix_Alg_P7}

Given a WP block $i$ piled in a stack $s$,
we run Algorithm  A7 to identify a subset satisfying P7 in the following three steps.
(\romannumeral1) Pick $S - 1$ blocks in a way such that they are from distinct stacks in the other $S - 1$ stacks.
Together with block $i$, we form a virtual layer satisfying P5.
(\romannumeral2) Repeat step $(i)$ to form another virtual layer satisfying P5.
(\romannumeral3) Check whether a GB move of block $i$ is ensured.
The pseudo code is omitted here due to its similarity to that of Algorithm A5.
Also, the time complexity of A7 is the same as that of A5, i.e., $\mathcal{O}(BS)$.

{\color{black}
\section{Algorithm A8 for a Block Subset Satisfying Property 8}
\label{Appendix_Alg_P8}

Given a block $i$ piled in a stack $s$, we run Algorithm A8 to identify a subset $\mathbb{B}^8$ satisfying P8 in the following two steps.
Note that all the following operations are conducted on a copy of the initial configuration $\mathbb{C}$, which is denoted by $\mathbb{C}'$.

\textit{(1) Initialization.}
First pick the blocks above block $i$ that have priorities lower than that of $i$ to form subset $\mathbb{B}^{8,1}$.
Then pick a block with the highest priority from each of the other $S-1$ stacks, if not empty, to form subset $\mathbb{B}^{8,2}$.
Finally, set $\mathbb{B}^8 = \mathbb{B}^{8,1} \cup \mathbb{B}^{8,2}$.

\textit{(2) Verification.}
First remove blocks of $\mathbb{B} \backslash \mathbb{B}^8$ from $\mathbb{C}'$,
sort the $S-1$ stacks in a non-decreasing order of their priorities.
Then successively relocate blocks of $\mathbb{B}^{8,1}$ (in stack $s$) once, without considering the stack height limit, to convert them to be WP.
If a block can be relocated to be WP in multiple stacks, relocate it to a stack with the highest priority \cite{Tanaka-2018-COR}.
If it cannot be relocated to be WP, stop and conclude $\mathbb{B}^8$ satisfies P8.

Details of Algorithm A8 are shown as follows.

\noindent\rule[0.05\baselineskip]{3.5in}{0.5pt}
Algorithm A8: Identify a Block Subset Satisfying P8\\
\noindent\rule[0.45\baselineskip]{3.5in}{0.5pt}
\begin{algorithmic}[1]	
	\State {$\mathbb{B}^8 \leftarrow \mathbb{B}^{8,1} \leftarrow \mathbb{B}^{8,2} \leftarrow \emptyset;  \mathbb{C}' \leftarrow  \mathbb{C}$}
	\State {$\mathbb{B}^{8,1} \leftarrow$ blocks above block $i$ and with lower priorities}
	\State {$\mathbb{B}^{8,2} \leftarrow$ a block with the highest priority from each stack of set $\mathbb{S} \backslash \{s\}$}
	\State {remove blocks of $\mathbb{B} \backslash \{\mathbb{B}^{8,1} \cup \mathbb{B}^{8,2}\}$ from $\mathbb{C}'$}
	\State {sort stacks in the non-decreasing order of their priorities}%
	\State {\textbf{for} each block $j$ in $\mathbb{B}^{8,1}$}
	\State {\hspace{0.5cm}\textbf{if} block $j$ can be relocated to be WP}
	\State {\hspace{1cm}relocate block $j$ to the last feasible stack}
	\State {\hspace{0.5cm}\textbf{else}}
	\State {\hspace{1cm}$\mathbb{B}^8 \leftarrow \mathbb{B}^{8,1} \cup \mathbb{B}^{8,2}$;
		\textbf{break}}
	\State {\textbf{return} $\mathbb{B}^8$}
\end{algorithmic}
\noindent\rule[0.8\baselineskip]{3.5in}{0.5pt}

The time complexity of A8 is reasoned as follows.
(\romannumeral1) The initialization is with a time complexity of $\mathcal{O}(B)$.
(\romannumeral2) The verification is similar to checking the condition of Property 4,
therefore is with a time complexity of $\mathcal{O}(|\mathbb{B}^{8,1}| \log{S})$ if stacks are sorted beforehand \cite{Tanaka-2018-COR}.
(\romannumeral3) The time complexity of sorting stacks can be $\mathcal{O}(S \log{S})$.
Following the calculation $B + |\mathbb{B}^{8,1}| \log{S} + S \log{S}$,
we conclude that the time complexity of A8 is less than $\mathcal{O}(B\log{S})$.

Actually, the slight modification of LB-N considering the stack height limit \cite{Tanaka-2018-COR},
which is omitted in Section \ref{section_LB} to minimize distractions,
can be embedded into Algorithm A8.
Then we have the following proposition.

\begin{propositionA-1}
\label{prop_A8-generalizes-LB-N}
\label{prop_A8}
If a block subset $\mathbb{B}^4$ satisfying P4 is obtained in the derivation of LB-N,
a block subset $\mathbb{B}^8$ satisfying P8 can be derived by applying Algorithm A8 to some block.
\end{propositionA-1}

\begin{nproof}
Recall that LB-N is an iterative procedure that might remove blocks from the initial configuration.
Without loss of generality, for the current configuration, assume block $i$, which is piled in stack $s$, as the current target block when $\mathbb{B}^4$ is derived.
Then $\mathbb{B}^4 = \mathbb{B}^{4,1} \cup \mathbb{B}^{4,2}$,
where $\mathbb{B}^{4,1}$ includes the blocks above block $i$ and with lower priorities,
and $\mathbb{B}^{4,2}$ includes a block with the highest priority from each of the other $S-1$ stacks,
both in the current configuration.

In the initial configuration, we apply A8 to  block $i$ and set $\mathbb{B}^8 = \mathbb{B}^{8,1} \cup \mathbb{B}^{8,2}$,
where $\mathbb{B}^{8,1}$ includes the blocks above block $i$ and with lower priorities,
and $\mathbb{B}^{8,2}$ includes a block with the highest priority from each of the other $S-1$ stacks.

Since the initial configuration subsumes the current configuration,
it can be inferred that $\mathbb{B}^{8,1} \supseteq \mathbb{B}^{4,1}$ and the priority of the block in stack $s'$ and $\mathbb{B}^{8,2}$ is not lower than that of the block in stack $s'$ and $\mathbb{B}^{4,2}$ for $s'\in \mathbb{S}\backslash\{s\}$.
Clearly, if we cannot relocate all blocks of $\mathbb{B}^{4,1}$ to be WP, neither can we do those of $\mathbb{B}^{8,1}$.

Given that  $\mathbb{B}^4$ satisfies P4, i.e., some block(s) in $\mathbb{B}^{4,1}$ cannot be relocated to be WP, it follows that some block(s) in $\mathbb{B}^{8,1}$ also cannot be relocated to be WP, i.e., the condition of Property 8 is satisfied.
To conclude, a $\mathbb{B}^8$ satisfying P8 can be derived by applying A8 to block $i$.
\end{nproof}

\ \\
\indent We observe that a subset $\mathbb{B}^8$ satisfying P8 might contain some redundant block(s) that can be removed without losing P8.
To provide more flexibility to our lower bound derivation framework,  we reduce a $\mathbb{B}^8$ satisfying P8 to contain exactly $S$ stacks while ensuring this property.

We first define a key concept: \textit{barrier block}.
Considering the verification step in A8,
a block $j$ of $\mathbb{B}^{8,1}$ is piled in stack $s$, and is to be relocated to one of the other $S-1$ stacks.
If a higher prioritized block $k$ is already piled in one of those stacks,
then block $j$ cannot be relocated to that stack to be WP.
Hence we call block $k$ a \textit{barrier block} of block $j$.
For a block $j$ of $\mathbb{B}^{8,2}$,
which will not be relocated in the verification step  of A8,
we define its \textit{barrier blocks} as those in $\mathbb{B}^{8,2}$ that are with higher or equal  priorities.
Then we can have the following proposition.

\begin{propositionA-2}
Given a block subset satisfying P8, we can always pick exactly $S$ blocks out of it to build a new $\mathbb{B}^8$   satisfying P8 by the following procedure.

(\romannumeral1) Pick a block from stack $s$ that cannot be relocated to be WP in the verification step of A8.

(\romannumeral2) Pick a barrier block of the above picked block from the first of the $S-1$ sorted stacks.

(\romannumeral3) Pick a barrier block of the previously picked block from the second of the $S-1$ sorted stacks,
and repeat the process until a block is picked from the last of the $S-1$ sorted stacks.
\end{propositionA-2}

\begin{nproof}
We first show that the above procedure is feasible, i.e., the $S$ picked blocks are available.
Recall the relocation rule in the verification step of A8, a block is relocated to the highest possible prioritized stack, i.e., the latest possible stack.
Therefore, for a block picked from one of the $S-1$ sorted stacks, there must exists a barrier block of it in the next stack.

We then show that latter picked blocks are actually the barrier ones of former picked blocks.
It is obvious, since the barrier relationship is transitive.
For example, given the second picked block is a barrier one of first picked block,
and the third picked block is a barrier one of the second picked block,
we have the third picked block is also a barrier one of first picked block.
Therefore, when relocating a block, all blocks piled in the $S-1$ sorted stacks are its barrier blocks.

At last, we verify the reduced $\mathbb{B}^8$.
Recall the definition of barrier block, a block cannot be relocated to a stack to be WP if one of its barrier blocks is already in that stack.
Then, in the verification of the reduced $\mathbb{B}^8$, a block can only be relocated to an empty stack to be WP, since all blocks piled in the $S-1$ sorted stacks are its barrier blocks.
When relocating the last block, each of the $S-1$ stacks is occupied by a barrier block of it, hence it cannot be relocated to be WP.
In conclusion, the reduced $\mathbb{B}^8$ satisfies P8, and Proposition A-2 is proved.
\end{nproof}

Details of the reduction procedure, referred to as A8-s, are shown below.
As the reduction can be made easily and helps us to derive a non-BG relocation with a smaller $\mathbb{B}^8$,
we adopt it as a default step in A8 when performing the lower bound derivation.

\noindent\rule[0.05\baselineskip]{3.5in}{0.5pt}
Algorithm A8-s: Reduce a Block Subset Satisfying P8\\
\noindent\rule[0.45\baselineskip]{3.5in}{0.5pt}
\begin{algorithmic}[1]	
	\State {$j \leftarrow$ a block in stack $s$ that cannot be relocated to be WP}
	\State {$\mathbb{B}^8 \leftarrow \{j\}$}
	\State {\textbf{for} each stack $s'$ in $\mathbb{S} \backslash \{s\}$}
	\State {\hspace{0.5cm}$k \leftarrow$ a barrier block of block $j$ at the lowest tier}
	\State {\hspace{0.5cm}$j \leftarrow k$; $\mathbb{B}^8 \leftarrow \mathbb{B}^8 \cup \{j\}$}
\end{algorithmic}
\noindent\rule[0.8\baselineskip]{3.5in}{0.5pt}

\section{Proof of Proposition 1}
\label{Appendix_Proof-Proposition1}
As for relaxing integer variables to continuous ones, it can be easily verified that  there exists one optimal solution such that those continuous variables take integer values, due to the integrality of related variables. So, we focus on detailed proofs of the other two statements.

\begin{nproof}
(b) As for eliminated constraints in (\ref{cons_y_up_3}), (\ref{cons_z_1}) and (\ref{cons_z_2}), they are actually implied or dominated by (\ref{cons_y_dn_2}), (\ref{cons_y_dn_3}), (\ref{cons_x_2}), (\ref{cons_x_3}) and (\ref{cons_y_dn_4}) as shown below.

First, if $\hat{y}_{ij}^t = 0$,  (\ref{cons_y_up_3}) trivially holds.
Otherwise, i.e., $\hat{y}_{ij}^t = 1$, we can derive $\check{y}_{ij}^t = 0$ from (\ref{cons_y_dn_2}) and (\ref{cons_y_dn_3}), and can further derive (\ref{cons_y_up_3}) from (\ref{cons_x_2}) and (\ref{cons_x_3}) as follows.
\begin{align}
& \hat{y}_{ij}^t = x_{ij}^{t\text{-}1} - x_{ij}^t \leq  x_{ij}^{t\text{-}1}
\hspace{1.2cm}
\forall i \in \mathbb{B};\
j \in \mathbb{B}',\ j < i;\
t \in \mathbb{T}
\notag\\
& \hat{y}_{ij}^t = x_{ij}^{t\text{-}1} - x_{ij}^t - z_{ij}^t \leq  x_{ij}^{t\text{-}1}
\hspace{0.35cm}
\forall i \in \mathbb{B};\
j \in \mathbb{B}',\ j > i;\
t \in \mathbb{T}
\notag
\end{align}
Hence, in either case,  (\ref{cons_y_up_3}) can be derived from (\ref{cons_y_dn_2}), (\ref{cons_y_dn_3}), (\ref{cons_x_2}) and (\ref{cons_x_3}).

Second, (\ref{cons_z_1}) can be derived from (\ref{cons_x_3}) as follows.
\begin{align}
& z_{ij}^t \mathrm{=} x_{ji}^{t\text{-}1} \text{--} \hat{y}_{ji}^t \mathrm{+} \check{y}_{ji}^t \text{--} x_{ji}^t
\leq x_{ji}^{t\text{-}1} \text{--} \hat{y}_{ji}^t \mathrm{+} \check{y}_{ji}^t
\
\forall i \mathrm{\in} \mathbb{B};
j \mathrm{\in} \mathbb{B}', j \mathrm{>} i;
t \mathrm{\in} \mathbb{T}
\notag
\end{align}

Third, we use the contradiction to show that (\ref{cons_z_2})  can be eliminated. Assuming that a constraint in (\ref{cons_z_2}) is violated,  there exists a block $i$, which is piled beneath a lower prioritized block $k$ after the $t^{th}$ relocation move (hence $x_{ki}^{t\text{-}1} - \hat{y}_{ki}^t + \check{y}_{ki}^t = 1$ and $\sum_{j \in \mathbb{B}' \backslash \{i\}} {(x_{ij}^{t\text{-}1} {\rm{-}} \hat{y}_{ij}^t {\rm{+}} \check{y}_{ij}^t)} = 1$),
that is retrieved during turn $t$ (hence $\sum_{j \in \mathbb{B}', j > i} {z_{ij}^t} = 1$).
Then we can infer the following two constraints from (\ref{cons_x_2}) and (\ref{cons_x_3}).
\begin{align}
\sum_{j \in \mathbb{B}' \backslash \{i\}} {x_{ij}^t} & {\rm{=}}
\sum_{j \in \mathbb{B}', j < i}{x_{ij}^t} {\rm{+}}
\sum_{j \in \mathbb{B}', j > i} {x_{ij}^t}
\notag\\
& {\rm{=}} \sum_{j \in \mathbb{B}' \backslash \{i\}} {(x_{ij}^{t\text{-}1} {\rm{-}} \hat{y}_{ij}^t {\rm{+}} \check{y}_{ij}^t)} {\rm{-}}
\sum_{j \in \mathbb{B}', j > i} {z_{ij}^t} {\rm{=}}
1 {\rm{-}} 1 {\rm{=}} 0
\notag\\
\sum_{j \in \mathbb{B} \backslash \{i\}} {x_{ji}^t} & {\rm{=}}
\sum_{j \in \mathbb{B} \backslash \{i, k\}} {x_{ji}^t} + x_{ki}^t
\ \ \ (\text{note that $k > i$})
\notag\\
& {\rm{=}} \sum_{j \in \mathbb{B} \backslash \{i, k\}} {x_{ji}^t} + (x_{ki}^{t\text{-}1} {\rm{-}} \hat{y}_{ki}^t {\rm{+}} \check{y}_{ki}^t)\geq
0 {\rm{+}} 1 {\rm{=}} 1
\notag
\end{align}
Considering (\ref{cons_y_dn_4}), we have the following constraint.
\begin{align}
\sum_{j \in \mathbb{B} \backslash \{i\}} {\check{y}_{ji}^{t\text{+}1}} {\rm{\leq}}
\sum_{j \in \mathbb{B}' \backslash \{i\}} {x_{ij}^t} {\rm{-}} \sum_{j \in \mathbb{B} \backslash \{i\}} {x_{ji}^t} {\rm{\leq}}
0 {\rm{-}} 1,
\notag
\end{align}
which causes the formulation infeasible.
Hence solutions satisfying (\ref{cons_x_2}), (\ref{cons_x_3}) and (\ref{cons_y_dn_4}) naturally satisfy (\ref{cons_z_2}), which indicates the latter one is not necessary.
\\

(c) As for the set of new constraints.
Note that it does not hurt to require that block 1 is retrieved as soon as possible in the retrieval process.
Such consideration is actually reflected in constraints (\ref{cons_e_1})-(\ref{cons_e_7}).
Specifically,  equalities in (\ref{cons_e_1}) ensure that block 1 is retrieved whenever the one directly piled upon it is relocated.
Obviously, that block is relocated from the top of block 1 exactly once, i.e., (\ref{cons_e_2}).
Since block 1 is retrieved, no other block can be relocated to the top of it, i.e., (\ref{cons_e_3}) and (\ref{cons_e_4}), and it cannot be relocated or retrieved, i.e., (\ref{cons_e_5}) and (\ref{cons_e_6}).

Let $\tau$ denote the turn in which block 1 is retrieved.
Clearly, we have block 1 remained in its initial position for $t\leq \tau$, which gives $u_1^t=h_1$ for $t\leq \tau$.
For $t > \tau$, since $x_{1j}^{t\text{-}1} = \hat{y}_{1j}^t =\check{y}_{1j}^t=0$ for all $j \rm{\neq} 1$,
constraints in (\ref{cons_u_1}), i.e., the only constraints restricting $u_i^t$, reduce to $u_1^t \geq u_j^t + 1 - H$ for all $j \neq 1$.
It is trivially true given the fact that its right-hand-side is less than or equal to $1$ and $u_1^t\geq 1$.
Hence, imposing (\ref{cons_e_7}) for $t\in \mathbb{T}$ will not eliminate any optimal move sequence.
\end{nproof}
}

\section{Enhanced Iterative Scheme}
\label{Appendix_Enhanced-Iterative-Scheme}

{\color{black}
Since the fast computation heuristics and  the reparation heuristics are rather simple, we present their main ideas and omit the detailed pseudo codes.
Following the myopic strategy, the two fast heuristics  determine the next move by minimizing the number of direct blockages or the value of LB$_4$ of the resulting configuration for each turn.
On the other hand, the reparation heuristic seeks to fix a solution violating the height limit by repairing some relocation moves.
A conservative strategy is used first:
if a relocation move leads to a block over the height limit,
we change its destination to a stack of a lower height,
while ensuring that all the following moves of the whole block set remain applicable.
If the strategy fails at a turn, switch to a more aggressive strategy: to
generate new moves for the failed turn and all the following  turns by one of the fast heuristics.
}

\noindent\rule[0.05\baselineskip]{3.5in}{0.5pt}
Algorithm IS*: Enhanced Iterative Scheme\\
\noindent\rule[0.45\baselineskip]{3.5in}{0.5pt}

\begin{algorithmic}[1]
\State {$L \leftarrow 0$, $L' \leftarrow \text{a lower bound of BRP-m3}$}
\State {\textbf{while} $L < L'$}
\State {\hspace{0.5cm}$L \leftarrow L'$}
\State {\hspace{0.5cm}generate initial solutions by two fast heuristics}
{\color{black}
\State {\hspace{0.5cm}\textbf{if} {objective value of the best initial solution $sln1$ is $L$}}
\State {\hspace{1cm}\textbf{break}}
}
\State {\hspace{0.5cm}update and compute BRP-m3R without a height limit}
\State {\hspace{0.5cm}$(sln1,\ L') \leftarrow \text{the optimal solution and objective value}$}
\State {\textbf{if} $sln1$ satisfies the height limit}
\State {\hspace{0.5cm}\Return $sln1$}
\State {repair $sln1$ and get $sln2$}
\State {\textbf{if} the objective value of $sln2$ equals $L$}
\State {\hspace{0.5cm}\Return $sln2$}
\State {$L \leftarrow 0$}
\State {\textbf{while} $L < L'$}
\State {\hspace{0.5cm}$L \leftarrow L'$}
\State {\hspace{0.5cm}generate initial solutions by two fast heuristics}
{\color{black}
\State {\hspace{0.5cm}\textbf{if} objective value of the best initial solution $sln3$ is $L$}
\State {\hspace{1cm}\textbf{break}}
}
\State {\hspace{0.5cm}update and compute BRP-m3R with a height limit}
\State {\hspace{0.5cm}$(sln3,\ L') \leftarrow \text{the optimal solution and objective value}$}
\State {\Return {$sln3$}}

\end{algorithmic}
\noindent\rule[0.8\baselineskip]{3.5in}{0.5pt}

\section*{Acknowledgment}
{\color{black}
The authors thank the associate editor and the reviewers for their constructive comments that prompt us to discuss the applicability and flexibility of our methods, and improve the clarity of some details.
}
The authors thank Shunji Tanaka for providing instances, codes and games on his website (\url{https://sites.google.com/site/shunjitanaka/brp}) that are very helpful to understand the BRP.
The authors also thank Ting Li for her constructive suggestions on this research topic.

\ifCLASSOPTIONcaptionsoff
\newpage
\fi

\end{document}